\documentclass[a4paper,10pt,reqno]{amsart}
\usepackage[utf8]{inputenc}
\usepackage{amsmath}
\usepackage{cases}
\usepackage{amsfonts}
\usepackage[colorlinks,linkcolor=blue,citecolor=blue]{hyperref}
\usepackage{latexsym, amssymb, amsmath, amsthm, bbm}
\usepackage[all]{xy}
\usepackage{pgfplots}
\usepackage{enumerate}

\DeclareSymbolFont{EulerExtension}{U}{euex}{m}{n}
\DeclareMathSymbol{\euintop}{\mathop} {EulerExtension}{"52}
\DeclareMathSymbol{\euointop}{\mathop} {EulerExtension}{"48}

\allowdisplaybreaks[4]

\setlength{\textwidth}{5.6truein}
\setlength{\textheight}{8.2truein}
\setlength{\topmargin}{-0.13truein}
\addtolength{\parskip}{5pt}

\def \id{\operatorname{id}}

\def \C{\mathcal{C}}
\def \k{\Bbbk}

\def \dim{\operatorname{dim}}

\def \C{\mathcal{C}}

\def \span{\operatorname{span}}
\def \A{\mathcal{A}}
\def \B{\mathcal{B}}
\def \C{\mathcal{C}}

\def \E{\mathcal{E}}

\def \G{\mathcal{G}}

\usepackage{mathtools}

\numberwithin{equation}{section}

\newtheorem{theorem}{Theorem}[section]
\newtheorem{lemma}[theorem]{Lemma}
\newtheorem{proposition}[theorem]{Proposition}
\newtheorem{corollary}[theorem]{Corollary}
\newtheorem{definition}[theorem]{Definition}
\newtheorem{example}[theorem]{Example}
\newtheorem{remark}[theorem]{Remark}

\begin{document}
\title[Cosemisimple Hopf algebras]{A class of (infinite-dimensional) cosemisimple Hopf algebras constructed via abelian extensions}

\author[J. Yu]{Jing Yu$^\dag$}
\author[G. Liu]{Gongxiang Liu}
\author[K. Zhou]{Kun Zhou}
\author[X. Zhen]{Xiangjun Zhen}
\address{School of Mathematics, Nanjing University, Nanjing 210093, China}
\email{dg21210018@smail.nju.edu.cn}
\address{School of Mathematics, Nanjing University, Nanjing 210093, China}
\email{gxliu@nju.edu.cn}
\address{Beijing Institute of Mathematical Sciences and Applications, Beijing 101408, China}
\email{kzhou@bimsa.cn}
\address{School of Mathematics, Nanjing University, Nanjing 210093, China}
\email{xjzhen@smail.nju.edu.cn}
\thanks{2020 \textit{Mathematics Subject Classification}. 16T05, 16T15.}
\keywords{Hopf algebras, Cosemisimple, Abelian extension}
\thanks{$^\dag$ Corresponding author}
\date{}

\begin{abstract} In this paper, we aim to study abelian extensions for some infinite group. We show that the Hopf algebra $\Bbbk^G{}^\tau\#_{\sigma}\Bbbk F$ constructed through abelian extensions of $\Bbbk F$ by $\Bbbk^G$ for some (infinite) group $F$ and finite group $G$ is cosemisimple, and discuss when it admits a compact quantum group structure if $\Bbbk$ is the field of complex numbers $\mathbb{C}.$
We also find all the simple $\Bbbk^G{}^\tau\#_{\sigma}\Bbbk F$-comodules and attempt to determine the Grothendieck ring of the category of finite-dimensional right $\Bbbk^G{}^\tau\#_{\sigma}\Bbbk F$-comodules. Moreover, some new properties are given and some new examples are constructed.
\end{abstract}
\maketitle
\section{Introduction}
The concept of ``extension" has been recognized as important tools in order to understand the structure of the groups and of their representations since the work of Frobenius, Schur and others. This line of thought has been carried to the theory of Hopf algebras. See, for example, \cite{AD95, And96, Hof94, Kac68, Mas02, Sch93, Sin72, Tak81}. In particular, the Hopf algebra extensions
\begin{eqnarray*} K\xrightarrow{\iota} H \xrightarrow{\pi} A
\end{eqnarray*}
which are abelian in the sense that $A$ is cocommutative and $K$ is commutative have been applied especially for the classification problem of semisimple Hopf algebras. See, for example, \cite{DW13, GW00, Kas00, Mas95, Nat99}. Let $G, F$ be finite groups and $\k^G$ denote the dual Hopf algebra of $\k G$.
Abelian extensions
\begin{eqnarray}\label{eq:intro}
 \k^G\xrightarrow{\iota} H\xrightarrow{\pi} \k F
\end{eqnarray} of $\k F$ by $\k^G$
were classified by Masuoka (\cite[Proposition 1.5]{Mas02}), and the above $H$ can be expressed as $\k^G\#_{\sigma, \tau}\k F$.

At the same time, cosemisimple Hopf algebras or compact quantum groups have received considerable attention. See, for instance, \cite{AC13, AFH12, Chi14, CWW19, CK16, NY18}.
As is well-known, an extension of finite-dimensional semisimple Hopf algebras is semisimple (by \cite[Theorem 3.5]{MD92} and \cite[Proposition 3.1.18]{And96}).
Andruskiewitsch and Cuadra extended this fact to cosemisimple Hopf algebras under some mild conditions (\cite[Theorem 2.13]{AC13}).
A natural question arises:
can we remove the finiteness condition on $F$ in (\ref{eq:intro}) to construct a class of infinite-dimensional cosemisimple Hopf algebras through abelian extensions of $\k F$ by $\k^G$? Furthermore, we also aim to determine whether it admits a compact quantum group structure if $\k$ is the field of complex numbers $\mathbb{C}$.

Besides, much effort was put in the study of fusion categories from a Hopf algebraic perspective in recent years. The module categories of semisimple Hopf algebras
have been studied intensively by many authors. See, for example,  \cite{Bur17, JM09, KMM02, LM00, Orl24}.
It is interesting to generalize some of their results to the setting of semisimple tensor categories with infinitely many isomorphism classes of simple objects.
Note that the category of finite-dimensional right comodules over an infinite-dimensional cosemisimple Hopf algebra is precisely such a category.

The aim of this paper is to construct a class of (infinite-dimensional) cosemisimple Hopf algebras through abelian extensions, and study their right comodule categories.
For any matched pair of groups $(F, G)$ where $G$ is a finite group, we prove that $\k^G{}^\tau\#_{\sigma}\k F$ is a cosemisimple Hopf algebra with the described algebra and coalgebra structures (see Propositions \ref{prop:Hopf} and \ref{prop:cosemisimple}). In particular, if $\k$ is the field of complex numbers $\mathbb{C}$, we discuss when $\mathbb{C}^G{}^\tau\#_{\sigma}\mathbb{C} F$ admits a compact quantum group structure (see Proposition \ref{prop:CQG}).
Subsequently, we provide a characterization of their comodule catgeories. We give a description for simple right $\k^G{}^\tau\#_{\sigma}\k F$-comodules, and show that all the simple right $\k^G{}^\tau\#_{\sigma}\k F$-comodules may be described as induced comodules from some certain coalgebras. Our main result is Theorem \ref{thm:simple}, stating that:
\begin{theorem}\label{thm:intro}
Let $H=\k^G{}^\tau\#_{\sigma}\k F$ be the Hopf algebra as in Proposition \ref{prop:Hopf}, and fix an element $f\in F.$ For each $G$-orbit $O_f$ of $F$, let $V$ be a right $\k_{\tau_f}^{G_f}$-comodule. Let $\tilde{V}=(V\otimes \k f)\Box_{\k^{G_f}{}^\tau\#\k F} H$ be the induced right $H$-comodule as in Lemma \ref{lem:inducedcomodule}.
\begin{itemize}
  \item [(1)]If $V$ is a simple right $\k_{\tau_f}^{G_f}$-comodule, then $\tilde{V}$ is a simple right $H$-comodule.
  \item [(2)]Every simple right $H$-comodule is isomorphic to $\tilde{V}$ for some simple right $\k_{\tau_f}^{G_f}$-comodule, where $f$ ranges over a choice of one element in each $G$-orbit of $F$.
\end{itemize}
\end{theorem}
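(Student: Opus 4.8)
The plan is to exploit the cosemisimplicity of $H$ (Proposition \ref{prop:cosemisimple}) to reduce the classification to bookkeeping over the $G$-orbits of $F$. The basic device is the Hopf surjection $\pi\colon H\to\k F$: for any right $H$-comodule $(W,\rho)$, the map $(\id\otimes\pi)\rho$ makes $W$ a right $\k F$-comodule, i.e.\ an $F$-graded space $W=\bigoplus_{x\in F}W_x$ with $W_x=\{w\in W:(\id\otimes\pi)\rho(w)=w\otimes x\}$. For $w\in W_x$ one has $\rho(w)\in W\otimes(\k^G\# x)$, so write $\rho(w)=\sum_{g\in G}w^{g}\otimes(e_g\# x)$. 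Feeding this into coassociativity together with the explicit comultiplication of $H$, I expect the key relation $w^{g}\in W_{g\cdot x}$, where $\cdot$ is the matched-pair action of $G$ on $F$; thus the $g$-component of the coaction translates $W_x$ to $W_{g\cdot x}$. Consequently $H=\bigoplus_{O}C_O$ splits as a direct sum of subcoalgebras $C_O=\span\{e_g\# x:g\in G,\ x\in O\}$ indexed by the $G$-orbits $O$ of $F$, so every simple $H$-comodule is supported on a single orbit, and the translation maps move freely among the graded pieces lying over that orbit.

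For part (1), I would first read off the grading of $\widetilde V$ from the cotensor description in Lemma \ref{lem:inducedcomodule}: its $f$-component is $V\otimes\k f$, while the remaining nonzero components are indexed by the orbit $O_f$, each being a translate of $V\otimes\k f$ and so of the same size. Let $U\subseteq\widetilde V$ be a nonzero $H$-subcomodule. Its support is a nonempty $G$-stable subset of $O_f$, hence all of $O_f$, so $U_f\neq0$. But $U_f$ is a nonzero $\k_{\tau_f}^{G_f}$-subcomodule of $(\widetilde V)_f=V\otimes\k f\cong V$, so simplicity of $V$ forces $U_f=V\otimes\k f$. Applying the translation maps to $U_f$ then fills every graded component, whence $U=\widetilde V$; thus $\widetilde V$ is simple.

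For part (2), take a simple right $H$-comodule $W$. By the orbit decomposition of $H$ above it is supported on a single orbit $O_f$; fix $f$ in it. For $g\in G_f$ the relation $w^{g}\in W_{g\cdot f}=W_f$ shows that the stabilizer components of the coaction preserve $W_f$, and I claim they endow $W_f$ with the structure of a right $\k_{\tau_f}^{G_f}$-comodule, where $\tau_f$ is the twist of $\tau$ governing the subcoalgebra $\k^{G_f}\# f\subseteq H$. Placing $W_f$ in degree $f$, one checks it is a $K$-subcomodule of the corestriction $\phi_*W$ along the coalgebra surjection $\phi\colon H\to K:=\k^{G_f}{}^\tau\#\k F$; since $K$ is cosemisimple there is a $K$-comodule projection $\phi_*W\twoheadrightarrow W_f\otimes\k f$. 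By the adjunction between corestriction along $\phi$ and coinduction $-\Box_K H$ (Frobenius reciprocity), this corresponds to a nonzero $H$-comodule map $W\to(W_f\otimes\k f)\Box_K H=\widetilde{W_f}$. Finally $W_f$ is simple over $\k_{\tau_f}^{G_f}$ (a proper nonzero subcomodule would generate, via translation, a proper nonzero $H$-subcomodule of $W$), so by part (1) $\widetilde{W_f}$ is simple; a nonzero map between simple comodules is an isomorphism, giving $W\cong\widetilde{W_f}$.

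The main obstacle is not the global orbit combinatorics, which is forced once simplicity and cosemisimplicity are available, but the local cocycle bookkeeping: identifying $\tau_f$ as a genuine $2$-cocycle on $G_f$, verifying that the stabilizer components of the coaction satisfy the $\tau_f$-twisted comodule axiom on $W_f$, and confirming that $W_f\otimes\k f$ is indeed a $K$-subcomodule so that Frobenius reciprocity applies. Extracting the precise relation $w^{g}\in W_{g\cdot x}$ from coassociativity with all the matched-pair and $\tau$-data correctly tracked is the computational heart of the argument, and cosemisimplicity of both $H$ and $K$ is what makes the intermediate decompositions and the concluding isomorphism go through cleanly.
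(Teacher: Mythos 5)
Your proposal is correct in outline, and while your part (1) is essentially the paper's argument in different clothing, your part (2) follows a genuinely different route. For (1), the paper takes a nonzero vector $w=\sum_z v^{(z)}\otimes f\otimes z$ in a subcomodule, applies the explicit coaction formula of Lemma \ref{lem:inducedcomodule}, and uses linear independence of the elements $p_{x^{-1}z}\#(z^{-1}\triangleright f)$ together with Larson's lemma to recover all of $\tilde V$; your grading argument (support is $G$-stable, hence all of $O_f$; the degree-$f$ piece is a $\k^{G_f}_{\tau_f}$-subcomodule of $V$; translate back out) is the same mechanism organized around the $\k F$-corestriction, and your key relation $w^g\in W_{g\triangleright x}$ with the injectivity of the translation maps does follow from coassociativity as you expect. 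For (2), the paper proves that nonisomorphic $V$ yield nonisomorphic $\tilde V$ and then runs a dimension count, $\sum_V(\dim\tilde V)^2=|T_f|^2\sum_V(\dim V)^2=|T_f|\,|G|=\dim_\k C_f$, to conclude that the $\tilde V$ exhaust the simples of $C_f$; you instead extract $W_f$ from a simple $W$, show it is simple over $\k^{G_f}_{\tau_f}$, and use the Hom--cotensor adjunction to produce a nonzero map $W\to\widetilde{W_f}$ between simples. Your route avoids both the counting and the separate injectivity-on-isoclasses step, at the cost of the adjunction formalism. Two points need care. First, you invoke cosemisimplicity of $K=\k^{G_f}{}^\tau\#\k F$ to split off $W_f\otimes\k f$; the paper never establishes this, and quotient coalgebras of cosemisimple coalgebras need not be cosemisimple. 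Fortunately you do not need it: for $g\in G_f$ and $x\in O_f$, $g\triangleright x=f$ forces $x=f$ (as $g$ fixes $f$ and acts bijectively), so the degree-$f$ projection $\phi_*W\to W_f\otimes\k f$ is already a $K$-comodule retraction with no splitting argument required. Second, the parenthetical claim that a proper subcomodule $V'\subsetneq W_f$ generates a proper $H$-subcomodule needs the observation that the generated subcomodule is $\operatorname{span}\{w^g: w\in V',\,g\in G\}$ and that its degree-$f$ component is exactly $\operatorname{span}\{w^g: w\in V',\,g\in G_f\}=V'$, which holds because $(w^g)^{g''}$ is a scalar multiple of $w^{g''g}$. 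With these two repairs the argument is complete.
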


In fact, studying (co)semisimple Hopf algebras through the characters of (co)modules was shown to be a very effective way, which is due to the work of Larson \cite{Lar71}. See, for example, \cite{Bur15, Bur17, CW14, YLL24}. One of key points of the character theory is that one can identify a simple comodule with its character.
For our purpose, we characterize the characters of all the simple right $\k^G{}^\tau\#_{\sigma}\k F$-comodules.
This appears as Proposition \ref{prop:irreduciblecharacter} in this paper:
\begin{proposition}
With the notations in Lemma \ref{lem:inducedcomodule}, let $V=\span\{v^{(1)}, \cdots, v^{(m)}\}$ be a simple right $\k_{\tau_f}^{G_f}$-comodule and $\tilde{V}=(V\otimes \k f)\Box_{\k^{G_f}{}^\tau\#\k F} H$ be the induced right $H$-comodule. Suppose for any $1\leq i\leq m$, we have $$\rho(v^{(i)})=\sum_{j=1}^m v^{(j)}\otimes (\sum_{g\in G_f}a_{ji}^g p_g),$$
where $a_{ij}^g\in \k$ for any $1\leq i, j\leq m$ and $g\in G_f.$ Then the irreducible character $\chi(\tilde{V})$ of $\tilde{V}$ is
$$\sum\limits_{i=1}^m\sum\limits_{z\in T_f}\sum\limits_{g\in G_f}\tau(z^{-1},g^{-1};f)^{-1}\tau(z^{-1}g^{-1}z, z^{-1};f)a_{ii}^{g^{-1}}p_{z^{-1}g^{-1}z}\#(z^{-1}\triangleright f).$$
\end{proposition}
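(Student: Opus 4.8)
The plan is to build an explicit model of the induced comodule $\tilde V$, compute its right $H$-coaction on a convenient basis, and then read off the character as the sum of the diagonal matrix coefficients. First I would fix the description of $\tilde V$ furnished by Lemma~\ref{lem:inducedcomodule}. Since $\tilde V$ is a cotensor product and the orbit $O_f$ has coset representatives $T_f$, it comes with a basis indexed by pairs $(i,z)$ with $1\le i\le m$ and $z\in T_f$: the index $z$ records the $\k F$-component $z^{-1}\triangleright f$ ranging over $O_f$, and $i$ records the $V$-component $v^{(i)}$. Recall that for a finite-dimensional right $H$-comodule $W$ with coaction $\rho(w_i)=\sum_j w_j\otimes c_{ji}$ in a basis $\{w_i\}$, the character is $\chi(W)=\sum_i c_{ii}\in H$, independent of the basis. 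Thus the task reduces to computing, for each basis vector of $\tilde V$ labelled by $(i,z)$, the coefficient of that same vector in its image under $\rho$, and summing over all $(i,z)$.

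Next I would make $\rho$ explicit. The right coaction on $\tilde V$ is induced by $\Delta$ on the $H$-factor of the cotensor product, so computing it amounts to applying the comultiplication of $H=\k^G{}^\tau\#_\sigma\k F$ (from Proposition~\ref{prop:Hopf}) to the relevant elements $p_g\#x$ and feeding in the given coaction of $V$, namely $\rho(v^{(i)})=\sum_j v^{(j)}\otimes(\sum_{g\in G_f}a_{ji}^g p_g)$. Writing out $\Delta(p_g\#x)$ introduces the $2$-cocycle $\tau$, while the cotensor (equalizer) condition together with the $G_f$-grading of $V$ selects exactly the terms compatible with the transversal $T_f$. Carrying this out expresses $\rho$ on the basis vector $(i,z)$ as a sum over $j$ and over $g\in G_f$ of terms whose $\tilde V$-component is the vector labelled $(j,z')$ for an appropriate $z'$, and whose $H$-component is of the form $p_{(\cdot)}\#(\cdot)$ with a product of two $\tau$-values as coefficient.

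Finally I would extract the diagonal. A term contributes to $\chi(\tilde V)$ precisely when its $\tilde V$-component equals the original basis vector $(i,z)$; this forces $j=i$ and forces the $\k F$-label to return to $z^{-1}\triangleright f$, while the conjugation built into moving the $\k^{G}$-idempotent past the transversal element $z$ turns $p_g$ into $p_{z^{-1}g^{-1}z}$ after the reindexing $g\mapsto g^{-1}$ (legitimate since inversion permutes $G_f$). The two surviving cocycle factors assemble into $\tau(z^{-1},g^{-1};f)^{-1}\tau(z^{-1}g^{-1}z,z^{-1};f)$, and summing the contributions $\tau(z^{-1},g^{-1};f)^{-1}\tau(z^{-1}g^{-1}z,z^{-1};f)\,a_{ii}^{g^{-1}}\,p_{z^{-1}g^{-1}z}\#(z^{-1}\triangleright f)$ over $1\le i\le m$, $z\in T_f$ and $g\in G_f$ yields the asserted expression for $\chi(\tilde V)$.

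The main obstacle is the careful bookkeeping of the cocycle $\tau$ and of the action $\triangleright$ when transporting the local data at $f$ (governed by $G_f$, $\tau_f$ and $V$) into the global Hopf algebra $H$ (governed by $G$, $F$ and $\tau$): one must repeatedly invoke the normalization and cocycle identities for $\tau$ in order to collapse the several $\tau$-factors produced by $\Delta$ down to the stated ratio, and verify that the conjugation $g\mapsto z^{-1}g^{-1}z$ really keeps the idempotent index inside the correct set so that the diagonal extraction is well defined. The remainder is routine linear bookkeeping.
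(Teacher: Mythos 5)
Your proposal is correct and follows essentially the same route as the paper: the paper likewise writes $\tilde V$ on the basis $\{v^{(i)}\otimes f\otimes z\mid 1\le i\le m,\ z\in T_f\}$, computes $\tilde\rho$ explicitly there (Lemma \ref{lem:inducedcomodule}(3) and Lemma \ref{lem:coefficientcoalg}), and extracts the trace, the only cosmetic difference being that the paper identifies the diagonal entries of the resulting basic multiplicative matrix via the counit condition $\varepsilon(a_{IJ})=\delta_{I,J}$ (forcing $z=z'$ and $i=j$) rather than by directly matching basis vectors as you do. The expression you arrive at coincides with the paper's after the harmless reindexing $g\mapsto g^{-1}$ on $G_f$.
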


We should remark that the Grothendieck ring of the category of finite-dimensional comodules over a cosemisimple Hopf algebra is determined by the multiplication and dual of the irreducible characters (see Lemma \ref{lemma:char=ring}). With the help of the preceding proposition, we attempt to obtain the Grothendieck ring of the category of finite-dimensional right $\k^G{}^\tau\#_{\sigma}\k F$-comodules (see Propositions \ref{prop:tensorproduct} and \ref{prop:dual}). In particular, we discuss when a simple right $\k^G{}^\tau\#_{\sigma}\k F$-comodule is self-dual, and describe the Frobenius-Schur indicators for irreducible characters of all the simple right $\k^G{}^\tau\#_{\sigma}\k F$-comodules (see Remark \ref{prop:indicator}).
We finally apply these conclusions to the special case $\k^G{}\#\k F$ (see Lemma \ref{coro:smashsimple}, Propositions \ref{prop:smashdual} and \ref{prop:multiplication}) and construct some new examples.

The organization of this paper is as follows: Some definitions, notations and results related to characters of comodules, Grothendieck ring and matched pair of groups are presented in Section \ref{section2}.
In Section \ref{section3}, for any matched pair of groups $(F, G)$ where $G$ is a finite group, we prove that $\k^G{}^\tau\#_{\sigma}\k F$ is a cosemisimple Hopf algebra with the described algebra and coalgebra structures. Besides, we discuss when $\mathbb{C}^G{}^\tau\#_{\sigma}\mathbb{C} F$ is a compact quantum group.
We devote Section \ref{section3} to give a description for right $\k^G{}^\tau\#_{\sigma}\k F$-comodules, and show that all the simple right $\k^G{}^\tau\#_{\sigma}\k F$-comodules may be described as induced comodules from some certain coalgebras. In Section \ref{section4}, we characterize the characters of all the simple right $\k^G{}^\tau\#_{\sigma}\k F$-comodules. Then we attempt to determine the Grothendieck ring of the category of finite-dimensional right $\k^G{}^\tau\#_{\sigma}\k F$-comodules. Finally in Section \ref{section6}, we apply these conclusions to the special case $\k^G{}\#\k F$ and construct some new examples.

\section{Preliminaries}\label{section2}
Throughout this paper $\k$ denotes an \textit{algebraically closed field of characteristic} $0$ and all spaces are over $\k$. The tensor product over $\k$ is denoted simply by $\otimes$. We refer to \cite{Mon93} for the basics about Hopf algebras.
\subsection{Characters of comodules}
In this subsection, let $(H, \Delta, \varepsilon)$ be a coalgebra over $\k$.

Let us first recall the definition of multiplicative matrices.
\begin{definition}\emph{(}\cite[Definition 2.3]{Li22}\emph{)}
Let $(H,\Delta,\varepsilon)$ be a coalgebra over $\k$.
\begin{itemize}
  \item[(1)] A square matrix $\G=(g_{ij})_{r\times r}$ over $H$ is said to be multiplicative, if for any $1\leq i,j \leq r$, we have $\Delta(g_{ij})=\sum\limits_{t=1}^r g_{it}\otimes g_{tj}$ and $\varepsilon(g_{ij})=\delta_{i, j}$, where $\delta_{i, j}$ denotes the Kronecker notation;
  \item[(2)] A multiplicative matrix $\C$ is said to be basic, if its entries are linearly independent.
\end{itemize}
\end{definition}
Multiplicative matrices over a coalgebra $H$ can be understood as a generalization of group-like elements. We know that all the entries of a basic multiplicative matrix $\C$ span a simple subcoalgebra $C$ of $H$. Conversely, for any simple coalgebra $C$ over
$\k$, there exists a basic multiplicative matrix $\C$ whose entries span $C$ (for details, see \cite{LZ19}, \cite{Li22}).
And according to \cite[Lemma 2.4]{Li22}, the basic multiplicative matrix of the simple coalgebra $C$ would be unique up to the similarity relation.

\begin{definition}\emph{(}\cite[Definition 3.2.10]{Rad12}\emph{)}
Let $H$ be a coalgebra over $\k$ and $M$ be a right $H$-comodule with comodule structure $\rho: M\rightarrow M\otimes H$. The coefficient coalgebra $\operatorname{cf}(M)$ of $M$ is the smallest subcoalgebra of $H$ such that $\rho(M)\subseteq M\otimes \operatorname{cf}(M)$.
\end{definition}

Suppose that $M$ is a simple right $H$-comodule. According to \cite[Theorem 3.2.11]{Rad12}, $\operatorname{cf}(M)$ is a simple subcoalgebra of $H$.
In addition, it follows from \cite[Lemma 2.1]{Lar71} that for each basis $\{m_1, m_2, \cdots, m_n\}$ of $M$, there exists a basis $\{c_{ij}\mid 1\leq i, j\leq n\}$ of $\operatorname{cf}(M)$ such that
$$\rho(m_i)=\sum\limits_{j=1}^n m_j\otimes c_{ji},$$
where $(c_{ij})_{n\times n}$ is a basic multiplicative matrix of $\operatorname{cf}(M)$.

\begin{definition}\emph{(}\cite[Section 1]{Lar71}\emph{)}
Let $H$ be a coalgebra over $\k$ and $(M, \rho)$ be a right $H$-comodule. Pick a basis $\{m_1, m_2, \cdots, m_n\}$ for $M$ and let $\{m_1^*, m_2^*, \cdots, m_n^*\}$ be the basis of $W^*$ dual to this basis. The character $\chi(M)$ of the comodule $M$ is defined by
$$
\chi(M)=\sum_{i=1}^n (m_i^*\otimes \id)\rho(m_i).
$$
If $M$ is a simple right $H$-comodule, then $\chi(M)$ is called an irreducible character.
\end{definition}
Note that $\chi(M)$ does not depend on the particular choice of basis $\{m_1, m_2, \cdots, m_n\}.$
Moreover, for any finite-dimensional right $H$-comodules $M$ and $N$, we have $$\chi(M\bigoplus N)=\chi(M)+\chi(N).$$

The following lemma is immediate.
\begin{lemma}\emph{(}\cite[Section 1]{Lar71}\emph{)}\label{lem:char}
 Let $H$ be a coalgebra over $\k$ and $(M, \rho)$ be a simple right $H$-comodule. If $$\rho(m_i)=\sum_{j=1}^n m_j\otimes c_{ji},$$ for any $1 \leq i\leq n$, then $\chi(M)=\sum\limits_{i=1}^n c_{ii}.$
\end{lemma}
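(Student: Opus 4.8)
The plan is to verify the identity by a single direct substitution into the definition of the character, the only real ingredient being the defining property of the dual basis. First I would recall that, by definition,
$$\chi(M)=\sum_{i=1}^n (m_i^*\otimes \id)\rho(m_i),$$
and then insert the hypothesis $\rho(m_i)=\sum_{j=1}^n m_j\otimes c_{ji}$ into each summand, so that
$$\chi(M)=\sum_{i=1}^n (m_i^*\otimes \id)\Bigl(\sum_{j=1}^n m_j\otimes c_{ji}\Bigr)=\sum_{i=1}^n\sum_{j=1}^n m_i^*(m_j)\,c_{ji}.$$

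Applying $m_i^*\otimes \id$ to $m_j\otimes c_{ji}$ and using $m_i^*(m_j)=\delta_{i,j}$, each term collapses to $\delta_{i,j}\,c_{ji}$; summing over $j$ leaves only the contribution from $j=i$, and then summing over $i$ gives $\chi(M)=\sum_{i=1}^n c_{ii}$, as claimed. Note that simplicity of $M$ is not actually used in this computation — it merely supplies the context in which the coefficients $c_{ij}$ form a basic multiplicative matrix — so the same argument computes $\chi(M)$ for any finite-dimensional comodule presented in this form.

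There is essentially no obstacle here: the statement is an immediate unwinding of the definition, which is why it is flagged as such in the text. The only point requiring minimal care is bookkeeping of indices, namely keeping straight which index is contracted against the dual functional $m_i^*$ and which survives in $H$. Because the convention $\rho(m_i)=\sum_j m_j\otimes c_{ji}$ expands $\rho(m_i)$ along the $i$-th column of the matrix $(c_{ij})$, the contraction with $m_i^*$ selects precisely the diagonal entries $c_{ii}$, which accounts for the asserted trace-like form of the character.
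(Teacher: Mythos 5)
Your computation is correct and is exactly the "immediate" unwinding the paper has in mind (the paper gives no proof, simply labelling the lemma as immediate from the definition of $\chi$ and the dual-basis relation $m_i^*(m_j)=\delta_{i,j}$). Your side remark that simplicity of $M$ plays no role in the calculation is also accurate.
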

Let $M$ be a simple right $H$-comodule and $\C (M)$ be a basic multiplicative matrix of the simple subcoalgebra $\operatorname{cf}(M)$. It is apparent from Lemma \ref{lem:char} that $$\chi(M)=\operatorname{tr}(\C (M)).$$ For any simple right $H$-comodules $M, N$, we can easily prove that $$M\cong N$$ as $H$-comodules if and only if $$\operatorname{cf}(M)\cong \operatorname{cf}(N)$$ as coalgebras, if and only if $$\chi(M)=\chi(N).$$

\subsection{Grothendieck ring}
Let $\mathbb{Z}_+$ be the set of nonnegative integers. Some relevant concepts are recalled as follows.
\begin{definition}\emph{(}\cite[Definitions 2.1 and 2.2]{Ost03}\emph{)}
Let $A$ be an associative ring with unit which is free as a $\mathbb{Z}$-module.
\begin{itemize}
  \item[(1)]A $\mathbb{Z}_+$-basis of $A$ is a basis $B=\{b_{i}\}_{i\in I}$ such that $b_ib_j=\sum_{t\in I}c_{ij}^tb_t$, where $c_{ij}^t\in\mathbb{Z}_+$.
  \item[(2)]A ring with a fixed $\mathbb{Z}_+$-basis $\{b_i\}_{i\in I}$ is called a unital based ring if the following conditions hold:
  \begin{itemize}
  \item[(i)]$1$ is a basis element.
  \item[(ii)]Let $\tau: A\rightarrow \mathbb{Z}$ denote the group homomorphism defined by
  $$\tau(b_i)=\left\{
\begin{aligned}
1,~~~  \text{if} ~~~ b_i=1, \\
0,~~~  \text{if} ~~~ b_i\neq1.
\end{aligned}
\right.$$
There exists an involution $i \mapsto i^*$ of $I$ such that the induced map
$$a=\sum\limits_{i\in I}a_ib_i \mapsto a^*=\sum\limits_{i\in I}a_ib_{i^*},\;\; a_i\in \mathbb{Z}$$ is an anti-involution of $A$, and
$$\tau(b_ib_j)=\left\{
\begin{aligned}
1,~~~  \text{if} ~~~ i=j^*, \\
0,~~~  \text{if} ~~~ i\neq j^*.
\end{aligned}
\right.$$
  \end{itemize}
\end{itemize}
\end{definition}
In the following part, let $H$ be a cosemisimple Hopf algebra over $\k$ and $\mathcal{S}$ be the set of all the simple subcoalgebras of $H$.

For any matrix $\A=(a_{ij})_{r\times s}$ and $\B=(b_{ij})_{u\times v}$ over $H$, define $\A\odot^\prime \B$ as follow:
 $$
\A\odot^\prime \B=\left(\begin{array}{cccc}
      \A b_{11} &   \cdots &  \A b_{1v} \\
      \vdots &  \ddots & \vdots  \\
      \A b_{u1} &  \cdots & \A b_{uv}
    \end{array}\right).$$

For any $B, C\in\mathcal{S}$ with basic multiplicative matrices $\B, \C$ respectively.
It follows from
\cite[Proposition 2.6(2)]{Li22} that there exists an invertible matrix $L$ over $\k$ such that
\begin{equation}\label{equationCD}
L
(\B\odot^{\prime}\C) L^{-1}=
\left(\begin{array}{cccc}
      \E_1 & 0 & \cdots & 0  \\
      0 & \E_2 & \cdots & 0  \\
      \vdots & \vdots & \ddots & \vdots  \\
      0 & 0 & \cdots & \E_t
    \end{array}\right),
    \end{equation}
where $\E_1, \E_2, \cdots, \E_t$ are basic multiplicative matrices over $H$.
Define a multiplication on $\mathbb{Z}\mathcal{S}$ as follow: for $B, C\in \mathcal{S}$,
$$B\cdot C=\sum\limits_{i=1}^t E_i,$$
where $E_1, \cdots, E_t\in\mathcal{S}$ are well-defined with basic multiplicative matrices $\E_i$ as in (\ref{equationCD}).
It should be pointed out that $$\operatorname{tr}(\B)\operatorname{tr}(\C)=\sum\limits_{i=1}^t\operatorname{tr}(\E_i).$$

Let $S$ be the antipode of $H$. According to \cite[Theorem 3.3]{Lar71}, we know that the antipode $S$ of $H$ is bijective, and for each simple subcoalgebra $C$ of $H$, we have $S^2(C)=C.$ Thus we get an anti-involution $C\mapsto S(C)$ of $\mathcal{S}$.

With the multiplication and anti-involution defined above, we obtain the following Lemma.
\begin{lemma}\emph{(}\cite[Proposition 4.3]{YLL24}\emph{)}\label{Prop:basedring}
Let $H$ be a cosemisimple Hopf algebra over $\k$ and $\mathcal{S}$ be the set of all the simple subcoalgebras of $H$. Then $\mathbb{Z}\mathcal{S}$ is a unital based ring with $\mathbb{Z}_+$-basis $\mathcal{S}$.
\end{lemma}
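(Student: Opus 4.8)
The plan is to identify $\mathbb{Z}\mathcal{S}$ with the Grothendieck ring of the semisimple tensor category $\mathcal{M}^H$ of finite-dimensional right $H$-comodules, and then transport every based-ring axiom along this identification. Since $H$ is cosemisimple, the assignment $M\mapsto \operatorname{cf}(M)$ gives a bijection between isomorphism classes of simple right $H$-comodules and $\mathcal{S}$ (by \cite[Theorem 3.2.11]{Rad12} and the discussion following it), with inverse sending $C\in\mathcal{S}$ to the unique simple comodule $M_C$ whose coefficient coalgebra is $C$. The first thing I would verify is that the product $B\cdot C=\sum_{i=1}^t E_i$ defined through the block decomposition (\ref{equationCD}) agrees with the class of $M_B\otimes M_C$. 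Concretely, if $\B,\C$ are basic multiplicative matrices for $B,C$, then $\B\odot^{\prime}\C$ is a multiplicative matrix affording the comodule $M_B\otimes M_C$, so spanning $\operatorname{cf}(M_B\otimes M_C)$; conjugating it into block-diagonal form (\ref{equationCD}) realizes the decomposition of $M_B\otimes M_C$ into simple summands, each block $\E_i$ being a basic multiplicative matrix of a simple constituent $M_{E_i}$.

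Granting this dictionary, associativity, the unit, and freeness over $\mathbb{Z}$ are all inherited from $\mathcal{M}^H$. Associativity of the product comes from the associativity constraint of the tensor product; the unit is the one-dimensional subcoalgebra $\k\unit\in\mathcal{S}$ corresponding to the trivial comodule, so in particular $1$ is a basis element; and $\mathbb{Z}\mathcal{S}$ is free on $\mathcal{S}$ by construction. The $\mathbb{Z}_+$-basis property is then immediate: cosemisimplicity forces $M_B\otimes M_C$ to be a direct sum of simple comodules, so each structure constant is the nonnegative integer multiplicity of the corresponding simple object in $M_B\otimes M_C$.

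It remains to supply the involution and check the $\tau$-axiom. For the involution I take $C\mapsto S(C)$; by \cite[Theorem 3.3]{Lar71} the antipode is bijective and $S^2(C)=C$, so this is indeed an involution of $\mathcal{S}$, and on comodules it corresponds to $M\mapsto M^*$ since the matrix coefficients of $M^*$ are the images under $S$ of those of $M$, giving $\operatorname{cf}(M^*)=S(\operatorname{cf}(M))$. That the induced $\mathbb{Z}$-linear map is an anti-involution then follows from the rigidity isomorphism $(M_B\otimes M_C)^*\cong M_C^*\otimes M_B^*$, i.e. $(B\cdot C)^*=C^*\cdot B^*$. Finally, $\tau$ reads off the coefficient of the unit $\k\unit$, so $\tau(B\cdot C)$ is the multiplicity of the trivial comodule in $M_B\otimes M_C$, equal to $\dim\Hom^{H}(\k,M_B\otimes M_C)=\dim\Hom^{H}(M_B^*,M_C)$. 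By Schur's lemma for the simple comodules $M_B^*$ and $M_C$ this is $1$ precisely when $M_C\cong M_B^*$, that is when $C=S(B)=B^*$ (equivalently $i=j^*$), and $0$ otherwise, which is exactly the required condition.

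I expect the genuine obstacle to be the first step: rigorously matching the purely formal product coming from (\ref{equationCD}) with the honest tensor-product decomposition of comodules. This requires checking that $\B\odot^{\prime}\C$ really is a multiplicative matrix affording $M_B\otimes M_C$, and that extracting a basic multiplicative matrix from each diagonal block is independent of the choices of $\B,\C$ and of the conjugating matrix $L$, so that the summands $E_i\in\mathcal{S}$ are well defined. Once this correspondence is established, all the based-ring axioms are categorical consequences of the semisimplicity and rigidity of $\mathcal{M}^H$.
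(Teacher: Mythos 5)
Your proposal is correct. The paper itself offers no proof of this lemma (it is imported from \cite[Proposition 4.3]{YLL24}), but the route you take is precisely the dictionary the paper sets up right afterwards in Lemmas \ref{lem:ZS} and \ref{lemma:char=ring}: identify $\mathbb{Z}\mathcal{S}$ with $\operatorname{Gr}(H\text{-comod})$ via $M\mapsto\operatorname{cf}(M)$ and transport the based-ring axioms from the semisimple rigid category of finite-dimensional comodules. Your key step checks out: if $\rho_M(m_i)=\sum_j m_j\otimes b_{ji}$ and $\rho_N(n_k)=\sum_l n_l\otimes c_{lk}$, then the coefficient matrix of $M\otimes N$ in the basis $\{m_j\otimes n_l\}$ is exactly $\B\odot^{\prime}\C$, so conjugating into the block form (\ref{equationCD}) realizes the decomposition of $M_B\otimes M_C$ into simples; the well-definedness of the $E_i$ (independence of the choices of $\B$, $\C$ and $L$), which you rightly single out as the real content, then follows from Jordan--H\"older together with the uniqueness of basic multiplicative matrices up to similarity recorded from \cite[Lemma 2.4]{Li22}. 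Three small points are worth making explicit: simple comodules over a coalgebra are automatically finite dimensional, so duals exist; in the $\tau$-axiom, $\Hom(\k\unit,M_B\otimes M_C)\cong\Hom(M_B^*,M_C)$ uses the appropriate (right) dual, and since $S^2$ fixes every simple subcoalgebra by Larson's theorem, the condition $C=S(B)$ is equivalent to $B=S(C)$, so your criterion agrees with the symmetric form $i=j^*$ of the axiom; and there is a harmless bookkeeping issue as to whether $\B\odot^{\prime}\C$ affords $M_B\otimes M_C$ or $M_C\otimes M_B$, which at worst replaces the ring by its opposite and affects none of the axioms. A proof kept entirely at the level of subcoalgebras (as the cited source presumably does) would verify associativity and the $\tau$-condition by direct manipulation of multiplicative matrices; your categorical transport is cleaner once the dictionary is in place.
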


Let $\mathcal{F}$ be the free abelian group generated by isomorphism classes of finite-dimensional right $H$-comodules and $\mathcal{F}_0$ the subgroup of $\mathcal{F}$ generated by all expressions $[Y]-[X]-[Z]$, where $0\rightarrow X\rightarrow Y\rightarrow Z\rightarrow0$ is a short exact sequence of finite-dimensional right $H$-comodules.
Recall that the \textit{Grothendieck group} $\operatorname{Gr}(H$-comod$)$ of the category of finite-dimensional right $H$-comodules is defined by $$\operatorname{Gr}(H\text{-comod}):=\mathcal{F}/\mathcal{F}_0.$$

From \cite[Proposition 4.5.4]{EGNO15} and \cite[Theorem 2.7]{Lar71}, $\operatorname{Gr}(H$-comod$)$ is a unital based ring with $\mathbb{Z}_+$-basis $\mathcal{V}$, where $\mathcal{V}$ is the set of all the isomorphism classes of simple right $H$-comodules.
One can show that:
\begin{lemma}\emph{(}\cite[Lemma 2.4]{YL24}\emph{)}\label{lem:ZS}
Let $H$ be a cosemisimple Hopf algebra over $\k$ and $\mathcal{S}$ be the set of all the simple subcoalgebras of $H$. Then \begin{eqnarray*}
F:\operatorname{Gr}(H\text{-comod})&\rightarrow&\mathbb{Z}\mathcal{S},\\
 M\;\;\;\;&\mapsto &\operatorname{cf}(M).
\end{eqnarray*} is an isomorphism between unital based rings.
\end{lemma}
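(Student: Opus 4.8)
The plan is to verify in turn that $F$ is a well-defined group isomorphism carrying one $\mathbb{Z}_+$-basis onto the other, that it is a unital ring homomorphism, and finally that it respects the anti-involutions. Since $H$ is cosemisimple, the category of finite-dimensional right $H$-comodules is semisimple, so $\operatorname{Gr}(H\text{-comod})$ is the free abelian group on the set $\mathcal{V}$ of isomorphism classes of simple comodules; it therefore suffices to define $F$ on this basis by $[M]\mapsto \operatorname{cf}(M)$ and extend $\mathbb{Z}$-linearly. By \cite[Theorem 3.2.11]{Rad12}, $\operatorname{cf}(M)$ is a simple subcoalgebra whenever $M$ is simple, so $F$ does map $\mathcal{V}$ into $\mathcal{S}$.

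First I would show that $F$ restricts to a bijection $\mathcal{V}\to\mathcal{S}$. Injectivity is immediate from the equivalences recorded just after Lemma \ref{lem:char}, namely that $M\cong N$ as comodules if and only if $\operatorname{cf}(M)\cong\operatorname{cf}(N)$ as coalgebras. For surjectivity, given any $C\in\mathcal{S}$ I would choose a basic multiplicative matrix $(c_{ij})_{r\times r}$ spanning $C$ and form the $\k$-space $M$ with basis $\{m_1,\dots,m_r\}$ and coaction $\rho(m_i)=\sum_{j} m_j\otimes c_{ji}$; then $\operatorname{cf}(M)=C$ by construction, and $M$ is simple since its coefficient coalgebra is simple of dimension $r^2=(\dim M)^2$, forcing it to be the unique simple $C$-comodule. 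Hence $F$ is an isomorphism of free abelian groups sending the $\mathbb{Z}_+$-basis $\mathcal{V}$ bijectively onto the $\mathbb{Z}_+$-basis $\mathcal{S}$.

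Next I would check that $F$ is a unital ring homomorphism. The unit of $\operatorname{Gr}(H\text{-comod})$ is the trivial comodule $\k$, whose coefficient coalgebra is $\k\unit$, the unit of $\mathbb{Z}\mathcal{S}$. For multiplicativity, take simple comodules $M,N$ with coactions given by basic multiplicative matrices $\B=(b_{ji})$ and $\C=(c_{lk})$; then $M\otimes N$ carries the coaction $\rho(m_i\otimes n_k)=\sum_{j,l}(m_j\otimes n_l)\otimes b_{ji}c_{lk}$, so its multiplicative matrix is precisely $\B\odot^{\prime}\C$ up to reindexing the tensor basis. Decomposing $\B\odot^{\prime}\C$ as in (\ref{equationCD}) via the invertible $L$ into basic multiplicative matrices $\E_1,\dots,\E_t$ corresponds to a change of basis of $M\otimes N$ under which its multiplicative matrix becomes block diagonal, i.e.\ to a decomposition $M\otimes N\cong E_1\oplus\cdots\oplus E_t$ into simple subcomodules with $\operatorname{cf}(E_i)$ spanned by the entries of $\E_i$. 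Therefore $F([M][N])=F([M\otimes N])=\sum_{i}\operatorname{cf}(E_i)=\sum_{i}E_i$, which is exactly the product $\operatorname{cf}(M)\cdot\operatorname{cf}(N)$ defined on $\mathbb{Z}\mathcal{S}$. I expect this matching to be the main obstacle: one must argue that the purely matrix-theoretic block decomposition of $\B\odot^{\prime}\C$ genuinely realizes the isotypic decomposition of the comodule $M\otimes N$, which I would justify by noting that a change of basis of $M\otimes N$ effects a simultaneous similarity of its multiplicative matrix and that basic multiplicative matrices correspond bijectively to simple subcomodules.

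Finally, although a based-ring isomorphism is nothing more than a ring isomorphism carrying one $\mathbb{Z}_+$-basis onto the other—so the previous steps already suffice—for completeness I would also observe directly that $F$ intertwines the anti-involutions. Indeed, the dual comodule $M^{*}$ has multiplicative matrix $(S(b_{ji}))$, whence $\operatorname{cf}(M^{*})=S(\operatorname{cf}(M))$, which matches the anti-involution $C\mapsto S(C)$ on $\mathcal{S}$ recalled before Lemma \ref{Prop:basedring}. This completes the identification of $F$ as an isomorphism of unital based rings.
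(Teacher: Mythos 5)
Your proposal is correct. The paper does not actually prove this lemma---it is imported verbatim from \cite[Lemma 2.4]{YL24}---so there is no in-paper argument to compare with; your proof is the standard one, and the step you single out as the main obstacle (that the block decomposition (\ref{equationCD}) of $\B\odot^{\prime}\C$ under conjugation by $L$ realizes the decomposition of $M\otimes N$ into simple subcomodules, because a change of basis of a comodule conjugates its multiplicative matrix) is indeed the only point requiring care, and you justify it adequately. One small caution: for injectivity on simples you should invoke equality of $\operatorname{cf}(M)$ and $\operatorname{cf}(N)$ as subcoalgebras of $H$ (each simple subcoalgebra having a unique simple comodule up to isomorphism), rather than the abstract coalgebra isomorphism stated after Lemma \ref{lem:char}, since distinct simple subcoalgebras of $H$ of equal dimension are abstractly isomorphic without the corresponding comodules being isomorphic.
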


Let $\Lambda$ be the set of all the irreducible characters of simple right $H$-comodules. Define a multiplication on $\mathbb{Z}\Lambda$ as follow: for $\chi(M), \chi(N)\in \Lambda$,
$$\chi(M)\chi(N)=\chi(M\otimes N).$$
Note that for any simple right $H$-comodule $M$, we have $M^*$ is also a simple right $H$-comodule and $$\operatorname{cf}(M^*)=S(\operatorname{cf}(M)).$$ This indicates that $$\chi(M^*)=S(\chi(M)).$$
It is straightforward to show that $\mathbb{Z}\Lambda$ is a unital based ring with $Z_+$-basis $\Lambda$.

As mentioned above, we conclude this subsection by point out the following lemma.
\begin{lemma}\label{lemma:char=ring}
Let $H$ be a cosemisimple Hopf algebra over $\k$ and $\Lambda$ be the set of all the irreducible characters of simple right $H$-comodules. Then \begin{eqnarray*}
F:\operatorname{Gr}(H\text{-comod})&\rightarrow&\mathbb{Z}\Lambda,\\
 M\;\;\;\;&\mapsto &\chi(M).
\end{eqnarray*} is an isomorphism between unital based rings.
\end{lemma}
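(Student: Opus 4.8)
The plan is to realize $F$ as a composite with the already-established isomorphism of Lemma~\ref{lem:ZS}. Recall from that lemma that $G\colon \operatorname{Gr}(H\text{-comod})\to\mathbb{Z}\mathcal{S}$, $M\mapsto\operatorname{cf}(M)$, is an isomorphism of unital based rings. I would then introduce the trace map $\Phi\colon\mathbb{Z}\mathcal{S}\to\mathbb{Z}\Lambda$ defined on the $\mathbb{Z}_+$-basis $\mathcal{S}$ by $C\mapsto\operatorname{tr}(\C)$, where $\C$ is a basic multiplicative matrix of $C$, and extended $\mathbb{Z}$-linearly. This is well defined because a basic multiplicative matrix of $C$ is unique up to the similarity relation and $\operatorname{tr}$ is similarity-invariant. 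Since $\operatorname{tr}(\C)=\chi(M)$ whenever $\operatorname{cf}(M)=C$ (Lemma~\ref{lem:char}), we get $\Phi\circ G=F$; in particular $F$ is a well-defined group homomorphism, and it remains only to show that $\Phi$ is an isomorphism of unital based rings, whence $F$ is as well.

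That $\Phi$ carries the basis $\mathcal{S}$ bijectively onto $\Lambda$ is exactly the correspondence recorded after Lemma~\ref{lem:char}: for simple comodules $M,N$ one has $M\cong N$ iff $\operatorname{cf}(M)\cong\operatorname{cf}(N)$ iff $\chi(M)=\chi(N)$, so $C\mapsto\operatorname{tr}(\C)$ is a bijection from $\mathcal{S}$ onto the set $\Lambda$ of irreducible characters. Hence $\Phi$ is an isomorphism of the underlying free $\mathbb{Z}$-modules taking basis to basis. The unit is preserved, since the trivial subcoalgebra $\k 1$ has basic multiplicative matrix $(1)$ with trace $1$, the character of the trivial comodule and the identity of $\mathbb{Z}\Lambda$. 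The anti-involutions agree as well: if $C=\operatorname{cf}(M)$ then $S(C)=\operatorname{cf}(M^*)$, and a basic multiplicative matrix of $S(C)$ is obtained by applying $S$ entrywise (and transposing) to $\C$, so $\Phi(S(C))=S(\operatorname{tr}(\C))=S(\chi(M))=\chi(M^*)$, which is precisely the image of $\Phi(C)$ under the anti-involution $\chi(M)\mapsto\chi(M^*)$ of $\mathbb{Z}\Lambda$.

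The heart of the argument is multiplicativity. For $B=\operatorname{cf}(M)$ and $C=\operatorname{cf}(N)$ with basic multiplicative matrices $\B=(\beta_{ij})$ and $\C=(\gamma_{pq})$, I would first identify $\B\odot^{\prime}\C$ as a multiplicative matrix of the tensor comodule $M\otimes N$: from $\rho(m_j)=\sum_i m_i\otimes\beta_{ij}$ and $\rho(n_q)=\sum_p n_p\otimes\gamma_{pq}$ one computes $\rho(m_j\otimes n_q)=\sum_{i,p}(m_i\otimes n_p)\otimes\beta_{ij}\gamma_{pq}$, and the coefficients $\beta_{ij}\gamma_{pq}$ are exactly the block entries of $\B\odot^{\prime}\C$. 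Consequently $\operatorname{tr}(\B)\operatorname{tr}(\C)=\operatorname{tr}(\B\odot^{\prime}\C)=\chi(M\otimes N)$, the products being taken in $H$ and matching the multiplication that defines $\mathbb{Z}\Lambda$. Block-diagonalizing $\B\odot^{\prime}\C$ as in~(\ref{equationCD}) and using the trace identity $\operatorname{tr}(\B)\operatorname{tr}(\C)=\sum_{i}\operatorname{tr}(\E_i)$ together with the definition $B\cdot C=\sum_i E_i$ in $\mathbb{Z}\mathcal{S}$ then yields $\Phi(B\cdot C)=\sum_i\operatorname{tr}(\E_i)=\operatorname{tr}(\B)\operatorname{tr}(\C)=\Phi(B)\Phi(C)$.

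I expect this last step to be the main obstacle, since it is where one must tie together three a priori different descriptions of the product — the tensor product of comodules defining multiplication in $\mathbb{Z}\Lambda$, the purely matrix-theoretic block decomposition~(\ref{equationCD}) defining multiplication in $\mathbb{Z}\mathcal{S}$, and the algebra product in $H$ of the traces — by means of the identification of $\B\odot^{\prime}\C$ with a multiplicative matrix of $M\otimes N$. The remaining verifications (well-definedness, the basis bijection, the unit, and the anti-involution) all reduce to invoking the correspondence among simple comodules, their coefficient coalgebras, and their characters assembled in the preliminaries.
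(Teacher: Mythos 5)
Your proposal is correct and follows essentially the route the paper intends: the paper states this lemma without proof as a direct consequence of Lemma \ref{lem:ZS} together with the identifications $\chi(M)=\operatorname{tr}(\C(M))$, $\operatorname{tr}(\B)\operatorname{tr}(\C)=\sum_i\operatorname{tr}(\E_i)$, and $\chi(M^*)=S(\chi(M))$ assembled in the preliminaries, which is exactly the composite $\Phi\circ G$ you construct. Your explicit verification that $\B\odot^{\prime}\C$ is a multiplicative matrix for $M\otimes N$ supplies the one detail the paper leaves implicit.
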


\subsection{Matched pair of groups}\label{subsection2.3}
\begin{definition}\emph{(}\cite[Definition 2.1]{Tak81}\emph{)}
A matched pair of groups, i.e. a quadruple $(F, G,\triangleleft, \triangleright )$, is a pair of groups $(F, G)$ together with two group actions
\begin{eqnarray*}
&&G\times F\rightarrow F,\;\;(g, f)\mapsto g\triangleright f,\\
&&G\times F\rightarrow G,\;\;(g, f)\mapsto g\triangleleft f
\end{eqnarray*}
on the sets, satisfying the following conditions
 \begin{eqnarray*}
 g\triangleright(ff^\prime)=(g\triangleright f)((g\triangleleft f)\triangleright f^\prime),\;\;(gg^\prime)\triangleleft f=(g\triangleleft (g^\prime \triangleright f))(g^\prime\triangleleft f),
\end{eqnarray*}
for any $g, g^\prime\in G$, $f, f^\prime\in F$.
\end{definition}
The following lemma is proved from the definitions using direct computation.
\begin{lemma}\label{lem:actioninverse}
Let $(F, G,\triangleleft, \triangleright )$ be a matched pair of groups. Then for any $f\in F, g\in G$, the following properties hold:
\begin{itemize}
  \item [(1)]$1_G \triangleright f =f$, $1_G \triangleleft f= 1_G$;
  \item [(2)]$g\triangleright 1_F=1_F$, $g\triangleleft 1_F=g$;
  \item [(3)]$(g\triangleright f)^{-1}=(g\triangleleft f)\triangleright f^{-1}$;
  \item [(4)]$(g\triangleleft f)^{-1}=g^{-1}\triangleleft (g\triangleright f)$.
\end{itemize}
\end{lemma}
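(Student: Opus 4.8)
The plan is to treat the four assertions as consequences of the two structural ingredients built into the definition: the fact that $\triangleright$ is a left action of $G$ on the set $F$ and $\triangleleft$ is a right action of $F$ on the set $G$, together with the two compatibility identities $g\triangleright(ff')=(g\triangleright f)((g\triangleleft f)\triangleright f')$ and $(gg')\triangleleft f=(g\triangleleft(g'\triangleright f))(g'\triangleleft f)$. Two of the six identities are immediate: $1_G\triangleright f=f$ is exactly the unit axiom for the left action $\triangleright$, and $g\triangleleft 1_F=g$ is exactly the unit axiom for the right action $\triangleleft$. The remaining four will be bootstrapped from the compatibility relations by specializing their variables and then cancelling in the relevant group.

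First I would establish the two ``identity is fixed'' statements $1_G\triangleleft f=1_G$ and $g\triangleright 1_F=1_F$ by an idempotent argument. For the former, specialize the second compatibility identity to $g=g'=1_G$; since $1_G\triangleright f=f$, the right-hand side collapses to $(1_G\triangleleft f)(1_G\triangleleft f)$, so that $1_G\triangleleft f=(1_G\triangleleft f)^2$ in $G$, and cancelling one factor yields $1_G\triangleleft f=1_G$. Symmetrically, specializing the first compatibility identity to $f=f'=1_F$ and using $g\triangleleft 1_F=g$ gives $g\triangleright 1_F=(g\triangleright 1_F)^2$ in $F$, whence $g\triangleright 1_F=1_F$. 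These two facts must be proved before the inverse formulas, since the latter depend on them.

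With (1) and (2) in hand, the inverse formulas (3) and (4) follow by applying each compatibility identity to an inverse pair. For (3) I would set $f'=f^{-1}$ in the first compatibility identity; the left-hand side becomes $g\triangleright 1_F=1_F$ by part (2), so $1_F=(g\triangleright f)((g\triangleleft f)\triangleright f^{-1})$, and reading off the inverse in $F$ gives $(g\triangleright f)^{-1}=(g\triangleleft f)\triangleright f^{-1}$. For (4) I would instead use the second compatibility identity with the left factor $g^{-1}$ and the right factor $g$ (in that order), so that the left-hand side is $(g^{-1}g)\triangleleft f=1_G\triangleleft f=1_G$ by part (1); this yields $1_G=(g^{-1}\triangleleft(g\triangleright f))(g\triangleleft f)$, and reading off the inverse in $G$ gives $(g\triangleleft f)^{-1}=g^{-1}\triangleleft(g\triangleright f)$.

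There is no serious obstacle here; the computation is entirely formal. The only points requiring care are bookkeeping ones: because the compatibility relations are not symmetric in $F$ and $G$, one must substitute the correct variable into the correct slot (in particular, in (4) the roles of the two $G$-arguments must be assigned so that their product is $1_G$ rather than transposed), and one must respect the logical order, proving the unit-fixing identities (1)--(2) before invoking them in (3)--(4).
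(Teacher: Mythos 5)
Your proof is correct, and it is exactly the ``direct computation from the definitions'' that the paper invokes without writing out: the unit laws of the two actions give half of (1)--(2), the idempotent trick in $G$ and $F$ gives the other half, and specializing the compatibility identities at inverse pairs gives (3) and (4). Nothing further to compare, since the paper supplies no explicit argument.
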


Inspired by \cite{JM09, Orl24}, we use the following definitions.
\begin{definition}
Let $(F, G,\triangleleft, \triangleright )$ be a matched pair of groups. Then for any $g\in G, f\in F$, we define the followings:
\begin{itemize}
  \item [(1)]The orbits $O_{f}=\{g\triangleright f\mid g\in G\}$;
  \item [(2)]The stabilizers $G_{f}=\{g\in G\mid g\triangleright f=f\}$;
  \item [(3)]The sets $G_{f, f^{-1}}=\{g\in G\mid g\triangleright f=f^{-1}\}$;
  \item [(4)]A complete set $T_f$ of right coset representatives of $G_f$ in $G$. For convenience, we assume $1_G\in T_f.$
\end{itemize}
\end{definition}

Let $f\in F$ and consider $O_f$. Note that if $x\in O_f$, then $$x=g\triangleright f$$ for some $g\in G.$ It follows from lemma \ref{lem:actioninverse} that $$x^{-1}=(g\triangleright f)^{-1}=(g\triangleleft f)\triangleright f^{-1}.$$ This means that $x^{-1}\in O_{f^{-1}}.$
Using the same argument as in the proof of \cite[Lemma 4.2 and Corollary 4.3]{JM09}, we can easily carry out the proof of the following lemma.

\begin{lemma}\label{lem:Gff-1}
Let $(F, G,\triangleleft, \triangleright )$ be a matched pair of groups. Let $f\in F$ and consider $O_f$. Then the followings are equivalent:
\begin{itemize}
  \item [(1)]$f^{-1}\in O_f$, or $\mid G_{f, f^{-1}}\mid\neq 0 $;
  \item [(2)]$x^{-1}\in O_f$ for some $x\in O_f$;
  \item [(3)]$x^{-1}\in O_f$ for all $x\in O_f$.
\end{itemize}
\end{lemma}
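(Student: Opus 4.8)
The plan is to exploit two facts: that the sets $O_f$ are precisely the orbits of the group action $\triangleright$ of $G$ on the underlying set $F$, and hence form a partition of $F$ (so any two of them are either equal or disjoint); and the observation recorded just before the statement that inversion carries $O_f$ into $O_{f^{-1}}$. Concretely, if $x\in O_f$, write $x=g\triangleright f$ for some $g\in G$; then Lemma \ref{lem:actioninverse}(3) gives $x^{-1}=(g\triangleright f)^{-1}=(g\triangleleft f)\triangleright f^{-1}$, so $x^{-1}\in O_{f^{-1}}$ \emph{always}. I would first note that the two phrasings inside (1) coincide: $f^{-1}\in O_f$ means precisely that $g\triangleright f=f^{-1}$ for some $g\in G$, i.e.\ $G_{f,f^{-1}}\neq\emptyset$, so the disjunction in (1) is a single condition.

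With these preliminaries, I would establish the equivalence as a short cycle. For (1) $\Rightarrow$ (3): assuming $f^{-1}\in O_f$ and noting that trivially $f^{-1}\in O_{f^{-1}}$, the orbits $O_f$ and $O_{f^{-1}}$ share the point $f^{-1}$ and therefore coincide; then for any $x\in O_f=O_{f^{-1}}$ the inversion observation yields $x^{-1}\in O_f$, which is (3). The implication (3) $\Rightarrow$ (2) is immediate because $O_f$ is nonempty, containing $f=1_G\triangleright f$ by Lemma \ref{lem:actioninverse}(1). For (2) $\Rightarrow$ (1): if $x^{-1}\in O_f$ for some $x\in O_f$, then since also $x^{-1}\in O_{f^{-1}}$ by the inversion observation, the orbits $O_f$ and $O_{f^{-1}}$ meet at $x^{-1}$ and hence are equal, so in particular $f^{-1}\in O_{f^{-1}}=O_f$, giving (1).

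I expect the argument to be essentially routine; the only point requiring care is the direction of the inversion map. The natural temptation is to try to show $x^{-1}\in O_f$ directly, but in general inversion sends $O_f$ into $O_{f^{-1}}$, a possibly different orbit. The key device is therefore to combine this with the partition property: the presence of a single common element forces $O_f=O_{f^{-1}}$, after which all three statements collapse to the single condition $O_f=O_{f^{-1}}$. Thus the main (modest) obstacle is recognizing that one should prove equality of the two orbits rather than manipulate individual elements one at a time.
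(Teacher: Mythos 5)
Your proof is correct and follows essentially the same route as the paper, which records exactly your key observation (that inversion carries $O_f$ into $O_{f^{-1}}$ via Lemma \ref{lem:actioninverse}(3)) immediately before the statement and then defers to the orbit-partition argument of \cite[Lemma 4.2, Corollary 4.3]{JM09}. Your write-up simply makes that argument explicit, correctly reducing all three conditions to $O_f=O_{f^{-1}}$.
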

We end this subsection by giving the following lemma, which describes a property of multiplication of two orbits.
\begin{lemma}\label{lem:multiunion}
Let $(F, G,\triangleleft, \triangleright )$ be a matched pair of groups. Suppose $f, f^\prime\in F$ and $O_fO_{f^\prime}=\{xy\mid x\in O_f, y\in O_{f^\prime}\}$. Then there exists a subset $O_{f, f^\prime}$ of $F$ such that  $O_fO_{f^\prime}=\cup_{f^{\prime\prime}\in O_{f, f^\prime}} O_{f^{\prime\prime}}$ and the union is disjoint.
\end{lemma}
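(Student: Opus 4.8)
The plan is to show that the product set $O_f O_{f'}$ is a union of full $G$-orbits, so that it decomposes disjointly into orbits indexed by a suitable transversal $O_{f,f'}$. The key observation is that the left action $\triangleright$ of $G$ on $F$ does not preserve products in general, but the matched-pair compatibility relation $g\triangleright(ab)=(g\triangleright a)\bigl((g\triangleleft a)\triangleright b\bigr)$ lets one track how a product transforms. So the first step is to establish the following closure claim: if $w\in O_f O_{f'}$, then $O_w\subseteq O_f O_{f'}$; that is, the whole orbit of any element of the product set stays inside the product set.

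To prove this closure, I would take an arbitrary $w=xy$ with $x\in O_f$ and $y\in O_{f'}$, and an arbitrary $g\in G$, and compute $g\triangleright w=g\triangleright(xy)=(g\triangleright x)\bigl((g\triangleleft x)\triangleright y\bigr)$ using the matched-pair axiom. Now $g\triangleright x$ lies in $O_f$ (since $x\in O_f$ and $O_f$ is a $G$-orbit, it is closed under $\triangleright$), and $(g\triangleleft x)\triangleright y$ lies in $O_{f'}$ (since $g\triangleleft x\in G$ and $y\in O_{f'}$). Hence $g\triangleright w$ is again a product of an element of $O_f$ and an element of $O_{f'}$, so $g\triangleright w\in O_f O_{f'}$. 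As $g$ was arbitrary, $O_w\subseteq O_f O_{f'}$, which is exactly the claim.

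Granting the closure claim, the decomposition follows formally: the $G$-orbits partition $F$, so $O_f O_{f'}$, being a union of some of these equivalence classes (each element drags its entire orbit in with it, by the claim), is a disjoint union of orbits. I would then define $O_{f,f'}$ to be any set consisting of exactly one representative $f''$ from each orbit meeting $O_f O_{f'}$; then $O_f O_{f'}=\bigcup_{f''\in O_{f,f'}}O_{f''}$ and distinct $f''$ give disjoint orbits by the partition property, giving the desired statement.

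The only genuine content is the single application of the matched-pair axiom in the closure step, and the one point to handle with care is the direction of the identity: one must use the axiom $g\triangleright(ab)=(g\triangleright a)\bigl((g\triangleleft a)\triangleright b\bigr)$ rather than a naive $g\triangleright(ab)=(g\triangleright a)(g\triangleright b)$, which is false in general. Everything else is the standard fact that a subset of a $G$-set closed under the action is a union of orbits, so I do not expect any serious obstacle; the main thing to verify is simply that $g\triangleleft x\in G$ so that $(g\triangleleft x)\triangleright y$ is a legitimate element of $O_{f'}$.
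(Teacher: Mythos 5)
Your proposal is correct and follows essentially the same route as the paper: both apply the matched-pair axiom $g\triangleright(xy)=(g\triangleright x)\bigl((g\triangleleft x)\triangleright y\bigr)$ to show $O_fO_{f'}$ is closed under the $G$-action, and then conclude it is a disjoint union of orbits by choosing representatives. No issues.
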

\begin{proof}
For any $g\in G$, $x\in O_f$ and $y\in O_{f^\prime}$, we have
\begin{eqnarray*}
g\triangleright (xy)=(g\triangleright x)((g\triangleleft x)\triangleright y)\in O_f O_{f^\prime}.
\end{eqnarray*}
This means that $$O_{f^{\prime\prime}}\subseteq O_fO_{f^\prime}$$ for any $f^{\prime\prime}\in O_fO_{f^\prime}$. Thus there exists a subset $O_{f, f^\prime}$ of $F$ such that  $$O_fO_{f^\prime}=\cup_{f^{\prime\prime}\in O_{f, f^\prime}} O_{f^{\prime\prime}}$$ and the union is disjoint.
\end{proof}

\section{The Hopf algebra $\Bbbk^G{}^\tau\#_{\sigma}\Bbbk F$}\label{section3}
\subsection{Abelian extensions}
In order to describe the extensions in which we are interested, we first consider arbitrary extensions of Hopf algebras.
\begin{definition}\emph{(}\cite{AD95}\emph{)}
A short exact sequence of Hopf algebras is a sequence of Hopf algebras
and Hopf algebra maps
\begin{eqnarray}\label{ext}
\;\; K\xrightarrow{\iota} H \xrightarrow{\pi} A
\end{eqnarray}
such that
\begin{itemize}
  \item[(i)] $\iota$ is injective,
  \item[(ii)]  $\pi$ is surjective,
  \item[(iii)] $\ker(\pi)= HK^+$, $K^+$ is the kernel of the counit of $K$,
  \item[(iv)] $K=\{x\in H \mid (\pi\otimes \id)\Delta (x)=1\otimes x\}$.
\end{itemize}
\end{definition}
An extension \eqref{ext} above such that $K$ is commutative and $A$ is cocommutative is called abelian. Let $G, F$ be finite groups and $\k^G$ denote the dual Hopf algebra of $\k G$.
Abelian extensions $$\k^G\xrightarrow{\iota} H\xrightarrow{\pi} \k F$$ of $\k F$ by $\k^G$
were classified by Masuoka (\cite[Proposition 1.5]{Mas02}), and the above $H$ can be expressed as $\k^G\#_{\sigma, \tau}\k F$.

In this section, we will remove the condition that $F$ is finite and attempt to construct a class of infinite-dimensional Hopf algebras through abelian extensions of $\k F$ by $\k^G$.

In the following part, let $G$ be a finite group
and $\{p_g\mid g\in G\}$ be the basis for $\k^G$ dual to the basis of group elements in $\k G$. Note that we have $1_{\k^G}=\sum_{g\in G}p_g$.

Let $\triangleleft :G\times F\rightarrow G$ be an action on the set $G$, which corresponds to an action $\rightharpoonup: \k F\times \k^G\rightarrow \k^G$ so that $$f\rightharpoonup p_g=p_{g\triangleleft f^{-1}},$$ where $f\in F$ and $g\in G.$ We have $\rightharpoonup$ is a weak action, that is,
\begin{eqnarray*}
&&f\rightharpoonup (p_gp_{g^\prime})=(f\rightharpoonup p_g)(f\rightharpoonup p_{g^\prime})=\delta_{g, g^\prime}p_{g\triangleleft f^{-1}},\\
&&f\rightharpoonup (\sum_{g\in G}p_g)=\sum_{g\in G}p_g,\\
&&1_F\rightharpoonup p_g=p_g,
\end{eqnarray*}
for any $f\in F, g, g^\prime\in G$.

Suppose we are given an action $\triangleright:G\times F\rightarrow F$, which corresponds to a coaction $\rho: \k F\rightarrow \k F\otimes \k^G$ such that
$$\rho(f)=\sum_{g\in G}(g\triangleright f)\otimes p_g.$$ In fact, $\rho$ is a weak coaction, that is, the following conditions are fulfilled:
\begin{eqnarray*}
&&(\Delta\otimes \operatorname{id})\rho(f)=m^{24}(\rho\otimes \rho)\Delta(f)=\sum_{g\in G}(g\triangleright f)\otimes (g\triangleright f)\otimes p_g,\\
&&(\varepsilon\otimes\id)\rho (f)=\varepsilon(f)\otimes 1_{\k^G},\\
&&(\id\otimes\varepsilon)\rho(f)=f,
\end{eqnarray*}
for any $f\in F,$ where $m^{24}:\k F\otimes \k^G\otimes \k F\otimes \k^G \rightarrow \k F \otimes \k F\otimes \k^G$ is the linear map $f\otimes p_{g}\otimes f^{\prime}\otimes p_{g^\prime}  \mapsto f\otimes f^\prime\otimes p_{g}p_{g^\prime}.$

Suppose $\sigma:\k F \times \k F \rightarrow \k^G$ is a bilinear map such that
\begin{eqnarray*}
\sigma(1_F, f)=\sigma(f, 1_F)=1_{\k^G},
\end{eqnarray*}
\begin{eqnarray*}
\varepsilon\sigma=\varepsilon\otimes \varepsilon,
\end{eqnarray*}
and
\begin{eqnarray*}
(f\rightharpoonup \sigma(f^\prime, f^{\prime\prime}))\sigma(f, f^\prime f^{\prime\prime})=\sigma(f, f^\prime)\sigma(ff^\prime, f^{\prime\prime}),
\end{eqnarray*}
for any $f, f^\prime, f^{\prime\prime}\in F.$
We can identify $\sigma$ naturally with the map $G\times F\times F\rightarrow \k^{\times}, (g, f, f^\prime)\mapsto \sigma(f, f^\prime)(g),$ and denote the last valve by $\sigma(g; f, f^\prime)$. A direct computation follows that
\begin{eqnarray}\label{eq:cocycle1}
\sigma(g; 1_F, f)=\sigma(g; f, 1_F)=\sigma(1_G; f, f^\prime)=1,
\end{eqnarray}
and
\begin{eqnarray}\label{eq:cocycle2}
\sigma(g\triangleleft f; f^\prime, f^{\prime\prime})\sigma(g; f, f^\prime f^{\prime\prime})=\sigma(g; f, f^\prime)\sigma(g; ff^\prime, f^{\prime\prime}),
\end{eqnarray}
for any $g\in G$ and $f, f^\prime ,f^{\prime \prime}\in F.$
Since $\k^G$ is commutative, it follows that $\k^G$ is a twist $\k F$-module, that is, $$1_F\rightharpoonup p_g=p_g,$$ and $$(f\rightharpoonup (f^\prime\rightharpoonup p_g))\sigma(f, f^\prime)=\sigma(f, f^\prime)(ff^\prime\rightharpoonup p_g),$$ for any $f, f^\prime\in F$ and $g\in G.$

From \cite[Proposition 2.6]{AD95}, the construction of crossed product makes $\k^G\otimes \k F$ into an algebra with unit
$1_{\k^G}\otimes 1_F
$, whose product is given by
\begin{eqnarray}\label{eq:multi}
(p_g\# f)(p_{g^\prime}\# f^\prime)=p_g(f\rightharpoonup p_{g^\prime})\sigma(f, f^\prime)\#ff^\prime
=\delta_{g\triangleleft f, g^\prime}\sigma(g; f, f^\prime) p_{g}\#ff^\prime,
\end{eqnarray}
where $g, g^\prime\in G$ and $f, f^\prime \in F.$

Besides, suppose $\tau:\k F\rightarrow \k^G\otimes \k^G$ is a linear map satisfying
\begin{eqnarray*}
(\varepsilon\otimes \id)\tau(f)=(\id\otimes \varepsilon)\tau(f)=1_{\k^G},
\end{eqnarray*}
\begin{eqnarray*}
\tau(1_F)=1_{\k^G}\otimes 1_{\k^G},
\end{eqnarray*}
and the co-cocycle condition
\begin{eqnarray*}
\sum_{g\in G} (f_i)_{(1)}(g\triangleright f)_j\otimes (f_{i})_{(2)}(g\triangleright f)^j\otimes f^ip_g= f_k\otimes (f^k)_{(1)}f_l\otimes (f^k)_{(2)}f^l
\end{eqnarray*}
for any $f\in F$, where $\Delta(h)=h_{(1)}\otimes h_{(2)}$ and $\tau(h)=h_i\otimes h^i$ for any $h\in \k^G.$ It is straightforward to show that $\k F$ is a twisted $\k^G$-comodule, that is,
\begin{eqnarray*}
(\id\otimes m_{{(\k^G)}^{\otimes 2}})(\id\otimes \Delta\otimes \id\otimes \id)(\rho\otimes \tau)\Delta(f)&=&m^{13}_{{(\k^G)}^{\otimes 2}}(\id\otimes \id\otimes\rho\otimes \id)(\tau\otimes \rho)\Delta(f)\\
&=&\sum_{g, g^\prime\in G}(g^{\prime}g\triangleright f)\otimes f_i p_{g^\prime}\otimes f^{i}p_g
\end{eqnarray*}
for any $f\in F$, where $m^{13}_{{(\k^G)}^{\otimes 2}}:\k^G\otimes \k^G\otimes \k F\otimes \k^G\otimes \k^G \rightarrow \k F\otimes \k^G\otimes \k^G$ sends $h\otimes k\otimes f\otimes h^\prime\otimes k^\prime\mapsto f\otimes hh^\prime \otimes kk^\prime.$
We can identify $\tau$ naturally with the map $G\times G\times F\rightarrow \k^{\times}, (g, g^\prime, f)\mapsto \tau(f)(g\otimes g^\prime),$ and denote the last valve by $\tau(g, g^\prime; f)$. We know that
\begin{eqnarray}\label{eq:co-cocycle1}
\tau(1_G, g; f)=\tau(g, 1_G; f)=\tau(g, g^\prime; 1_F)=1,
\end{eqnarray}
\begin{eqnarray}\label{eq:co-cocycle2}
\tau(g, g^\prime; g^{\prime\prime}\triangleright f)\tau(gg^\prime, g^{\prime\prime}; f)=\tau(g, g^\prime g^{\prime\prime}; f)\tau(g^\prime, g^{\prime\prime}; f),
\end{eqnarray}
for any $g, g^\prime, g^{\prime\prime}\in G$ and $f\in F.$

By \cite[Proposition 2.16]{AD95}, the construction of crossed coproduct makes $\k^G\otimes \k F$ into a coalgebra with comultiplication
\begin{eqnarray}\label{eq:comult}
\Delta(p_g\# f)&=&\sum_{x,y\in G}(\tau(f)_i p_{gx^{-1}}\#(y\triangleright f))\otimes (p_x\tau(f)^ip_y\# f)\\
&=&\sum_{x\in G}(\tau(gx^{-1}, x; f) p_{gx^{-1}} \#(x\triangleright f))\otimes ( p_x\# f),
\end{eqnarray}
and counit
\begin{eqnarray}\label{eq:counit}
\varepsilon(p_g\# f)=\varepsilon(p_g)\varepsilon(f)=\delta_{g,1_G},\end{eqnarray}
where $g\in G$ and $f \in F.$

According to \cite[Theorem 2.20]{AD95} (also \cite[Proposition 3.1.12]{And96}), we can now obtain the following proposition immediately.
\begin{proposition}\label{prop:Hopf}
Let $F$ be a group and $G$ be a finite group. Let us also fix a map $\sigma: \k G\times \k F\times \k F\rightarrow \k^{\times}$ and a map $\tau :\k G\times \k G\times \k F\rightarrow \k^{\times}$. Let $\k^G{}^\tau\#_{\sigma}\k F$ denote the vector space $\k^G\otimes \k F$ provided with the unit $ 1_{\k^G}\otimes  1_F$, the multiplication (\ref{eq:multi}), the comultiplication (\ref{eq:comult}) and the counit (\ref{eq:counit}). Then $\k^G{}^\tau\#_{\sigma}\k F$ is a bialgebra if and only if the following conditions hold:
\begin{itemize}
  \item [(i)]$(F, G,\triangleleft, \triangleright )$ is a matched pair of groups;
  \item [(ii)]$\sigma$ satisfies the conditions (\ref{eq:cocycle1}) and (\ref{eq:cocycle2});
   \item[(iii)]$\tau$ satisfies the conditions (\ref{eq:co-cocycle1}) and (\ref{eq:co-cocycle2});
  \item [(iv)]$\sigma$ and $\tau$ satisfy the following compatible condition
  \begin{eqnarray*}
  &&\sigma(gg^\prime; f, f^\prime)\tau(g, g^\prime; ff^\prime)\\
  &=&\sigma(g; g^\prime \triangleright f, (g^\prime \triangleleft f)\triangleright f^\prime)\sigma(g^\prime; f, f^\prime)
  \tau(g, g^\prime; f)\tau(g\triangleleft (g^\prime \triangleright f), g^\prime\triangleleft f; f^\prime),
  \end{eqnarray*}
  for any $g, g^\prime, g^{\prime\prime}\in G$ and $f, f^\prime, f^{\prime\prime}\in F$.
\end{itemize}
In this case, $\k^G{}^\tau\#_{\sigma}\k F$ is a Hopf algebra with the antipode given by
$$
S(p_g\# f)=\sigma(g^{-1}; g\triangleright f, (g\triangleright f)^{-1})^{-1}\tau(g^{-1}, g; f)^{-1}p_{(g\triangleleft f)^{-1}}\# (g\triangleright f)^{-1}
$$
for any $g\in G$ and $f\in F.$
Moreover, let $\iota$ be the linear map $p_g \mapsto p_g\otimes 1_F$ and $\pi$ be the linear map $p_g \otimes f \mapsto \varepsilon(p_g)f$. Then
$$
\k^G \xrightarrow{\iota} \k^G{}^\tau\#_{\sigma}\k F \xrightarrow{\pi} \k F
$$
is an exact sequence of Hopf algebras.
\end{proposition}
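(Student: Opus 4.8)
The plan is to obtain the statement as a specialization of the general crossed product--crossed coproduct construction, applying \cite[Theorem 2.20]{AD95} with $K=\k^G$ and $A=\k F$, so that the substantive work is to rewrite the abstract compatibility axioms as the explicit group-theoretic conditions (i)--(iv). The discussion preceding the statement already records that $\rightharpoonup$ is a weak action and $\rho$ a weak coaction, and that $\k^G$ is a twisted $\k F$-module while $\k F$ is a twisted $\k^G$-comodule. Given this, the crossed product multiplication (\ref{eq:multi}) is associative with unit $1_{\k^G}\otimes 1_F$ exactly when $\sigma$ obeys (\ref{eq:cocycle1}) and (\ref{eq:cocycle2}), which is condition (ii); dually, the crossed coproduct (\ref{eq:comult}) is coassociative with counit (\ref{eq:counit}) exactly when $\tau$ obeys (\ref{eq:co-cocycle1}) and (\ref{eq:co-cocycle2}), which is condition (iii). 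Compatibility of unit with counit is immediate from (\ref{eq:counit}), so the whole bialgebra question collapses to the single axiom that $\Delta$ be an algebra homomorphism.

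For that axiom I would verify $\Delta\big((p_g\#f)(p_{g'}\#f')\big)=\Delta(p_g\#f)\,\Delta(p_{g'}\#f')$ on the basis $\{p_g\#f\}$, expanding the left-hand side by applying (\ref{eq:multi}) and then (\ref{eq:comult}), expanding the right-hand side by applying (\ref{eq:comult}) and then (\ref{eq:multi}), and comparing the coefficient of each basis tensor $(p_a\#u)\otimes(p_b\#v)$. Matching the group data $u,v\in F$ and $a,b\in G$ on the two sides forces the two actions to obey $g\triangleright(ff')=(g\triangleright f)\big((g\triangleleft f)\triangleright f'\big)$ and $(gg')\triangleleft f=\big(g\triangleleft(g'\triangleright f)\big)(g'\triangleleft f)$, i.e. condition (i); the point is that the first leg of $\Delta$ intertwines the $\triangleright$ coming from $\rho$ with the $\triangleleft$ coming from $\rightharpoonup$, and only these identities make the two expansions share the same support. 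Once the supports agree, equating the scalar prefactors, which are monomials in the values of $\sigma$ and $\tau$, yields exactly the compatibility condition (iv); reading the computation backwards gives the converse, hence the biconditional. \emph{This coefficient comparison is the main obstacle}: it is a bookkeeping-heavy cocycle identity in which the arguments of $\sigma$ and $\tau$ are entangled through $\triangleleft$ and $\triangleright$, and keeping the $G$- and $F$-indices aligned is where all the care lies.

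Granting the bialgebra structure, I would confirm it is a Hopf algebra by checking that the displayed $S$ is a two-sided convolution inverse of the identity. Since $G$ is finite and $\k F$ carries the group-algebra antipode, it is enough to verify $\sum S\big((p_g\#f)_{(1)}\big)(p_g\#f)_{(2)}=\varepsilon(p_g\#f)\,1_{\k^G}\otimes 1_F$ together with its mirror on the basis; after substituting (\ref{eq:comult}) and (\ref{eq:multi}) these reduce, through the relations (\ref{eq:cocycle2}) and (\ref{eq:co-cocycle2}) and the inversion formulas of Lemma \ref{lem:actioninverse}, to the required normalization. The given form of $S$ is precisely the specialization of the antipode in \cite[Theorem 2.20]{AD95}, which one may instead quote.

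For the exact sequence I would first check that $\iota\colon p_g\mapsto p_g\otimes 1_F$ and $\pi\colon p_g\otimes f\mapsto \varepsilon(p_g)f=\delta_{g,1_G}f$ are Hopf algebra maps, which is routine from the structure maps, and that $\iota$ is injective and $\pi$ surjective, which is clear. Two defining conditions then remain. For $\ker\pi=H(\k^G)^+$: as $\varepsilon(p_g)=\delta_{g,1_G}$, the augmentation ideal $(\k^G)^+$ is spanned by the $p_g$ with $g\neq 1_G$, and left multiplication by an arbitrary $p_{g'}\#f'$ through (\ref{eq:multi}) produces every $p_{g'}\#f'$ with $g'\triangleleft f'\neq 1_G$, equivalently (using $1_G\triangleleft f=1_G$ from Lemma \ref{lem:actioninverse}) every $p_{g'}\#f'$ with $g'\neq 1_G$, which is exactly $\ker\pi$. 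For $\k^G=\{x\in H\mid(\pi\otimes\id)\Delta(x)=1\otimes x\}$: applying $\pi$ to the first leg of (\ref{eq:comult}) annihilates every term except $x=g$, leaving $(\pi\otimes\id)\Delta(p_g\#f)=(g\triangleright f)\otimes(p_g\#f)$ by (\ref{eq:co-cocycle1}); since the $p_g\#f$ are linearly independent, this equals $1_F\otimes(p_g\#f)$ precisely when $g\triangleright f=1_F$, that is when $f=1_F$, so the fixed space is $\iota(\k^G)$. These closing verifications are routine once the bialgebra identity of the second step is in place.
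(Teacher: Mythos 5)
Your proposal is correct and follows essentially the same route as the paper, which obtains the proposition directly as a specialization of the crossed product/crossed coproduct construction of \cite[Theorem 2.20]{AD95} (and \cite[Proposition 3.1.12]{And96}) after the preceding discussion has identified the weak (co)action, cocycle and co-cocycle data; the paper in fact gives no further argument beyond that citation. Your additional verifications (the coefficient comparison for $\Delta$ being an algebra map, the antipode check, and the two exactness conditions $\ker\pi=H(\k^G)^+$ and the coinvariance characterization of $\iota(\k^G)$) are accurate and simply make explicit what the paper leaves to the cited references.
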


\begin{corollary}\label{coro:S2=1}
Let $H=\k^G{}^\tau\#_{\sigma}\k F$ be the Hopf algebra appeared in Proposition \ref{prop:Hopf}. Then $S^2=\id.$
\end{corollary}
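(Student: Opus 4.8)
The plan is to compute $S^{2}(p_g\#f)$ head-on by applying the antipode formula of Proposition \ref{prop:Hopf} twice, and to show separately that the group-theoretic index pair returns to $(g,f)$ and that the accompanying scalar returns to $1$. Writing $S(p_g\#f)=\alpha\, p_{(g\triangleleft f)^{-1}}\#(g\triangleright f)^{-1}$ with $\alpha=\sigma(g^{-1}; g\triangleright f,(g\triangleright f)^{-1})^{-1}\tau(g^{-1},g;f)^{-1}\in\k^{\times}$, I would then apply $S$ to $p_{g'}\#f'$ where $g'=(g\triangleleft f)^{-1}$ and $f'=(g\triangleright f)^{-1}$, producing a second scalar $\beta$ and a new index pair. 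The whole statement will reduce to the identity $\alpha\beta=1$.

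First I would check that the index pair comes back. Using Lemma \ref{lem:actioninverse}(3) to rewrite $f'=(g\triangleleft f)\triangleright f^{-1}$ and the fact that $\triangleright$ is a genuine left action, one gets $g'\triangleright f'=\bigl((g\triangleleft f)^{-1}(g\triangleleft f)\bigr)\triangleright f^{-1}=1_G\triangleright f^{-1}=f^{-1}$, hence $(g'\triangleright f')^{-1}=f$. Similarly, rewriting $(g\triangleleft f)^{-1}=g^{-1}\triangleleft(g\triangleright f)$ by Lemma \ref{lem:actioninverse}(4) and using that $\triangleleft$ is a right action gives $g'\triangleleft f'=g^{-1}\triangleleft 1_F=g^{-1}$, hence $(g'\triangleleft f')^{-1}=g$. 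Thus $S^{2}(p_g\#f)=\alpha\beta\,(p_g\#f)$, and everything rests on the scalar identity, which after unravelling $\beta$ reads
\begin{equation*}
\sigma(g^{-1}; g\triangleright f,(g\triangleright f)^{-1})\,\tau(g^{-1},g;f)\,\sigma(g\triangleleft f; f^{-1},f)\,\tau\bigl(g\triangleleft f,(g\triangleleft f)^{-1};(g\triangleright f)^{-1}\bigr)=1. \tag{$\star$}
\end{equation*}

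I would attack $(\star)$ by first collapsing its two outer factors using the normalized cocycle conditions. Specializing the $\sigma$-cocycle \eqref{eq:cocycle2} at $(f',f'')=(f^{-1},f)$ and applying \eqref{eq:cocycle1} yields $\sigma(g\triangleleft f; f^{-1},f)=\sigma(g; f,f^{-1})$. Setting $h=g\triangleleft f$ and specializing the $\tau$-co-cocycle \eqref{eq:co-cocycle2} at $(g,g',g'')=(h,h^{-1},h)$ with base point $f^{-1}$, together with \eqref{eq:co-cocycle1} and the identity $(g\triangleright f)^{-1}=h\triangleright f^{-1}$ (again Lemma \ref{lem:actioninverse}(3)), yields $\tau(h,h^{-1};h\triangleright f^{-1})=\tau(h^{-1},h;f^{-1})$. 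After these two substitutions $(\star)$ becomes
\begin{equation*}
\sigma(g^{-1}; g\triangleright f,(g\triangleright f)^{-1})\,\tau(g^{-1},g;f)\,\sigma(g; f,f^{-1})\,\tau\bigl((g\triangleleft f)^{-1},g\triangleleft f;f^{-1}\bigr)=1.
\end{equation*}

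Finally I would produce exactly this last relation from the compatibility condition (iv) of Proposition \ref{prop:Hopf}. Substituting $(g,g')\mapsto(g^{-1},g)$ into (iv) collapses the left-hand $\sigma$-factor to $\sigma(1_G;-,-)=1$; rewriting $g^{-1}\triangleleft(g\triangleright f)=(g\triangleleft f)^{-1}$ via Lemma \ref{lem:actioninverse}(4) and then putting $f'=f^{-1}$ (so that $ff'=1_F$ kills the remaining left-hand $\tau$-factor through \eqref{eq:co-cocycle1}, while $(g\triangleleft f)\triangleright f^{-1}=(g\triangleright f)^{-1}$ by Lemma \ref{lem:actioninverse}(3)) gives precisely the displayed identity, whence $\alpha\beta=1$ and $S^{2}=\id$. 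The main obstacle is entirely bookkeeping: keeping the four $2$-cocycle and $2$-co-cocycle factors correctly aligned and choosing the right specializations of \eqref{eq:cocycle2}, \eqref{eq:co-cocycle2} and condition (iv). Once one observes that the index pair returns to $(g,f)$, there is no conceptual difficulty, only the care needed to match the scalar arguments.
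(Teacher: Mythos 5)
Your proposal is correct and follows essentially the same route as the paper's own proof: you reduce $S^2(p_g\#f)=\alpha\beta\,(p_g\#f)$ to the scalar identity $(\star)$ via Lemma \ref{lem:actioninverse}(3)--(4), and then verify $(\star)$ by exactly the same three specializations — of \eqref{eq:cocycle2}, of \eqref{eq:co-cocycle2} at $(h,h^{-1},h)$ with base point $f^{-1}$, and of condition (iv) at $(g^{-1},g;f,f^{-1})$. The bookkeeping in all steps checks out.
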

\begin{proof}
For any $x\in G, y\in F$, it follows from Lemma \ref{lem:actioninverse} that
\begin{eqnarray*}
(x\triangleleft y)^{-1}\triangleleft (x\triangleright y)^{-1}&=&(x^{-1}\triangleleft (x\triangleright y))\triangleleft (x\triangleright y)^{-1}\\
&=&x^{-1}
\end{eqnarray*}
and
\begin{eqnarray*}
(x\triangleleft y)^{-1}\triangleright(x\triangleright y)^{-1}&=&(x\triangleleft y)^{-1}\triangleright( (x\triangleleft y)\triangleright y^{-1})\\
&=&y^{-1}.
\end{eqnarray*}
Direct computation shows that
\begin{eqnarray*}
S^2(p_x\#y)&=&S(\sigma(x^{-1}; x\triangleright y, (x\triangleright y)^{-1})^{-1}\tau(x^{-1}, x; y)^{-1}p_{(x\triangleleft y)^{-1}}\# (x\triangleright y)^{-1})\\
&=&\sigma(x^{-1}; x\triangleright y, (x\triangleright y)^{-1})^{-1}\tau(x^{-1}, x; y)^{-1}\sigma(x\triangleleft y;y^{-1}, y)^{-1}\\
&&\tau(x\triangleleft y, (x\triangleleft y)^{-1};(x\triangleright y)^{-1})^{-1}p_x\#y.
\end{eqnarray*}
Using (\ref{eq:cocycle2}), one can show that $$\sigma(x\triangleleft y;y^{-1}, y)=\sigma(x; y, y^{-1}).$$
Substitute $x\triangleleft y$ for $g$, $(x\triangleleft y)^{-1}$ for $g^\prime$, $x\triangleleft y$ for $g^{\prime\prime}$ and $y^{-1}$ for $f$ in formular (\ref{eq:co-cocycle2}), we have $$\tau(x\triangleleft y, (x\triangleleft y)^{-1};(x\triangleright y)^{-1})=\tau((x\triangleleft y)^{-1}, x\triangleleft y; y^{-1}). $$
Combining the compatible condition $(iv)$ in Proposition \ref{prop:Hopf} and Lemma \ref{lem:actioninverse}, we know that
\begin{eqnarray*}
1&=&\sigma(x^{-1}; x\triangleright y, (x\triangleleft y)\triangleright y^{-1})\sigma(x;y, y^{-1})\tau(x^{-1}, x;y)
\tau(x^{-1}\triangleleft(x\triangleright y), x\triangleleft y; y^{-1})\\
&=&\sigma(x^{-1}; x\triangleright y, (x\triangleright y)^{-1})\sigma(x;y, y^{-1})\tau(x^{-1}, x;y)\tau((x\triangleleft y)^{-1}, x\triangleleft y; y^{-1}).
 \end{eqnarray*}
It follows that
\begin{eqnarray*}
S^2(p_x\#y)&=&\sigma(x^{-1}; x\triangleright y, (x\triangleright y)^{-1})^{-1}\tau(x^{-1}, x; y)^{-1}\sigma(x\triangleleft y;y^{-1}, y)^{-1}\\
&&\tau(x\triangleleft y, (x\triangleleft y)^{-1};(x\triangleright y)^{-1})^{-1}p_x\#y\\
&=&\sigma(x^{-1}; x\triangleright y, (x\triangleright y)^{-1})^{-1}\tau(x^{-1}, x; y)^{-1}\sigma(x; y, y^{-1})^{-1}\\
&&\tau((x\triangleleft y)^{-1}, x\triangleleft y; y^{-1})^{-1}p_x\#y\\
&=&p_x\#y
\end{eqnarray*}
for any $x\in G, y\in F$ and thus $$S^2=\id.$$
\end{proof}
Let $A, B$ be algebras, and $H$ a Hopf algebra. Recall that $A\subset B$ is a (right) $H$-\textit{extension} if $B$ is a right $H$-comodule algebra with $B^{co H}=A.$ The $H$-extension $A\subset B$ is $H$-\textit{cleft} if there exists a right $H$-comodule map $\chi:H\rightarrow B$ which is (convolution) invertible. The extension $B^{co H}\subset B$ is right $H$-\textit{Galois} if the map $\beta: B\otimes_{B^{co H}} B\rightarrow B\otimes H$, given by $a\otimes b\mapsto (a\otimes 1)\rho(b)$, is bijective. See \cite{Mon93}.

\begin{remark}\rm
According to \cite[Theorem 11]{DT86}, the Hopf algebra $\k^G{}^\tau\#_{\sigma}\k F$ in Proposition \ref{prop:Hopf} is a cleft $\k F$-extension of $\k^G$. Using \cite[Theorem 9]{DT86}, we know that the extension $\k^G\subset\k^G{}^\tau\#_{\sigma}\k F$ is right $\k F$-Galois.
\end{remark}

In the following part, let $\k^G{}^\tau\#_{\sigma}\k F$ be the Hopf algebra defined in Proposition \ref{prop:Hopf} with the described algebra and coalgebra structures.
Next we will show the cosemisimplicity of $\k^G{}^\tau\#_{\sigma}\k F$. Before that, let us recall the concept of left or right integrals.

Let $H$ be a Hopf algebra. Recall that an element $T\in H^*$ is a left or right \textit{integral} on $H$ if $\langle T, h\rangle1_H=h_{(1)}\langle T, h_{(2)}\rangle ,$ or $\langle T, h\rangle1_H=\langle T, h_{(1)}\rangle h_{(2)},$ for any $h\in H.$
It is well-known that $H$ is cosemisimple if and only if there exists a left (or right) integral $T$ on $H$ satisfying $\langle T, 1_H\rangle=1$ (see \cite[Theorem 2.4.6]{Mon93}).

It is direct to see the following proposition.
\begin{proposition}\label{prop:cosemisimple}
The Hopf algebra $\k^G{}^\tau\#_{\sigma}\k F$ defined in Proposition \ref{prop:Hopf} is cosemisimple.
\end{proposition}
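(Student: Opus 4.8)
The plan is to invoke the integral criterion for cosemisimplicity recalled just above the statement: it suffices to exhibit a left integral $T\in H^*$ on $H=\k^G{}^\tau\#_{\sigma}\k F$ satisfying $\langle T, 1_H\rangle=1$ (\cite[Theorem 2.4.6]{Mon93}). Since $\k$ has characteristic $0$ and $G$ is finite, $|G|$ is invertible, so the natural candidate combines the normalized Haar integral $p_g\mapsto 1/|G|$ on the commutative part $\k^G$ with the integral on $\k F$ that detects the coefficient of $1_F$. Concretely, I would define $T$ on the basis $\{p_g\#f\}$ of $H$ by
$$\langle T, p_g\#f\rangle = \frac{1}{|G|}\,\delta_{f, 1_F}, \qquad g\in G,\ f\in F,$$
independent of $g$, and then check the left integral identity $\langle T, h\rangle 1_H = h_{(1)}\langle T, h_{(2)}\rangle$ directly from the comultiplication formula (\ref{eq:comult}).

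The verification evaluates $h_{(1)}\langle T, h_{(2)}\rangle$ on a basis element $h=p_g\#f$. Because $\langle T, p_x\#f\rangle=\tfrac{1}{|G|}\delta_{f,1_F}$ does not depend on the summation index $x$, it factors out of (\ref{eq:comult}), giving
$$h_{(1)}\langle T, h_{(2)}\rangle = \frac{1}{|G|}\,\delta_{f,1_F}\sum_{x\in G}\tau(gx^{-1}, x; f)\,\bigl(p_{gx^{-1}}\#(x\triangleright f)\bigr).$$
If $f\neq 1_F$ the factor $\delta_{f,1_F}$ annihilates the right-hand side, matching $\langle T, p_g\#f\rangle 1_H=0$. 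If $f=1_F$, then $x\triangleright 1_F=1_F$ by Lemma \ref{lem:actioninverse}(2) and $\tau(gx^{-1}, x; 1_F)=1$ by (\ref{eq:co-cocycle1}), so the sum collapses to $\tfrac{1}{|G|}\sum_{x\in G} p_{gx^{-1}}\#1_F=\tfrac{1}{|G|}1_H$, which is exactly $\langle T, p_g\#1_F\rangle 1_H$. Hence $T$ is a left integral, and the normalization is immediate from $1_H=\sum_{g\in G}p_g\#1_F$, yielding $\langle T, 1_H\rangle=|G|\cdot\tfrac{1}{|G|}=1$; cosemisimplicity then follows from the criterion.

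There is no genuine obstacle here, which is why the statement may be called direct, but the one point deserving care is to see that the twisting data do not interfere with the integral. The cocycle $\sigma$ never enters, since $\Delta$ involves only $\tau$, and on the sole surviving terms $p_g\#1_F$ the cocycle $\tau$ trivializes and $\triangleright$ fixes $1_F$, so $\Delta(p_g\#1_F)$ reduces to the comultiplication of the cosemisimple Hopf algebra $\k^G$; in other words $T$ restricts along $\iota$ to the Haar integral of $\k^G$ and vanishes on the span of the $p_g\#f$ with $f\neq 1_F$. A symmetric choice produces a right integral equally well, and one could alternatively deduce the result conceptually from the cosemisimplicity of $\k^G$ and $\k F$ together with the Andruskiewitsch--Cuadra extension theorem (\cite[Theorem 2.13]{AC13}), but the explicit integral above is self-contained and shortest.
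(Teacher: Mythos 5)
Your proposal is correct and coincides with the paper's own proof: the authors define exactly the same functional $\langle T, p_g\#f\rangle=\tfrac{1}{|G|}\delta_{1_F,f}$, verify the left-integral identity from the comultiplication formula, and conclude via \cite[Theorem 2.4.6]{Mon93}. Your additional remarks on why $\sigma$ and $\tau$ do not interfere are a slightly more explicit rendering of the same computation.
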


\begin{proof}
Let $T: \k^G{}^\tau\#_{\sigma}\k F\rightarrow \k$ be the linear map given by
$$\langle T, p_g\# f\rangle=\frac{1}{\mid G \mid}\langle \sum_{x\in G} x \# p_{1_F}, p_g\# f \rangle
=\frac{1}{\mid G \mid}\delta_{1_F,f}.$$
Next let us check that $T$ is a left integral on $\k^G{}^\tau\#_{\sigma}\k F$.
In fact, for any $g\in G, f\in F$,
one can show that
\begin{eqnarray*}
\langle T, p_g\# f \rangle (\sum_{x\in G}p_x \#1_F)
&=&\frac{1}{\mid G \mid}\delta_{1_F,f}(\sum_{x\in G}p_x \#1_F)\\
&=&\sum_{x\in G}\tau(gx^{-1}, x; f)p_{g x^{-1}}\#(x\triangleright f)\langle T, p_x\# f\rangle.
\end{eqnarray*}
It follows that $T$ is a left integral on $\k^G{}^\tau\#_{\sigma}\k F$. Moreover, we have
$$
\langle T, \sum_{x\in G}p_x\# 1_F \rangle=1.
$$
Using \cite[Theorem 2.4.6]{Mon93}, we know that $\k^G{}^\tau\#_{\sigma}\k F$ is cosemisimple.
\end{proof}

As mentioned above, let us illustrate it with an example.
\begin{example}\label{example:H(e,f)}\rm
Let $\mathbb{Z}_2=\{g\mid g^2=1\}$. Define group actions $\mathbb{Z}_2\xleftarrow{\triangleleft}\mathbb{Z}_2\times \mathbb{Z} \xrightarrow{\triangleright}\mathbb{Z}$ on the sets by
$$
1\triangleleft i=1,\;\; g\triangleleft i=g,\;\;1\triangleright i=i,\;\;g\triangleright i=-i,
$$
for any $i\in \mathbb{Z}$. Clearly, $(\mathbb{Z}, \mathbb{Z}_2)$ together with group actions $\mathbb{Z}_2\xleftarrow{\triangleleft}\mathbb{Z}_2\times \mathbb{Z} \xrightarrow{\triangleright}\mathbb{Z}$ on the sets is a matched pair. Consider the case when $\sigma$ and $\tau$ are trivial, that is, $$\sigma(i, j)=1$$ and $$\tau(x)=1\otimes 1$$ for any $i, j\in \mathbb{Z}$ and $x\in \mathbb{Z}_2$.
In such a case, let $H(\mathbb{Z}, \mathbb{Z}_2)=\k^{\mathbb{Z}_2}{}^{\tau}\#_{\sigma}\k\mathbb{Z}$ be the Hopf algebra defined in Proposition \ref{prop:Hopf} with the described algebra and coalgebra structures. Combining Propositions \ref{prop:Hopf} and \ref{prop:cosemisimple}, we know that $H(\mathbb{Z}, \mathbb{Z}_2)$ is a cosemisimple Hopf algebra.

For any $i\in \mathbb{Z}$, let $e_i=p_1\otimes (-i)$ and $f_i=p_g\otimes i$. As an algebra, $H(\mathbb{Z}, \mathbb{Z}_2)$ is generated by $e_i, f_i$ for $i\in\mathbb{Z}$, subject to the following relations
 \begin{eqnarray*}
1=e_0+f_0, \;\;e_ie_j=e_{i+j},\;\;f_if_j=f_{i+j},\;\;e_if_j=f_je_i=0,
\end{eqnarray*}
for any $i, j\in\mathbb{Z}$.
The comultiplication, counit and the antipode are given by
\begin{eqnarray*}
\Delta(e_i)=e_i\otimes e_i+f_i\otimes f_{-i},\;\;\varepsilon(e_i)=1,\;\;S(e_i)=e_{-i},
\end{eqnarray*}
\begin{eqnarray*}
\Delta(f_i)=e_i\otimes f_i+f_i\otimes e_{-i},\;\;\varepsilon(f_i)=0,\;\;S(f_i)=f_{i},
\end{eqnarray*}
for any $i\in\mathbb{Z}$.

Denote $x=e_0-f_0$, it is clear that $x$ is a group-like element of order $2$. For any $i\geq 1$, denote
$C_i=\span\{e_i, f_i, e_{-i}, f_{-i}\}$. We can show that each $C_i$ is a simple subcoalgebra with basic multiplicative matrix $\C_i$, where
$$
\C_i=\left(\begin{array}{cc}
e_i&f_i\\
f_{-i}&e_{-i}
 \end{array}\right).
$$
It should be pointed out that $C_i=S(C_i)$ for any $i\geq 1.$
We can obtain $$H(\mathbb{Z}, \mathbb{Z}_2)=\k1\oplus \k x\oplus \bigoplus_{i\geq 1}C_i,$$ which follows that the set $\mathcal{S}$ of all the simple subcoalgebras is $\{\k1, \k x\}\cup\{ C_i\mid  i\geq 1\}$.
Note that $H(\mathbb{Z}, \mathbb{Z}_2)$ is exactly the coradical of $H(e_{\pm 1}, f_{\pm 1}, u, v)$ in \cite[Definition 5.1]{YL24}.
\end{example}

Now let us recall that the \textit{cotensor product} (see \cite{Tak77}) of a right comodule $M$ and a left comodule $N$ over a coalgebra $H$ is
\begin{eqnarray*}
M \Box_H N= \ker( M\otimes N \xrightarrow{\rho_M\otimes \id-\id\otimes \rho_N} M\otimes H\otimes N),
\end{eqnarray*}
where $\rho_M$ and $\rho_N$ denote the comodule structure maps of $M$ and $N$.
The functors $M \Box_H -$ and $-\Box_H N$ are left exact. Moreover, we have $$M \Box_H H\cong M$$ and $$H\Box_H N\cong N.$$

Recall that with an epimorphism $H\rightarrow H^\prime$ of coalgebras one can associate a functor $-\Box_{H^\prime} H$. We shall say that $H$ is \textit{faithfully coflat} (\cite[Proposition 1.2.11]{AD95}) as an $H^\prime$-comodule, if
\begin{itemize}
  \item [(1)]whenever $M \rightarrow N$ is an epimorphism of $H^\prime$-comodules, then $M \Box_{H^\prime} H\rightarrow N \Box_{H^\prime} H$ is also an epimorphism of $H$-comodules;
  \item [(2)]$M \Box_{H^\prime} H\rightarrow M$ defined by $m\otimes h\mapsto m \varepsilon(h)$ is surjective for any $H^\prime$-comodule $M$.
\end{itemize}

\begin{corollary}
Let $H=\k^G{}^\tau\#_{\sigma}\k F$ be the Hopf algebra appeared in Proposition \ref{prop:Hopf}. Then
\begin{itemize}
  \item [(1)]$H$ is faithfully coflat as $\k F$-comodule and $\k F$ is a conormal quotient Hopf algebra of $H$.
  \item [(2)] $\k^G$ is a normal Hopf subalgebra of $H$ and $H$ is faithfully flat as $\k^G$-module.
\end{itemize}
\end{corollary}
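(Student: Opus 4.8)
The plan is to read all four assertions off the exact sequence $\k^G\xrightarrow{\iota}H\xrightarrow{\pi}\k F$ of Proposition \ref{prop:Hopf}, using only the explicit structure maps \eqref{eq:multi} and \eqref{eq:comult} together with the fact that the idempotents $p_g$ split $H$. For the flatness half of (2), note that \eqref{eq:multi} gives $(p_h\#1_F)(p_g\#f)=\delta_{h,g}(p_g\#f)$, so as a left $\k^G$-module $H=\bigoplus_{f\in F}\k^G\#f$ with each summand $\k^G\#f\cong\k^G$ free of rank one; hence $H$ is a free, and in particular faithfully flat, left $\k^G$-module. Dually, for the coflatness half of (1), applying $\pi\otimes\id$ to \eqref{eq:comult} and using the normalizations \eqref{eq:co-cocycle1} and $1_G\triangleright f=f$ (Lemma \ref{lem:actioninverse}) yields the left $\k F$-coaction $p_g\#f\mapsto(g\triangleright f)\otimes(p_g\#f)$; since $g\mapsto g\triangleright f$ is a bijection of $F$, every homogeneous component of this $F$-grading has dimension $|G|$, so $H\cong\k F\otimes\k^G$ is a cofree $\k F$-comodule and is therefore faithfully coflat. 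Structurally this is just the normal-basis property of the cleft extension recorded in the Remark preceding the corollary, cf.\ \cite{DT86}.

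Next I would settle conormality of the quotient $\pi$. Recall that $\k F$ is a conormal quotient of $H$ exactly when the left and right $\k F$-coinvariants of $H$ coincide. Condition (iv) of the exact sequence already identifies the left coinvariants ${}^{co\,\k F}H$ with $\k^G$ (equivalently, by the coaction just computed, $p_g\#f$ is left coinvariant iff $g\triangleright f=1_F$, i.e.\ iff $f=1_F$). For the right coinvariants, applying $\id\otimes\pi$ to \eqref{eq:comult} and again using \eqref{eq:co-cocycle1} and $1_G\triangleright f=f$ gives $(\id\otimes\pi)\Delta(p_g\#f)=(p_g\#f)\otimes f$, so $p_g\#f$ is right coinvariant iff $f=1_F$, whence $H^{co\,\k F}=\k^G$ as well. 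The coincidence ${}^{co\,\k F}H=H^{co\,\k F}=\k^G$ is precisely the conormality of $\pi$.

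It remains to prove that $\k^G$ is a normal Hopf subalgebra, i.e.\ stable under the left and right adjoint actions. I would verify this directly: expanding $\operatorname{ad}_{\ell}(p_{g'}\#f')(p_g\#1_F)=\sum(p_{g'}\#f')_{(1)}(p_g\#1_F)\,S\!\big((p_{g'}\#f')_{(2)}\big)$ with \eqref{eq:comult}, the antipode formula of Proposition \ref{prop:Hopf}, and \eqref{eq:multi}, every surviving term collapses to a scalar multiple of $p_{g'x^{-1}}\#\big((x\triangleright f')(x\triangleright f')^{-1}\big)=p_{g'x^{-1}}\#1_F\in\k^G$, and the right adjoint action is handled symmetrically. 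The decisive point is that the group-algebra leg telescopes back to $1_F$. Equivalently, since $\pi$ is a Hopf map its kernel is a Hopf ideal, and \eqref{eq:multi} shows $H(\k^G)^+=(\k^G)^+H=\operatorname{span}\{p_g\#f:g\neq 1_G\}=\ker\pi$ once one observes that $g\triangleleft f=1_G$ iff $g=1_G$ (because $\triangleleft$ is a right $F$-action on $G$ fixing $1_G$, by Lemma \ref{lem:actioninverse}); together with the faithful flatness already shown, this gives normality.

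The only genuinely laborious step is the adjoint computation for normality, where one must carry along the $\sigma$- and $\tau$-factors produced by the antipode and invoke the cocycle identities \eqref{eq:cocycle2} and \eqref{eq:co-cocycle2} to simplify them. However these factors are merely nonzero scalars in $\k$ and do not disturb the essential phenomenon that the $\k F$-component returns to $1_F$; every other assertion is an immediate structural consequence of the $p_g$-decomposition of $H$ together with the two coinvariant computations.
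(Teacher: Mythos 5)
Your proof is correct, but it follows a genuinely different and more self-contained route than the paper's. The paper asserts normality of $\k^G$ and conormality of the quotient $\pi$ as straightforward checks, then imports two general theorems: Chirv\u{a}situ's result that a cosemisimple Hopf algebra is faithfully flat over any Hopf subalgebra gives faithful flatness over $\k^G$, and Schneider's equivalence (faithfully coflat $+$ conormal quotient $\Leftrightarrow$ normal Hopf subalgebra $+$ faithfully flat) then delivers part (1) from part (2). You instead prove everything directly from the structure maps: the decomposition $H=\bigoplus_{f\in F}\k^G\#f$ exhibits $H$ as a \emph{free} left $\k^G$-module, and the coaction $p_g\#f\mapsto (g\triangleright f)\otimes(p_g\# f)$ exhibits $H$ as a cofree $\k F$-comodule with each homogeneous component $\span\{p_g\#(g^{-1}\triangleright f')\mid g\in G\}$ of dimension $|G|$ and meeting $\varepsilon$ nontrivially via $p_{1_G}\#f'$. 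This is stronger than what the paper records (freeness and the normal-basis property rather than mere faithful (co)flatness) and avoids both external theorems, at the cost of the explicit adjoint-action computation for normality, which the paper sidesteps via Schneider. Two small points to tidy up: first, the bijection you invoke for the coflatness count is $f\mapsto g\triangleright f$ for fixed $g$ (the action of a single group element on $F$), not $g\mapsto g\triangleright f$; the conclusion that each component has dimension $|G|$ is nevertheless correct, since the component over $f'$ is spanned by $p_g\#(g^{-1}\triangleright f')$, $g\in G$. Second, you take ``conormal'' to mean ${}^{co\,\k F}H=H^{co\,\k F}$; if one instead defines conormality by stability of $\ker\pi$ under the two coadjoint coactions, the equivalence with the coinvariant condition uses faithful coflatness --- which you have already established independently, so the argument closes either way, but the dependency should be stated in that order.
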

\begin{proof}
It is straightforward to show that $\k^G$ is a normal Hopf subalgebra of $H$ and $\k F$ is a conormal quotient Hopf algebra of $H$.
It follows from \cite[Theorem 2.1]{Chi14} that $H$ is faithfully flat as $\k^G$-module. Using \cite[Theorem 1.4]{Sch93}, we know that $H$ is faithfully coflat as $\k F$-comodule and $\k F$ is a conormal quotient Hopf algebra of $H$ is equivalent to $\k^G$ is a normal Hopf subalgebra of $H$ and $H$ is faithfully flat as $\k^G$-module. Thus the proof is completed.
\end{proof}

\subsection{Compact quantum groups}
In this subsection, let $\k$ be the field of complex numbers $\mathbb{C}$.

We recall that a $*$-\textit{Hopf algebra} is a pair $(H, *)$, where $H$ is a Hopf algebra over $\mathbb{C}$ and $*:H\rightarrow H$ is a conjugate linear involution which is antimultiplicative and comultiplicative. A $*$-\textit{Hopf algebra morphism} is a Hopf algebra morphism that is also a $*$-map.

Suppose $H$ is a $*$-Hopf algebra, and $V$ is a right $H$-comodule endowed with an inner product $\langle-,-\rangle$. $V$ is called \textit{unitarizable} if
$$\sum\langle v_{(0)}, w\rangle S(v_{1})=\sum \langle v, w_{(0)}\rangle w_{1}^*$$
for any $v, w\in V.$

A \textit{compact quantum group} is a $*$-Hopf algebra $(H, *)$ such that every finite-dimensional right $H$-comodule of $H$ is unitarizable (see \cite[\textsection 11 Theorem 27]{KS97}, also \cite{DK94}).
Let $T$ be a left integral on $H$. By \cite[Section 2]{And94}, the sesquilinear form $\langle ,\rangle_r$ defined by
\begin{eqnarray*}
   \langle x, y\rangle_r =\langle T, y^*x\rangle
\end{eqnarray*}
 is Hermitian.
From \cite[Proposition 2.4]{And94}, $H$ is a compact quantum group if and only if $\langle x, y\rangle_r$ is positive defined.

We shall say that a sequence of morphisms of compact quantum groups ($*$-Hopf algebras)
$$\;\; K\xrightarrow{\iota} H \xrightarrow{\pi} A$$
is \textit{exact} if it is as a sequence of the underlying Hopf algebras.
\begin{example}\emph{(}\cite[Example 2.9]{AFH12}\emph{)}\rm
\begin{itemize}
  \item[(1)] If $G$ is a finite group, then the Hopf algebra $\mathbb{C}^G$ is a compact quantum group with the structure $(p_g)^*=p_g$, for all $g\in G.$
  \item[(2)] Let $F$ be a group. The group algebra $\mathbb{C} F$ is a compact quantum group with respect to $f^*=f^{-1},$ for all $f\in F.$
\end{itemize}
\end{example}
The authors in \cite[Theorem 3]{Kac68} (also \cite[Theorem 6.14, Observation 6.15]{AFH12}) showed that $\mathbb{C}^G{}^\tau\#_{\sigma}\mathbb{C} F$ is a compact quantum group under the assumption that $G, F$ are finite groups. In fact, their results still hold even if $F$ is not finite.
\begin{proposition}\label{prop:CQG}
With the notations in Proposition \ref{prop:Hopf}, let $\mathbb{C}^G{}^\tau\#_{\sigma}\mathbb{C} F$ be the Hopf algebra with the described algebra and coalgebra structures. Then the formula
\begin{eqnarray*}
(p_g\# f)^*=\overline{\sigma(g; f, f^{-1})}p_{g\triangleleft f}\# f^{-1}
\end{eqnarray*}
defines a compact quantum group structure on $\mathbb{C}^G{}^\tau\#_{\sigma}\mathbb{C} F$ if and only if
\begin{eqnarray}\label{eq:tau=sigma=1}
\mid\sigma(g; f, f^\prime)\mid=\mid\tau(g, g^\prime; f)\mid=1,
\end{eqnarray}
for all $g, g^\prime\in G, f, f^\prime \in F$. In this situation, $$\mathbb{C}^G\rightarrow \mathbb{C}^G{}^\tau\#_{\sigma}\mathbb{C} F\rightarrow \mathbb{C} F$$ is a short exact sequence of compact quantum groups.
\end{proposition}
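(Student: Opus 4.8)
The plan is to break the equivalence into two independent pieces: (i) a purely (co)algebraic characterization of when the stated formula makes $\mathbb{C}^G{}^\tau\#_\sigma\mathbb{C} F$ into a $*$-Hopf algebra, which accounts for both directions of the condition $|\sigma|=|\tau|=1$; and (ii) the verification that, once it is a $*$-Hopf algebra, the positivity criterion \cite[Proposition 2.4]{And94} recalled above is automatically met, so that it is a compact quantum group. Since a compact quantum group is in particular a $*$-Hopf algebra, the forward implication is contained in (i), while the reverse implication combines (i) with (ii). The map $*$ is conjugate-linear by construction, and it preserves the unit and satisfies $\varepsilon(x^*)=\overline{\varepsilon(x)}$ unconditionally, using only $\sigma(g;1_F,1_F)=1$ and $\sigma(1_G;f,f^{-1})=1$ from (\ref{eq:cocycle1}) together with $g\triangleleft f=1_G\Leftrightarrow g=1_G$ (Lemma \ref{lem:actioninverse}). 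The real content is therefore involutivity, anti-multiplicativity and comultiplicativity.

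For involutivity I would apply the formula twice; using $(g\triangleleft f)\triangleleft f^{-1}=g$ and the identity $\sigma(g\triangleleft f;f^{-1},f)=\sigma(g;f,f^{-1})$ (a consequence of (\ref{eq:cocycle2}), exactly as in the proof of Corollary \ref{coro:S2=1}) one gets $((p_g\#f)^*)^*=|\sigma(g;f,f^{-1})|^2\,p_g\#f$, so $*$ is an involution iff $|\sigma(g;f,f^{-1})|=1$ for all $g,f$. For anti-multiplicativity I would expand $(xy)^*$ and $y^*x^*$ on basis elements $x=p_g\#f$, $y=p_{g'}\#f'$ via (\ref{eq:multi}); both land on $p_{g\triangleleft(ff')}\#{f'}^{-1}f^{-1}$ with the same delta $\delta_{g\triangleleft f,g'}$, and after clearing it the requirement reads
\[
\sigma(g;f,f')\,\sigma(g;ff',{f'}^{-1}f^{-1})=\sigma(g\triangleleft f;f',{f'}^{-1})\,\sigma(g;f,f^{-1})\,\overline{\sigma(g\triangleleft(ff');{f'}^{-1},f^{-1})}.
\]
Two applications of (\ref{eq:cocycle2}) (first with third argument ${f'}^{-1}f^{-1}$, then on $\sigma(g\triangleleft f;f',{f'}^{-1}f^{-1})$, using (\ref{eq:cocycle1}) to discard a factor $\sigma(\,\cdot\,;1_F,f^{-1})$) rewrite the left-hand side as $\sigma(g\triangleleft f;f',{f'}^{-1})\,\sigma(g;f,f^{-1})\,\sigma(g\triangleleft(ff');{f'}^{-1},f^{-1})^{-1}$, whence the identity collapses to $\overline{\sigma(g\triangleleft(ff');{f'}^{-1},f^{-1})}=\sigma(g\triangleleft(ff');{f'}^{-1},f^{-1})^{-1}$. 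As $g,f,f'$ vary and $h\mapsto g\triangleleft h$ is a bijection of $G$, this is precisely $|\sigma(h;a,b)|=1$ for all $h\in G$, $a,b\in F$; so anti-multiplicativity is equivalent to the full condition $|\sigma|=1$ (and it subsumes involutivity).

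The main obstacle is comultiplicativity, $\Delta(x^*)=(*\otimes*)\Delta(x)$, where the bookkeeping is genuine and the $\tau$-condition must be extracted. Expanding both sides for $x=p_g\#f$ through (\ref{eq:comult}), the second tensor leg of the right-hand side is $p_{x\triangleleft f}\#f^{-1}$, so I substitute $z=x\triangleleft f$ (a bijection of the summation index, with inverse $x=z\triangleleft f^{-1}$). Lemma \ref{lem:actioninverse} and the matched-pair identity $(gx^{-1})\triangleleft(x\triangleright f)=(g\triangleleft f)(x\triangleleft f)^{-1}$ then show the first legs agree term by term, namely $p_{(g\triangleleft f)z^{-1}}\#(z\triangleright f^{-1})$. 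Matching coefficients gives
\[
\overline{\sigma(g;f,f^{-1})}\,\tau((g\triangleleft f)z^{-1},z;f^{-1})=\overline{\tau(gx^{-1},x;f)}\,\overline{\sigma(gx^{-1};x\triangleright f,(x\triangleright f)^{-1})}\,\overline{\sigma(x;f,f^{-1})},
\]
with $x=z\triangleleft f^{-1}$. This is the exact analogue of the anti-multiplicativity identity, now relating an unconjugated $\tau$ to a conjugated one: invoking $|\sigma|=1$ and rewriting the left-hand $\tau((g\triangleleft f)z^{-1},z;f^{-1})$ as $\tau(gx^{-1},x;f)^{-1}$ times unit-modulus $\sigma$-phases, by means of the co-cocycle identities (\ref{eq:co-cocycle1}), (\ref{eq:co-cocycle2}) and the compatibility condition (iv) of Proposition \ref{prop:Hopf} (which is what lets the $\sigma$-phases be absorbed), the identity collapses to $\overline{\tau(gx^{-1},x;f)}=\tau(gx^{-1},x;f)^{-1}$, i.e. $|\tau(g,g';f)|=1$. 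I expect the delicate point to be carrying out this rewriting cleanly, since it mixes the co-cocycle relation with (iv); once done, it also shows conversely that when $|\sigma|=|\tau|=1$ every conjugate becomes an inverse and all three identities reduce to the structural relations of Proposition \ref{prop:Hopf}, so that $*$ is a $*$-Hopf algebra structure precisely when $|\sigma|=|\tau|=1$.

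It remains to upgrade the $*$-Hopf algebra to a compact quantum group and to record exactness. Assuming $|\sigma|=|\tau|=1$, I would use the explicit left integral $T$ of Proposition \ref{prop:cosemisimple}, with $\langle T,p_g\#f\rangle=\frac1{|G|}\delta_{1_F,f}$, and compute via (\ref{eq:multi})
\[
(p_{g'}\#f')^*(p_g\#f)=\overline{\sigma(g';f',{f'}^{-1})}\,\delta_{g,g'}\,\sigma(g\triangleleft f';{f'}^{-1},f)\,p_{g\triangleleft f'}\#({f'}^{-1}f);
\]
applying $T$ forces ${f'}^{-1}f=1_F$, i.e. $f=f'$, and then $\sigma(g\triangleleft f;f^{-1},f)=\sigma(g;f,f^{-1})$ together with $|\sigma|=1$ gives
\[
\langle p_g\#f,\,p_{g'}\#f'\rangle_r=\langle T,(p_{g'}\#f')^*(p_g\#f)\rangle=\tfrac1{|G|}\,\delta_{g,g'}\delta_{f,f'}.
\]
Thus $\langle-,-\rangle_r$ equals $\frac1{|G|}$ times the standard Hermitian form in the basis $\{p_g\#f\}$, hence is positive definite, and \cite[Proposition 2.4]{And94} yields that $\mathbb{C}^G{}^\tau\#_\sigma\mathbb{C} F$ is a compact quantum group. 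Finally, the maps $\iota$ and $\pi$ of Proposition \ref{prop:Hopf} are $*$-maps for the structures $p_g^*=p_g$ on $\mathbb{C}^G$ and $f^*=f^{-1}$ on $\mathbb{C} F$—indeed $(p_g\#1_F)^*=p_g\#1_F$ and $\pi((p_g\#f)^*)=\delta_{g,1_G}f^{-1}=\pi(p_g\#f)^*$, both using $\sigma(g;1_F,1_F)=1$ and $\sigma(1_G;f,f^{-1})=1$—so, being exact as Hopf algebras by Proposition \ref{prop:Hopf}, the sequence $\mathbb{C}^G\to\mathbb{C}^G{}^\tau\#_\sigma\mathbb{C} F\to\mathbb{C} F$ is exact as compact quantum groups by definition.
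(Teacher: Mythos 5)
Your proposal is correct, and its overall architecture coincides with the paper's: first establish that the stated $*$ makes $\mathbb{C}^G{}^\tau\#_{\sigma}\mathbb{C} F$ a $*$-Hopf algebra precisely when $|\sigma|=|\tau|=1$, then verify positivity of the Haar form $\langle x,y\rangle_r=\langle T,y^*x\rangle$ with the explicit integral $T$ from Proposition \ref{prop:cosemisimple} and invoke \cite[Proposition 2.4]{And94}. The difference lies in how the first step is handled. The paper delegates it entirely to the literature --- \cite[Proposition 3.2.9]{And96} for the ``if'' direction and the proofs of \cite[Theorems 5.7, 6.14, Observation 6.15]{AFH12} for the ``only if'' --- whereas you carry out the verification by hand, isolating which structural identity each modulus condition comes from: involutivity forces $|\sigma(g;f,f^{-1})|=1$ via $\sigma(g\triangleleft f;f^{-1},f)=\sigma(g;f,f^{-1})$; anti-multiplicativity, after two applications of (\ref{eq:cocycle2}), collapses to $\overline{\sigma(h;a,b)}=\sigma(h;a,b)^{-1}$ for all $h,a,b$ (using that $h\mapsto h\triangleleft(ff')$ is bijective), hence forces $|\sigma|\equiv 1$ in full; and comultiplicativity, under $|\sigma|\equiv1$, reduces via condition (iv) of Proposition \ref{prop:Hopf} specialized at $f'=f^{-1}$ (so that $\tau(g,g';1_F)=1$ kills the left-hand $\tau$) to $\overline{\tau(gx^{-1},x;f)}=\tau(gx^{-1},x;f)^{-1}$, i.e.\ $|\tau|\equiv1$. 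I checked the matched-pair bookkeeping --- in particular the identity $(gx^{-1})\triangleleft(x\triangleright f)=(g\triangleleft f)(x\triangleleft f)^{-1}$ that aligns the first tensor legs under the substitution $z=x\triangleleft f$ --- and the coefficient identities all come out as you state. Your version buys a self-contained proof at the cost of length; the paper's buys brevity at the cost of sending the reader to two external sources for the equivalence with (\ref{eq:tau=sigma=1}). The positivity computation and the observation that $\iota$ and $\pi$ are $*$-maps are the same in both.
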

\begin{proof}
Suppose \begin{eqnarray*}
\mid\sigma(g; f, f^\prime)\mid=\mid\tau(g, g^\prime; f)\mid=1,
\end{eqnarray*}
for all $g, g^\prime\in G, f, f^\prime \in F$.
It follows from \cite[Proposition 3.2.9]{And96} that $\mathbb{C}^G{}^\tau\#_{\sigma}\mathbb{C} F$ is a $*$-Hopf algebra and $$\mathbb{C}^G\rightarrow \mathbb{C}^G{}^\tau\#_{\sigma}\mathbb{C} F\rightarrow \mathbb{C} F$$ is a short exact sequence of $*$-Hopf algebras. Conversely, if $\mathbb{C}^G{}^\tau\#_{\sigma}\mathbb{C} F$ is a $*$-Hopf algebra, (\ref{eq:tau=sigma=1}) is got from the proof of \cite[Theorems 5.7, 6.14, Observation 6.15]{AFH12}.
Besides, note that $T$ defined in the proof of Proposition \ref{prop:cosemisimple} is a left integral on $\mathbb{C}^G{}^\tau\#_{\sigma}\mathbb{C} F$.
Note that for any $g, g^\prime\in G, f, f^\prime \in F$, we have
\begin{eqnarray*}
&&\langle T, (p_g \# f)^* (p_{g^\prime}\# f^\prime) \rangle\\
&=&\langle T, \overline{\sigma(g; f, f^{-1})}(p_{g\triangleleft f}\# f^{-1})(p_{g^\prime}\# f^\prime)\rangle\\
&=&\langle T, \overline{\sigma(g; f, f^{-1})}\delta_{g, g^\prime}\sigma(g\triangleleft f;f^{-1}, f^\prime)p_{g\triangleleft f} \#f^{-1}f^\prime \rangle.
\end{eqnarray*}
It follows that  $$\langle T, (p_g \# f)^* (p_{g^\prime}\# f^\prime) \rangle\neq 0$$ if and only if
$g=g^\prime$ and $f=f^\prime.$
For any $\sum\limits_{g\in G}\sum\limits_{f\in F}a_{g, f}p_g\#f\in \mathbb{C}^G{}^\tau\#_{\sigma}\mathbb{C} F,$ we have
\begin{eqnarray*}
&&\langle T, (\sum\limits_{g\in G}\sum\limits_{f\in F}a_{g, f}p_g\#f)^* (\sum\limits_{g\in G}\sum\limits_{f\in F}a_{g, f}p_g\#f) \rangle\\
 &=& \sum\limits_{g\in G}\sum\limits_{f\in F} \mid a_{g, f}\mid^2\langle T, (p_g\#f)^*(p_g\#f)\rangle\\
 &=&\sum\limits_{g\in G}\sum\limits_{f\in F} \mid a_{g, f}\mid^2 \langle T, \overline{\sigma(g; f, f^{-1})}\sigma(g\triangleleft f , f^{-1}, f) p_{g\triangleleft f}\#1_F\rangle\\
&=&\sum\limits_{g\in G}\sum\limits_{f\in F} \mid a_{g, f}\mid^2 \frac{1}{\mid G\mid}> 0.
\end{eqnarray*}
According to \cite[Proposition 2.4]{And94}, we know that $\mathbb{C}^G{}^\tau\#_{\sigma}\mathbb{C}F$ is a compact quantum group.
\end{proof}
\begin{remark}\rm
Recall that the Haar state $\alpha$ (see \cite[\textsection 11 Proposition 29]{KS97}) of a compact quantum group $H$ is a linear functional on $H$ satisfying $\alpha(1_H)=1$ and $\alpha(h^*h)>0$ for all nonzero $h\in H.$
The left integral $T$ defined in the proof of Proposition \ref{prop:cosemisimple} is exactly the Haar state of the compact quantum group $\mathbb{C}^G{}^\tau\#_{\sigma}\mathbb{C} F$.
\end{remark}

\section{Comodules over $\Bbbk^G{}^\tau\#_{\sigma}\Bbbk F$}\label{section4}
With the notations in Proposition \ref{prop:Hopf}, let $H=\k^G{}^\tau\#_{\sigma}\k F$ be the Hopf algebra with the described algebra and coalgebra structures. We use the notations in Subsection \ref{subsection2.3} for convenience. In this section, we will give a description for the simple right $\k^G{}^\tau\#_{\sigma}\k F$-comodules.

Before proceeding further, let us give the following lemma.
\begin{lemma}\label{lem:Cf}
With the notations in Proposition \ref{prop:Hopf}, let $\k^G{}^\tau\#_{\sigma}\k F$ be the Hopf algebra with the described algebra and coalgebra structures.
For any $f\in F$, let $C_f=\span\{p_g\# (g^\prime \triangleright f)\mid g, g^\prime\in G\}$.
\begin{itemize}
  \item [(1)]For any $f\in F$, $C_f$ is a cosemisimple subcoalgebra of $\k^G{}^\tau\#_{\sigma}\k F$. In particular,
  $C_{1_F}$ is a cosemisimple Hopf subalgebra of $H$ and $C_{1_F}\cong \k^G$ as Hopf algebras.
  \item [(2)]If the stabilizer $G_{f}=\{1_G\}$, then $C_f$ is a simple subcoalgebra. In this case, we have $S(C_f)=C_f$ if and only if $G_{f, f^{-1}}\neq \emptyset.$
\end{itemize}
\end{lemma}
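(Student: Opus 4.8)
The plan is to analyze the subspace $C_f = \span\{p_g \# (g'\triangleright f) \mid g, g' \in G\}$ by first establishing that it is closed under the comultiplication $\Delta$. Using the explicit formula \eqref{eq:comult}, for a generator $p_g \# (g'\triangleright f)$ I would compute
\begin{eqnarray*}
\Delta(p_g \# (g'\triangleright f)) = \sum_{x \in G} \tau(gx^{-1}, x; g'\triangleright f)\, p_{gx^{-1}} \# (x \triangleright (g'\triangleright f)) \otimes p_x \# (g'\triangleright f).
\end{eqnarray*}
Since $x \triangleright (g'\triangleright f) = (xg') \triangleright f$ by the action axiom, both tensor factors are again of the form $p_{(-)} \# ((-)\triangleright f)$, hence lie in $C_f$; so $\Delta(C_f) \subseteq C_f \otimes C_f$, and as the counit restricts correctly, $C_f$ is a subcoalgebra. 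To get cosemisimplicity I would invoke Proposition \ref{prop:cosemisimple}: $H$ is cosemisimple, and any subcoalgebra of a cosemisimple coalgebra is cosemisimple (the coradical filtration is trivial, so every subcoalgebra is a direct sum of simple subcoalgebras).

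For the special case $f = 1_F$, Lemma \ref{lem:actioninverse}(2) gives $g' \triangleright 1_F = 1_F$ for all $g'$, so $C_{1_F} = \span\{p_g \# 1_F \mid g \in G\}$. I would check this is closed under multiplication using \eqref{eq:multi}: $(p_g \# 1_F)(p_{g'} \# 1_F) = \delta_{g, g'} \sigma(g; 1_F, 1_F)\, p_g \# 1_F = \delta_{g,g'} p_g \# 1_F$ by \eqref{eq:cocycle1}, so $C_{1_F}$ is a subalgebra; combined with the coalgebra structure (where $\tau(\cdot,\cdot; 1_F) = 1$ by \eqref{eq:co-cocycle1} makes the coproduct the standard one on $\k^G$) this identifies $C_{1_F} \cong \k^G$ as Hopf algebras via $p_g \# 1_F \mapsto p_g$.

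For part (2), assuming $G_f = \{1_G\}$ means the orbit map $g \mapsto g \triangleright f$ is injective on cosets, and in fact (since the stabilizer is trivial) the elements $\{g' \triangleright f \mid g' \in G\}$ are $|G|$ distinct elements of $F$; hence the $|G|^2$ generators $p_g \# (g'\triangleright f)$ are linearly independent (the group elements of $F$ appearing are distinct across different $g'$). To show $C_f$ is \emph{simple}, I would exhibit a basic multiplicative matrix whose entries span $C_f$: indexing by $G$, set the matrix entry in position $(g', x)$ to be (up to the $\tau$-scalar) $p_{\cdot} \# (g' \triangleright f)$ so that the coproduct formula above matches the multiplicative-matrix condition $\Delta(c_{ij}) = \sum_t c_{it} \otimes c_{tj}$; the co-cocycle relation \eqref{eq:co-cocycle2} is exactly what guarantees the $\tau$-scalars assemble consistently into a genuine multiplicative matrix. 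Since its $|G|^2$ entries are linearly independent, the matrix is basic, so $C_f$ is a single simple subcoalgebra. For the antipode statement, I would apply the antipode formula from Proposition \ref{prop:Hopf}: $S(p_g \# (g'\triangleright f))$ is a scalar times $p_{(\cdots)} \# (g'\triangleright f)^{-1}$, and $(g'\triangleright f)^{-1} \in O_{f^{-1}}$ by the computation preceding Lemma \ref{lem:Gff-1}. Thus $S(C_f) = C_{f^{-1}}$, and $S(C_f) = C_f$ holds precisely when $O_{f^{-1}} = O_f$, i.e. $f^{-1} \in O_f$, which by Lemma \ref{lem:Gff-1} is equivalent to $G_{f,f^{-1}} \neq \emptyset$.

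The main obstacle I anticipate is the bookkeeping in part (2) of verifying that the $\tau$-twisted entries genuinely form a multiplicative matrix: the naive matrix $(p_{\cdot} \# (g'\triangleright f))$ does not satisfy the comultiplicative condition on the nose because of the cocycle $\tau(gx^{-1}, x; g'\triangleright f)$, so one must absorb suitable normalizing scalars (built from $\tau$) into the entries and then check, via \eqref{eq:co-cocycle2}, that the resulting matrix closes up correctly under $\Delta$. Getting these normalizations right — and confirming the normalized entries still span $C_f$ and remain linearly independent — is the delicate step; everything else reduces to direct substitution into the structural formulas.
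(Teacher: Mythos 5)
Your proposal is correct and follows essentially the same route as the paper: cosemisimplicity of $C_f$ from Proposition \ref{prop:cosemisimple} (every subcoalgebra of a cosemisimple coalgebra is cosemisimple), the explicit isomorphism $C_{1_F}\cong \k^G$, and the antipode criterion via $f^{-1}\in O_f$ and Lemma \ref{lem:Gff-1}. The only difference is that you spell out the $\tau$-normalized basic multiplicative matrix for part (2) — the entries $c_{a,b}=\tau(ab^{-1},b;f)\,p_{ab^{-1}}\#(b\triangleright f)$ do close up under $\Delta$ exactly by \eqref{eq:co-cocycle2} — a verification the paper leaves implicit here and only records later in Lemma \ref{lem:coefficientcoalg}.
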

\begin{proof}
\begin{itemize}
  \item [(1)]It is a consequence of Proposition \ref{prop:cosemisimple} that $C_f$ is a cosemisimple subcoalgebra of $\k^G{}^\tau\#_{\sigma}\k F$.
  By definition, we know that
  $$
\Delta(p_g\# 1_F)=\sum_{x\in G} (p_{gx^{-1}} \#1_F)\otimes ( p_x\# 1_F)\in C_{1_F}\otimes C_{1_F},
$$
$$(p_g\# 1_F)(p_{g^\prime}\# 1_F)
=\delta_{g, g^\prime} p_{g}\#1_F\in C_{1_F},$$
and
$$
S(p_g\# 1_F)=p_{g^{-1}}\# 1_F\in C_{1_F},
$$
where $g, g^\prime\in G$. It follows that $C_{1_F}$ is a Hopf subalgebra of $\k^G{}^\tau\#_{\sigma}\k F$. Moreover, define
\begin{eqnarray*}
\alpha:\;\;\; C_{1_F}\;\; &\rightarrow& \k^{G}\\
p_g\#1_F&\mapsto& p_g.
\end{eqnarray*}
It is straightforward to show that $\alpha$ is a Hopf algebra isomorphism.
  \item [(2)]If $G_{f}=\{1_G\},$ then the set $\{p_g\# (g^\prime \triangleright f)\mid g, g^\prime\in G\}$ is linearly independent, which indicates that $C_f$ is a simple subcoalgebra. Suppose $$S(C_f)=C_f,$$ we know that $$S(p_{1_G}\#(1_G\triangleright f))=p_{1_G}\# f^{-1}\in C_f.$$ It follows that $$f^{-1}\in O_f,$$ and thus $$G_{f, f^{-1}}\neq \emptyset.$$
      Conversely, suppose there exists some $g\in G_{f, f^{-1}}.$ According to Lemma \ref{lem:actioninverse}, we have
      \begin{eqnarray*}
      (g^\prime g^{\prime\prime}\triangleright f)^{-1}=((g^\prime g^{\prime\prime})\triangleleft f)\triangleright f^{-1}=((g^\prime g^{\prime\prime})\triangleleft f)\triangleright(g\triangleright f)\in O_f,
      \end{eqnarray*}
      for any $g^\prime, g^{\prime\prime}\in G.$
      As a result, we know that
      \begin{eqnarray*}
     \;\;\;\;\;\;\;\;\;&& S(p_{g^\prime}\#( g^{\prime\prime}\triangleright f))\\
     &=&\sigma((g^\prime)^{-1}; g^\prime g^{\prime\prime}\triangleright f, (g^\prime g^{\prime\prime}\triangleright f)^{-1})^{-1}\tau((g^\prime)^{-1}, g^\prime; g^{\prime\prime}\triangleright f)^{-1}p_{(g^{\prime}\triangleleft (g^{\prime\prime}\triangleright f))^{-1}}\#(g^\prime g^{\prime\prime}\triangleright f)^{-1}\\
     &\in& C_f,
      \end{eqnarray*}
       for any $g^\prime, g^{\prime\prime}\in G.$ Using the fact that $S$ is bijective, one can show that $$S(C_f)=C_f.$$
 \end{itemize}
\end{proof}

For any $f\in F$, let $\k^{G_f}{}^\tau\#\k F$ be the linear space spanned by $\{p_g\# f^\prime\mid g\in G_f, f^\prime\in F\}$. The construction of crossed coproduct makes $\k^{G_f}{}^\tau\#\k F$ into a coalgebra with comultiplication
$$
\Delta(p_g\# f^\prime)=\sum_{x\in G_f}(\tau(gx^{-1}, x; f^\prime) p_{gx^{-1}} \#(x\triangleright f^\prime))\otimes ( p_x\# f^\prime),
$$
and counit
$$
\varepsilon(p_g\# f^\prime)=\delta_{g,1_G},
$$
where $g\in G_f, f^\prime\in F$.

Define
\begin{eqnarray*}
\pi:\;\;\;\; \k^{G}{}^\tau\#_{\sigma}\k F &\rightarrow& \k^{G_f}{}^\tau\#\k F\\
\sum_{g\in G}\sum_{f^\prime\in F}a_{g, f^\prime}p_g\#f^\prime&\mapsto&\sum_{g\in G_f}\sum_{f^\prime\in F}a_{g, f^\prime}p_g\#f^\prime.
\end{eqnarray*}
We can show that $\pi$ is an epimorphism of coalgebras. It follows that $\k^{G}{}^\tau\#_{\sigma}\k F$ is a left $\k^{G_f}{}^\tau\#\k F$-comodule, whose comodule structure is defined by
  \begin{eqnarray*}
 \rho(p_g\#f^\prime)=(\pi\otimes \id)\Delta(p_g\#f^\prime),
  \end{eqnarray*}
  for any $g\in G, f^\prime\in F.$

In \cite[Lemma 3.2, Theorem 3.3]{KMM02}, the authors characterized all the simple left $\k^G{}^\tau\#_{\sigma}\k F$-modules when $G, F$ are finite groups. Dually, we attempt to describe all the simple right  $\k^G{}^\tau\#_{\sigma}\k F$-comodules even if $F$ is not finite. We will show that all simple right $\k^G{}^\tau\#_{\sigma}\k F$-comodules may be described as induced comodules from $\k_{\tau_f}^{G_f}$, where $\k_{\tau_f}^{G_f}$ is the coalgebra over $\k^{G_f}$ with the revised comultiplication
$$\Delta(p_g)=\sum_{x\in G_f}\tau(gx^{-1}, x; f) p_{gx^{-1}}\otimes p_{x}.$$

It will be shown in the following lemma that any right $\k_{\tau_f}^{G_f}$-comodule can induce a right $\k^G{}^\tau\#_{\sigma}\k F$-comodule.
\begin{lemma}\label{lem:inducedcomodule}
Let $H=\k^G{}^\tau\#_{\sigma}\k F$ be the Hopf algebra as in Proposition \ref{prop:Hopf}, and fix an element $f\in F.$ Let $H^\prime=\k^{G_f}{}^\tau\#\k F$ and $\k_{\tau_f}^{G_f}$ be the coalgebras defined above. Then
\begin{itemize}
  \item [(1)]$\k_{\tau_f}^{G_f}$ is isomorphic as a coalgebra to $\k^{G_f}{}^\tau\#\k f$, where $\k^{G_f}{}^\tau\#\k f$ is the subcoalgebra of $H^\prime$ spanned by $\{p_g\#f\mid g\in G_f\}$.
  \item [(2)]Let $(V, \rho)$ be a right $\k_{\tau_f}^{G_f}$-comodule, and let $V^\prime=V\otimes \k f$. Then $V^\prime$ becomes a right $H^\prime$-comodule by defining, for each $v\in V$,
      $$
      \rho^\prime(v\otimes f)=\sum_{g\in G_{f}} v_g\otimes f\otimes p_g\# f,
      $$
      where $\rho(v)=\sum_{g\in G_{f}} v_g\otimes p_g.$
  \item [(3)]Let $\tilde{V}:=V^\prime \Box_{H^\prime} H=(V\otimes \k f)\Box_{H^\prime} H$. Then we may write $\tilde{V}=\sum_{z\in T_f} V\otimes \k f\otimes z.$ Moreover, $\tilde{V}$ becomes a right $H$-comodule whose comodule structure is induced by the comultiplication of $H$.
      For any $x\in G$, suppose $x=g_xz_x,$ where $g_x\in G_f$ and $z_x\in T_f$. Then the comodule structure on $\tilde{V}$ is given by the following formula:
      \begin{eqnarray*}
      &&\tilde{\rho}(v\otimes f\otimes z)\\
      &=&\sum_{x\in G} \tau(z_x^{-1},g_x^{-1};f)^{-1}\tau(z_x^{-1}g_x^{-1}z, z^{-1};f)v_{g_x^{-1}}\otimes f\otimes z_x \otimes p_{x^{-1}z}\#(z^{-1}\triangleright f),
      \end{eqnarray*}
      for any $v\in V$ and $z\in T_f$.
\end{itemize}
\end{lemma}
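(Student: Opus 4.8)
The plan is to dispatch parts (1) and (2) as direct verifications and to concentrate the real effort on the cotensor computation in part (3). For (1), I would show that $D:=\span\{p_g\#f\mid g\in G_f\}$ is a subcoalgebra of $H'$ and that $p_g\mapsto p_g\#f$ is a coalgebra isomorphism $\k_{\tau_f}^{G_f}\xrightarrow{\sim}D$. Since $x\triangleright f=f$ for all $x\in G_f$ and $G_f$ is a subgroup, the comultiplication (\ref{eq:comult}) evaluated on $p_g\#f$ with $g\in G_f$ collapses to $\sum_{x\in G_f}\tau(gx^{-1},x;f)(p_{gx^{-1}}\#f)\otimes(p_x\#f)$, which matches the defining comultiplication of $\k_{\tau_f}^{G_f}$, while the counit agrees by (\ref{eq:counit}). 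For (2), I would verify the two right $H'$-comodule axioms for $\rho'$ directly: the counit axiom reduces to $(\id\otimes\varepsilon)\rho(v)=v$, and coassociativity of $\rho'$ reduces, after substituting the comultiplication of $D$ found in (1), to coassociativity of the $\k_{\tau_f}^{G_f}$-coaction $\rho$ on $V$.

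For (3) I would first determine $\tilde V$ as a vector space. Writing a general element of $V'\otimes H$ as $\sum_{g\in G,f'\in F}(w_{g,f'}\otimes f)\otimes(p_g\#f')$ and imposing the defining condition $(\rho'\otimes\id)=(\id\otimes\rho_{H'})$ of the cotensor product, with the left $H'$-coaction $\rho_{H'}=(\pi\otimes\id)\Delta$, I would compare coefficients in $V'\otimes H'\otimes H$ to extract two facts: a support condition forcing $w_{g,f'}=0$ unless $g\triangleright f'=f$ (so $f'=g^{-1}\triangleright f$ depends only on the coset $G_fg$, equalling $z^{-1}\triangleright f$ when $g\in G_fz$), and a recursion $(w_m)_h=\tau(h,m;m^{-1}\triangleright f)\,w_{hm}$ for $h\in G_f$ that determines all coefficients on a coset from the single value $w_z$ at its representative; consistency of this recursion across a coset is itself an instance of (\ref{eq:co-cocycle2}). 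This exhibits $\tilde V=\bigoplus_{z\in T_f}V\otimes\k f\otimes z$ and fixes the embedding
$$v\otimes f\otimes z=\sum_{h\in G_f}\tau(hz,z^{-1};f)\,(v_h\otimes f)\otimes(p_{hz}\#(z^{-1}\triangleright f)),\qquad \rho(v)=\sum_{h\in G_f}v_h\otimes p_h,$$
which one checks lies in the kernel, again by (\ref{eq:co-cocycle2}).

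To obtain the coaction I would apply $\id_{V'}\otimes\Delta_H$ to this embedding via (\ref{eq:comult}) and collect terms by their rightmost tensorand in $H$, which after the substitution $x=zx'^{-1}$ takes the form $p_{x^{-1}z}\#(z^{-1}\triangleright f)$. For each fixed rightmost factor the surviving $V'\otimes H$-part is supported on the single coset $G_fz_x$, hence is a scalar multiple of the basis vector $v_{g_x^{-1}}\otimes f\otimes z_x$ (with $x=g_xz_x$); reading off this scalar from the coset-representative term $h=g_x^{-1}$ produces a product of $\tau$-values, which I would collapse to $\tau(z_x^{-1},g_x^{-1};f)^{-1}\tau(z_x^{-1}g_x^{-1}z,z^{-1};f)$ using the normalization (\ref{eq:co-cocycle1}) and repeated application of (\ref{eq:co-cocycle2}), for instance the relation $\tau(h,z;z^{-1}\triangleright f)\tau(hz,z^{-1};f)=\tau(z,z^{-1};f)$.

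The main obstacle is exactly this cocycle bookkeeping: tracking the coset decomposition $x=g_xz_x$, the recursion that forces the off-representative coefficients, and the accumulated $\tau$-factors, and confirming that the final scalar is independent of the redundant summation index. That independence holds a priori because $\tilde V$ is, by construction, a right $H$-subcomodule of the $(H',H)$-bicomodule $V'\otimes H$, so $\tilde\rho$ automatically lands in $\tilde V\otimes H$; this is what legitimizes reading the coefficient off a single representative term rather than verifying it term by term.
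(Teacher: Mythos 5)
Your proposal is correct and follows essentially the same route as the paper: the same coalgebra isomorphism in (1), the same direct verification in (2), the same explicit embedding $v\otimes f\otimes z=\sum_{h\in G_f}\tau(hz,z^{-1};f)\,(v_h\otimes f)\otimes(p_{hz}\#(z^{-1}\triangleright f))$, and the same manipulations with (\ref{eq:co-cocycle1}) and (\ref{eq:co-cocycle2}) to extract the coaction coefficient $\tau(z_x^{-1},g_x^{-1};f)^{-1}\tau(z_x^{-1}g_x^{-1}z,z^{-1};f)$. The only difference is in how the underlying space of $V'\Box_{H'}H$ is identified: you solve the cotensor equations directly (the support condition plus the recursion $(w_m)_h=\tau(h,m;m^{-1}\triangleright f)\,w_{hm}$), whereas the paper decomposes $H=\bigoplus_{z\in T_f}(\k^{G_f z}){}^\tau\#\k F$ as left $H'$-comodules, shows each summand is isomorphic to $H'$ via an explicit twist $\gamma_z$, and combines $V'\Box_{H'}H'\cong V'$ with a dimension count --- both yield the same spanning set.
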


\begin{proof}
\begin{itemize}
  \item [(1)]Define
\begin{eqnarray*}
\gamma:\k_{\tau_f}^{G_f}&\rightarrow& \k^{G_f}{}^\tau\#\k f\\
 p_g\;&\mapsto &\;p_g\# f.
\end{eqnarray*}
It is straightforward to show that $f$ is a coalgebra isomorphism.
  \item [(2)]Since $(V, \rho)$ is a right $\k_{\tau_f}^{G_f}$-comodule, it follows that
  $$\sum_{g, x\in G_f}\tau(gx^{-1}, x; f)v_g\otimes p_{gx^{-1}}\otimes p_{x}=\sum_{g, y\in G_f}(v_g)_{y}\otimes p_{y}\otimes p_g,$$
  and
  $$
  v=\sum_{g\in G_f}\varepsilon(p_g) v_g=v_{1_G},
  $$
  for any $v\in V.$
  Clearly we have $$(v_{g})_y=\tau(y, g; f)v_{yg}$$ for any $g, y\in G_f.$
  From this, we deduce
  $$
  \sum_{g, x\in G_f}\tau(gx^{-1}, x; f)v_{g}\otimes f\otimes p_{gx^{-1}}\#f\otimes p_{x}\#f=\sum_{g, y\in G_f} (v_{g})_y\otimes f\otimes p_y\# f \otimes p_g\# f,
  $$
  and
  $$
  v\otimes f=\sum_{g\in G_f}\varepsilon(p_g\# f)v_g\otimes f=v_{1_G}\otimes f,
  $$
  for any $v\in V.$ It turns out that $V^\prime$ is a right $H^\prime$-comodule.
  \item [(3)]
 For any $z\in T_f$, let $(\k^{G_f z}){}^\tau\#\k F$ be the subspace of $(\k^{G}){}^\tau\#\k F$ spanned by $\{p_g\# f^\prime\mid g\in G_f z, f^\prime\in F\}$. Define
  $$
  \delta(p_g\# f^\prime)=(\pi \otimes \id)\Delta(p_g\# f^\prime)
  $$
  for any $g\in G_f z, f^\prime\in F.$
 We know that $(\k^{G_f z}){}^\tau\#\k F$ is a left $H^\prime$-comodule and
  $$
  H=\k^{G}{}^\tau\#_\sigma\k F= \bigoplus_{z\in T_f}(\k^{G_f z}){}^\tau\#\k F
  $$
  as left $H^\prime$-comodule.
  For any $z\in T_f$, define
  \begin{eqnarray*}
  \gamma_z:(\k^{G_f z}){}^\tau\#\k F&\rightarrow& \k^{G_f}{}^\tau\#\k F\\
  \sum_{g\in G_f,f^\prime\in F}a_{g, f^\prime}p_{gz}\#(z^{-1}\triangleright f^\prime)&\mapsto&\sum_{g\in G_f,f^\prime\in F}\tau(gz, z^{-1};f^\prime)^{-1}a_{g,f^\prime}p_{g}\#f^\prime.
  \end{eqnarray*}
  It is clear that $\gamma_z$ is a linear isomorphism.
  We claim that $\gamma_z$ is a left $H$-comodule isomorphism. In fact,
it follows from (\ref{eq:co-cocycle2}) that
$$\tau(gx^{-1},xz;z^{-1}\triangleright f)\tau(gz,z^{-1};f)=\tau(gx^{-1},x;f)\tau(xz,z^{-1};f).$$
For any $g\in G_f, f^\prime\in F, z\in T_f$, we have
 \begin{eqnarray*}
&&(\id\otimes \gamma_z)\delta(p_{gz}\#(z^{-1}\triangleright f^\prime))\\
&=&\sum_{x\in G_f}\tau(gx^{-1}, xz; z^{-1}\triangleright f^\prime)\tau(xz, z^{-1};f^\prime)^{-1}p_{gx^{-1}}\#(x\triangleright f^\prime)\otimes p_{x}\# f^\prime\\
&=&\sum_{x\in G_f}\tau(gx^{-1}, x;f^\prime)\tau(gz, z^{-1};f^\prime)^{-1}p_{gx^{-1}}\#(x\triangleright f^\prime)\otimes p_{x}\# f^\prime\\
&=&\delta\gamma_z(p_{gz}\#(z^{-1}\triangleright f^\prime)).
\end{eqnarray*}
Thus $\gamma_z$ is a left $H$-comodule isomorphism and we have
$$
H= \bigoplus_{z\in T_f}(\k^{G_f z}){}^\tau\#\k F \cong (H^\prime)^{\oplus \mid T_f \mid}.
$$
It follows that
$$\tilde{V}=(V\otimes \k f)\Box_{H^\prime} H \cong (V\otimes \k f)^{\oplus \mid T_f \mid}$$
as right $H$-comodules. This indicates that we may write $$\tilde{V}=\sum_{z\in T_f} V\otimes \k f\otimes z.$$
Moreover, for any $v\in V$ and $z\in T_f$, we know that
\begin{eqnarray*}
(\id\otimes \gamma_{z}^{-1})\rho^\prime(v\otimes f)&=&\sum_{g\in G_f}\tau(gz, z^{-1};f)v_g\otimes f\otimes p_{gz}\#(z^{-1}\triangleright f) \\
&\in& (V\otimes \k f)\Box_{H^\prime} ((\k^{G_f z}){}^\tau\#\k F).
\end{eqnarray*}
Note that $$\dim_{\k}((V\otimes \k f)\Box_{H^\prime} ((\k^{G_f z}){}^\tau\#\k F))=\dim_{\k}(V).$$
Then for any $z\in T_f$, the subspace
$(V\otimes \k f)\Box_{H^\prime} ((\k^{G_f z}){}^\tau\#\k F)$ of $(V\otimes \k f)\Box_{H^\prime} H$ is spanned by $\{\sum_{g\in G_f}\tau(gz, z^{-1};f)v_g\otimes f\otimes p_{gz}\#(z^{-1}\triangleright f)\mid v\in V\}.
$
It follows that
\begin{eqnarray*}
&&(V\otimes \k f)\Box_{H^\prime} H=\span\{\sum_{g\in G_f}\tau(gz, z^{-1};f)v_g\otimes f\otimes p_{gz}\#(z^{-1}\triangleright f)\mid v\in V, z\in T_f\}.
\end{eqnarray*}
In particular, for any $g\in G_f$, we have
\begin{eqnarray*}
&&(\id\otimes \gamma_{z}^{-1})\rho^\prime(v_g\otimes f)\\
&=&\sum_{x\in G_f} \tau(xz, z^{-1};f) (v_g)_x\otimes f \otimes p_{xz}\otimes (z^{-1}\triangleright f)\\
&=&\sum_{x\in G_f}\tau(x, g; f) \tau(xz, z^{-1};f) v_{xg}\otimes f \otimes p_{xz}\otimes (z^{-1}\triangleright f)\\
&=&\sum_{x\in G_f}\tau(xg^{-1}, g; f) \tau(xg^{-1}z, z^{-1};f) v_{x}\otimes f \otimes p_{xg^{-1}z}\otimes (z^{-1}\triangleright f)\\
&\in& (V\otimes \k f)\Box_{H^\prime} ((\k^{G_f z}){}^\tau\#\k F).
\end{eqnarray*}
For any $\sum_{g\in G_f}\tau(gz, z^{-1};f) v_g\otimes f\otimes p_{g z}\#(z^{-1}\triangleright f )\in (V\otimes \k f)\Box_{H^\prime} ((\k^{G_f z}){}^\tau\#\k F)$, define
\begin{eqnarray*}
&&\tilde{\rho}(\sum_{g\in G_f}\tau(gz, z^{-1};f)v_g\otimes f\otimes p_{g z}\#(z^{-1}\triangleright f))\\
&=&\sum_{g\in G_f}\tau(gz, z^{-1};f)v_g\otimes f\otimes \Delta(p_{gz}\#(z^{-1}\triangleright f))\\
&=&\sum_{g\in G_f}\sum_{x\in G}\tau(gz, z^{-1};f)\tau(gx, x^{-1}z; z^{-1}\triangleright f)v_g\otimes f\otimes p_{gx}\#(x^{-1}\triangleright f)\\
&&\otimes p_{x^{-1}z}\#(z^{-1}\triangleright f)\\
&=&\sum_{g\in G_f}\sum_{x\in G}\tau(gz, z^{-1};f)\tau(gx, x^{-1}z; z^{-1}\triangleright f)v_g\otimes f\otimes p_{gg_xz_x}\#(z_x^{-1}\triangleright f)\\
&&\otimes p_{x^{-1}z}\#(z^{-1}\triangleright f).\\
\end{eqnarray*}
Performing a direct substitution in (\ref{eq:co-cocycle2}), we obtain
\begin{eqnarray*}
&&\tau(gz, z^{-1};f)\tau(gx, x^{-1}z; z^{-1}\triangleright f)\\&=&\tau(gx, x^{-1}; f)\tau(x^{-1}z, z^{-1};f)\\
&=&\tau(gg_xz_x, z_x^{-1}g_x^{-1}; f)\tau(z_x^{-1}g_x^{-1}z, z^{-1};f)\\
&=&\tau(gg_x, g_x^{-1};f)\tau(gg_xz_x, z_{x}^{-1};f)\tau(z_x^{-1},g_x^{-1};f)^{-1}\tau(z_x^{-1}g_x^{-1}z, z^{-1};f),
\end{eqnarray*}
for any $g\in G_f, z\in T_f$, $x\in G$ and $f\in F$.
It follows that
\begin{eqnarray*}
\;\;\;\;\;\;\;\;&&\tilde{\rho}(\sum_{g\in G_f}\tau(gz, z^{-1};f)v_g\otimes f\otimes p_{g z}\#(z^{-1}\triangleright f))\\
&=&\sum_{x\in G}\tau(z_x^{-1},g_x^{-1};f)^{-1}\tau(z_x^{-1}g_x^{-1}z, z^{-1};f)\\
 &&(\sum_{g\in G_f} \tau(gg_x, g_x^{-1};f)\tau(gg_xz_x, z_{x}^{-1};f) v_g\otimes f\otimes p_{gg_xz_x}\#(z_x^{-1}\triangleright f))
\otimes p_{x^{-1}z}\#(z^{-1}\triangleright f).\\
&\in& ((V\otimes \k f)\Box_{H^\prime} H)\otimes H.
\end{eqnarray*}
The fact that $\Delta$ is coassociative implies that $\tilde{V}$ is a right $H$-comodule. Besides, the comodule structure on $\tilde{V}$ is given by the following formula:
      \begin{eqnarray*}
     && \tilde{\rho}(v\otimes f\otimes z)\\
     &=&\sum_{x\in G} \tau(z_x^{-1},g_x^{-1};f)^{-1}\tau(z_x^{-1}g_x^{-1}z, z^{-1};f)v_{g_x^{-1}}\otimes f\otimes z_x \otimes p_{x^{-1}z}\#(z^{-1}\triangleright f),
      \end{eqnarray*}
      for any $v\in V$ and $z\in T_f$.
\end{itemize}
\end{proof}
In the following part, we will fix our notations as in Lemma \ref{lem:inducedcomodule}. We are now in a position to prove the main result in this section.
\begin{theorem}\label{thm:simple}
Let $H=\k^G{}^\tau\#_{\sigma}\k F$ be the Hopf algebra as in Proposition \ref{prop:Hopf}, and fix an element $f\in F.$ For each $G$-orbit $O_f$ of $F$, let $V$ be a right $\k_{\tau_f}^{G_f}$-comodule. Let $\tilde{V}=(V\otimes \k f)\Box_{\k^{G_f}{}^\tau\#\k F} H$ be the induced right $H$-comodule as in Lemma \ref{lem:inducedcomodule}.
\begin{itemize}
  \item [(1)]If $V$ is a simple right $\k_{\tau_f}^{G_f}$-comodule, then $\tilde{V}$ is a simple right $H$-comodule.
  \item [(2)]Every simple right $H$-comodule is isomorphic to $\tilde{V}$ for some simple right $\k_{\tau_f}^{G_f}$-comodule, where $f$ ranges over a choice of one element in each $G$-orbit of $F$.
\end{itemize}
\end{theorem}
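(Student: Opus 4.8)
The plan is to lean on the cosemisimplicity of $H$ (Proposition \ref{prop:cosemisimple}): the category of finite-dimensional right $H$-comodules is semisimple, so a comodule is simple exactly when its endomorphism algebra is $\k$, and a family of pairwise non-isomorphic simple comodules is pinned down by matching the total dimension of the associated simple subcoalgebras. Write $H^\prime=\k^{G_f}{}^\tau\#\k F$ and let $\pi\colon H\to H^\prime$ be the coalgebra surjection from the text, with $\pi_*$ the induced corestriction functor on right comodules. The decomposition $H\cong (H^\prime)^{\oplus\mid T_f\mid}$ of Lemma \ref{lem:inducedcomodule}(3) shows $H$ is free, hence faithfully coflat, as a left $H^\prime$-comodule, so the cotensor functor $-\,\Box_{H^\prime}H$ is exact and is right adjoint to $\pi_*$; that is, $\Hom_H(W,\,U\,\Box_{H^\prime}H)\cong \Hom_{H^\prime}(\pi_*W,\,U)$. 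This Frobenius reciprocity is the engine of the whole argument.

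For part (1), set $V^\prime=V\otimes\k f$ and observe that $\operatorname{cf}(V^\prime)$ lies in the $f$-component $\k^{G_f}{}^\tau\#\k f\cong\k_{\tau_f}^{G_f}$, which is a \emph{coalgebra direct summand} of $H^\prime$ because the $G_f$-orbit of $f$ is the singleton $\{f\}$. The adjunction with $W=\tilde V=V^\prime\Box_{H^\prime}H$ gives $\End_H(\tilde V)\cong\Hom_{H^\prime}(\pi_*\tilde V,\,V^\prime)$. Inspecting the formula for $\tilde\rho$ in Lemma \ref{lem:inducedcomodule}(3), the element $\pi(p_{x^{-1}z}\#(z^{-1}\triangleright f))$ is nonzero only when $z^{-1}\triangleright f$ lies in the $G_f$-orbit of $f$, i.e. only for $z=1_G$ (the unique element of $T_f\cap G_f$); a short normalization check with (\ref{eq:co-cocycle1}) identifies this summand of $\pi_*\tilde V$ with $V^\prime$. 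Since $\Hom$ between comodules supported on different coalgebra summands vanishes, the $z\neq 1_G$ summands contribute nothing, whence $\End_H(\tilde V)\cong\End_{\k_{\tau_f}^{G_f}}(V)=\k$ when $V$ is simple; by semisimplicity $\tilde V$ is then simple. The same computation yields $\Hom_H(\tilde V,\tilde W)\cong\Hom_{\k_{\tau_f}^{G_f}}(V,W)$, so non-isomorphic simple $V$ give non-isomorphic $\tilde V$, and $\operatorname{cf}(\tilde V)\subseteq C_f$ by inspection of $\tilde\rho$.

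For part (2), note first that, as $f$ runs over orbit representatives, the partition $F=\bigsqcup O_f$ yields a decomposition $H=\bigoplus_{O_f}C_f$ \emph{as coalgebras} (each $C_f$ is a subcoalgebra by Lemma \ref{lem:Cf}(1), and $\Delta$ produces no cross terms). Hence the simple subcoalgebra $\operatorname{cf}(M)$ of any simple right $H$-comodule $M$ sits inside exactly one $C_f$, so $M$ is a simple $C_f$-comodule for a unique orbit. It remains to show the $\tilde V$ exhaust the simple $C_f$-comodules, which I do by counting dimensions. Restricting (\ref{eq:co-cocycle2}) to $G_f$ makes $\tau_f$ a $2$-cocycle, so $\k_{\tau_f}^{G_f}$ is the dual of the twisted group algebra $\k^{\tau_f}G_f$, which is semisimple; thus $\k_{\tau_f}^{G_f}$ is cosemisimple with $\sum_V(\dim V)^2=\dim\k_{\tau_f}^{G_f}=\mid G_f\mid$. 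Since $\dim\tilde V=\mid T_f\mid\dim V$ and $\dim C_f=\mid G\mid\cdot\mid O_f\mid=\mid G_f\mid\cdot\mid T_f\mid^2$, part (1) supplies a family of pairwise non-isomorphic simple $C_f$-comodules with $\sum_V(\dim\tilde V)^2=\mid T_f\mid^2\mid G_f\mid=\dim C_f=\sum_{N\text{ simple}}(\dim N)^2$. As $C_f$ is cosemisimple, this equality forces the $\tilde V$ to be all its simple comodules, proving (2).

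The main obstacle is the bookkeeping in part (1): one must verify that pushing $\tilde V$ forward along $\pi$ genuinely isolates the $z=1_G$ summand and that, on that summand, the twisting factors $\tau(z_x^{-1},g_x^{-1};f)^{-1}\tau(z_x^{-1}g_x^{-1}z,z^{-1};f)$ collapse (via the normalizations (\ref{eq:co-cocycle1})) to produce an honest $H^\prime$-comodule isomorphism of the $f$-summand with $V^\prime$, not merely a vector-space identification. Setting up the cotensor adjunction together with the faithful coflatness cleanly, so that the $\End$ computation is valid, is the other delicate point; once $\k_{\tau_f}^{G_f}$ is known to be cosemisimple, everything else reduces to the robust dimension count.
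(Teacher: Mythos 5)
Your argument is correct, but part (1) runs along a genuinely different track from the paper's. The paper proves simplicity of $\tilde V$ by brute force: it takes a nonzero subcomodule $W$, expands a nonzero $w=\sum_{z\in T_f}v^{(z)}\otimes f\otimes z$, applies $\tilde\rho$, and uses the linear independence of the elements $p_{x^{-1}z}\#(z^{-1}\triangleright f)$ together with simplicity of $V$ to force $W=\tilde V$; likewise it shows $\tilde V\not\cong\tilde W$ for $V\not\cong W$ by deriving an explicit $\k_{\tau_f}^{G_f}$-colinear map from a hypothetical $H$-colinear isomorphism. You instead set up the adjunction $\Hom_H(W,U\Box_{H^\prime}H)\cong\Hom_{H^\prime}(\pi_*W,U)$ and reduce everything to $\Hom_{H^\prime}(\pi_*\tilde V,V^\prime)\cong\Hom_{\k_{\tau_f}^{G_f}}(V,V)$, invoking Schur's lemma and cosemisimplicity of $H$; this buys you simplicity and the non-isomorphy statement in one stroke and is more conceptual, at the cost of one piece of bookkeeping that must be done honestly: the splitting $\tilde V=\bigoplus_{z}V\otimes\k f\otimes z$ is only a vector-space decomposition, so the claim that ``the $z\neq 1_G$ summands contribute nothing'' has to be read as the computation of the $D_f$-isotypic component of $\pi_*\tilde V$ in the coalgebra decomposition $H^\prime=\bigoplus_{[f^\prime]}D_{f^\prime}$ indexed by $G_f$-orbits; applying $(\id\otimes\varepsilon\circ\mathrm{pr}_{D_f}\circ\pi)\tilde\rho$ to $v\otimes f\otimes z$ does give $\delta_{z,1_G}\,v\otimes f\otimes 1_G$, so the step is sound, but it is exactly the verification you flag and it should be written out. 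Your part (2) is essentially the paper's: the coalgebra decomposition $H=\bigoplus_{O_f}C_f$ plus the count $\sum_V(\dim\tilde V)^2=\mid T_f\mid^2\mid G_f\mid=\dim_\k C_f$; you add the useful observation (left implicit in the paper) that $\sum_V(\dim V)^2=\mid G_f\mid$ because $\k_{\tau_f}^{G_f}$ is dual to a twisted group algebra, hence cosemisimple in characteristic $0$.
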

\begin{proof}
\begin{itemize}
  \item [(1)]
Suppose that $W$ is a nonzero right $H$-subcomodule of $\tilde{V}$. For any $0\neq w\in W$, according to Lemma \ref{lem:inducedcomodule}, we may write $$w=\sum_{z\in T_f}v^{(z)}\otimes f\otimes z,$$
where $v^{(z)}\in V$ for all $z\in T_f,$ and there exists some $z_{0}\in T_f$ such that $v^{(z_0)}\neq0.$ For any $x\in G$, suppose $x=g_xz_x,$ where $g_x\in G_f$ and $z_x\in T_f$.
It is a consequence of Lemma \ref{lem:inducedcomodule} that
\begin{eqnarray*}
&&\tilde{\rho}(\sum_{z\in T_f}v^{(z)}\otimes f\otimes z)\\
&=&\sum_{z\in T_f}\sum_{x\in G} \tau(z_x^{-1},g_x^{-1};f)^{-1}\tau(z_x^{-1}g_x^{-1}z, z^{-1};f)v^{(z)}_{g_x^{-1}}\otimes f\otimes z_x \otimes p_{x^{-1}z}\#(z^{-1}\triangleright f),
\end{eqnarray*}
where $\rho(v^{(z)})=\sum_{g\in G_{f}} v_g^{(z)}\otimes p_g$ for any $z\in T_f$.
Since $V$ is a simple right $\k_{\tau_f}^{G_f}$-comodule, it follows from \cite[Lemma 2.1]{Lar71} that $$V=\span\{v^{(z_0)}_{g}\mid g\in G_f \}.$$
 Because of the fact that $W$ is a right $H$-comodule and $\{p_{x^{-1}z}\#(z^{-1}\triangleright f)\mid x\in G, z\in T_f\}$ is linearly independent, we can show that $$\span\{v^{(z_0)}_{g} \otimes f\otimes z\mid g\in G_f, z\in T_f\}\subseteq W.$$ This means that $$\tilde{V}=W$$ and thus $\tilde{V}$ is simple.
  \item [(2)]
  For any $f\in F$, let $$C_f=\span\{p_g\# (g^\prime \triangleright f)\mid g, g^\prime\in G\}.$$ According to Lemma \ref{lem:Cf}, we know that $C_f$ is a cosemisimple subcoalgebra of $H$. Suppose $V$ is a simple right $\k_{\tau_f}^{G_f}$-comodule and $\tilde{V}$ be the corresponding simple right $H$-comodule. It is clear that $\tilde{V}$ is a right $C_f$-comodule. Let $\operatorname{cf}(\tilde{V})$ be the coefficient coalgebra of $\tilde{V}$. Then $\operatorname{cf}(\tilde{V})$ is a simple subcoalgebra of $C_f$ and $$\dim_\k(\operatorname{cf}(\tilde{V}))=\dim_\k(\tilde{V})^2.$$
  Suppose $V$ and $W$ are nonisomorphic simple right $\k_{\tau_f}^{G_f}$-comodule, we claim that $\tilde{V}$ and $\tilde{W}$ are nonisomorphic as $H$-comodules. In fact, if $$\alpha :\tilde{V}\rightarrow \tilde{W}$$ is a right $H$-comodule isomorphism, then we have
    \begin{eqnarray*}
    &&\tilde{\rho}(\alpha(v\otimes f\otimes z))\\
    &=&(\alpha\otimes \id)(\tilde{\rho}(v\otimes f\otimes z)) \\
    &=& \sum_{x\in G} \tau(z_x^{-1},g_x^{-1};f)^{-1}\tau(z_x^{-1}g_x^{-1}z, z^{-1};f)\alpha(v_{g_x^{-1}}\otimes f\otimes z_x) \otimes p_{x^{-1}z}\#(z^{-1}\triangleright f),
    \end{eqnarray*}
  for any $v\in V, z\in T_f.$
  For any $v\in V$ and $z\in T_f$, we may suppose $$\alpha(v\otimes f\otimes z)=\sum_{i\in I} w^{(i)}\otimes f\otimes z,$$
  where $w^{(i)}\in W.$
  Since $\{p_{x^{-1}z}\#(z^{-1}\triangleright f)\mid x\in G, z\in T_f\}$ is linearly independent, it follows that
  \begin{eqnarray*}
 && \sum_{i\in I}\sum_{x\in G_f} \tau(1_G,g_x^{-1};f)^{-1}\tau(x^{-1}z, z^{-1};f)w^{(i)}_{x^{-1}}\otimes f\otimes 1_G \otimes p_{x^{-1}z}\#(z^{-1}\triangleright f)\\
  &=&\sum_{x\in G_f} \tau(1_G,x^{-1};f)^{-1}\tau(x^{-1}z, z^{-1};f)\alpha(v_{x^{-1}}\otimes f\otimes 1_G) \otimes p_{x^{-1}z}\#(z^{-1}\triangleright f).
  \end{eqnarray*}
  It turns out that $$\alpha(v_{x^{-1}}\otimes f\otimes 1_G)=\sum_{i\in I}w^{(i)}_{x^{-1}}\otimes f\otimes 1_G,$$
  for any $x\in G_f.$
  Define \begin{eqnarray*}
\alpha^\prime: V&\rightarrow& W\\
v&\mapsto& \sum_{i\in I}w^{(i)}.
\end{eqnarray*}
It is straightforward to show that $\alpha^\prime$ is a right $\k_{\tau_f}^{G_f}$-comodule isomorphism, which is a contradiction.

  Let $\mathcal{O}$ be the set of nonisomorphic simple right $\k_{\tau_f}^{G_f}$-comodules.
      Now taking the sum over all $V\in \mathcal{O}$, we obtain
      \begin{eqnarray*}
      \sum_{V\in \mathcal{O}}(\dim_{\k}(\tilde{V}))^2
      &=&\sum_{V\in \mathcal{O}}(\mid T_f \mid\dim_{\k}(V))^2\\
      &=&\mid T_f \mid^2 \sum_{V\in \mathcal{O}}(\dim_{\k}(V))^2\\
      &=&\mid T_f \mid^2 \mid G_f\mid\\
      &=&\mid T_f \mid \mid G\mid\\
      &=&\dim_{\k}(C_f).
      \end{eqnarray*}
      This indicates that any simple right $C_f$-comodule must be among the $\tilde{V}$. Let $F_G$ be the set of representative elements of disjoint orbits in $F$ under the action of $G$. Observe that $$H=\oplus_{f\in F_G} C_f .$$
      Consequently, every simple right $H$-comodule is isomorphic to $\tilde{V}$ for some simple right $\k_{\tau_f}^{G_f}$-comodule, where $f$ ranges over a choice of one element in each $G$-orbit of $F$.
\end{itemize}
\end{proof}
As a result, we have
\begin{corollary}\label{coro:equi}
Fix a $G$-orbit $O_f$ of $F$. The association $V\mapsto \tilde{V}$ in Theorem \ref{thm:simple} is the object map of an equivalence of categories
$$
\k_{\tau_f}^{G_f}\text{-Comod} \rightarrow C_f\text{-Comod}.
$$
This categorical equivalence induces an isomorphism between the corresponding Grothendieck group $\operatorname{Gr}(\k_{\tau_f}^{G_f}\text{-Comod}) $ and $\operatorname{Gr}(C_f\text{-Comod})$.
\end{corollary}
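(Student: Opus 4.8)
The plan is to exhibit
$$\mathcal{I}:=(-\otimes\k f)\,\Box_{H^\prime}\,H,\qquad H^\prime=\k^{G_f}{}^\tau\#\k F,$$
as a $\k$-linear functor $\k_{\tau_f}^{G_f}\text{-Comod}\to C_f\text{-Comod}$ that is additive, fully faithful and essentially surjective. First I would record that both source and target are semisimple abelian $\k$-linear categories: $C_f$ is cosemisimple by Lemma \ref{lem:Cf}(1), while $\k_{\tau_f}^{G_f}\cong\k^{G_f}{}^\tau\#\k f$ by Lemma \ref{lem:inducedcomodule}(1), and its dual algebra is the twisted group algebra of $G_f$ with cocycle $\tau(-,-;f)$, which is semisimple since $\k$ is algebraically closed of characteristic $0$. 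Functoriality of $\mathcal{I}$ comes for free from functoriality of the cotensor product: a morphism $\phi\colon V\to V^\prime$ yields the $H^\prime$-comodule map $\phi\otimes\id_{\k f}$ and hence $\mathcal{I}(\phi)=(\phi\otimes\id_{\k f})\Box_{H^\prime}\id_H$. Inspecting the explicit coaction in Lemma \ref{lem:inducedcomodule}(3), every structure element $p_{x^{-1}z}\#(z^{-1}\triangleright f)$ lies in $C_f$, so $\mathcal{I}$ indeed lands in $C_f\text{-Comod}$.

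Next I would harvest what is already contained in Theorem \ref{thm:simple}. Since $H\cong(H^\prime)^{\oplus\,\mid T_f\mid}$ as a left $H^\prime$-comodule (Lemma \ref{lem:inducedcomodule}(3)) and cotensoring with a coalgebra over itself is the identity, $\mathcal{I}$ commutes with finite direct sums and is additive. Theorem \ref{thm:simple}(1) shows that $\mathcal{I}$ sends simple objects to simple objects, and the proof of Theorem \ref{thm:simple}(2) shows both that $\mathcal{I}$ takes nonisomorphic simples to nonisomorphic simples and that every simple $C_f$-comodule is isomorphic to $\tilde V$ for some simple $V$. Hence $\mathcal{I}$ induces a bijection between the isomorphism classes of simple objects. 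Essential surjectivity then follows formally: an arbitrary $C_f$-comodule is a finite direct sum of simples, each of which lies in the image of $\mathcal{I}$, and $\mathcal{I}$ preserves direct sums.

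The one step requiring a genuinely new argument is full faithfulness. By additivity it suffices to check that $\mathcal{I}\colon\Hom(V,V^\prime)\to\Hom(\tilde V,\tilde V^\prime)$ is bijective for simple $V,V^\prime$. If $V\not\cong V^\prime$, then $\tilde V$ and $\tilde V^\prime$ are nonisomorphic simples, so both Hom-spaces vanish by Schur's lemma. If $V\cong V^\prime$, I may take $V=V^\prime$; then $\End(V)=\k$ and $\End(\tilde V)=\k$ since $\k$ is algebraically closed and both objects are simple, and the $\k$-linear map $\mathcal{I}$ sends $\id_V$ to $\id_{\tilde V}$, hence is an isomorphism $\k\to\k$. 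Thus $\mathcal{I}$ is fully faithful, and together with essential surjectivity it is an equivalence of categories.

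Finally, the Grothendieck-group statement follows formally: any equivalence of abelian categories carries short exact sequences to short exact sequences, so it descends to a group homomorphism $\operatorname{Gr}(\k_{\tau_f}^{G_f}\text{-Comod})\to\operatorname{Gr}(C_f\text{-Comod})$ with a two-sided inverse induced by a quasi-inverse of $\mathcal{I}$, giving the asserted isomorphism. I expect the only real obstacle to be full faithfulness, but in the semisimple setting it reduces immediately to Schur's lemma once the bijection on simple objects is in hand; all the remaining input is assembled from Lemma \ref{lem:inducedcomodule} and Theorem \ref{thm:simple}.
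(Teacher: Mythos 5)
Your proposal is correct, and it follows the route the paper intends: the corollary is stated there without proof as an immediate consequence of Theorem \ref{thm:simple}, and your argument (induction functor $(-\otimes\k f)\Box_{H'}H$, bijection on simples from Theorem \ref{thm:simple} together with the nonisomorphy claim inside its proof, then full faithfulness via Schur's lemma and essential surjectivity via semisimplicity of $C_f$ from Lemma \ref{lem:Cf}) is exactly the standard completion of that outline. No gaps.
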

\begin{remark}\rm
In fact, the proof of Lemma \ref{lem:inducedcomodule}, Theorem \ref{thm:simple} and Corollary \ref{coro:equi} do not rely on the fact that $\k^G{}^\tau\#_{\sigma}\k F$ is a Hopf algebra. In other words, these results still hold for the crossed coproduct coalgebra $\k^G{}^\tau\#\k F$.
\end{remark}
Finally, we consider the Drinfeld double of $\k^G$ for some finite group $G$, which is a special case of $\k^G{}^\tau\#_{\sigma}\k F$.
\begin{example}\rm
Let $G$ be a finite group. Recall that the Drinfeld double $D(\k^G)$ of $\k^G$ has $(\k^G)^{* cop}\otimes \k^G$ as its underlying vector space. Multiplication is given by
\begin{eqnarray*}
(g\otimes p_x)(h\otimes p_y)=\delta_{h^{-1}xh, y}gh\otimes p_y,
\end{eqnarray*}
for all $g, h, x, y\in G$, with identity element $1_G\otimes (\sum_{g\in G}p_g).$
According to \cite[Proposition 10.3.14]{Mon93}, the coalgebra structure of $D(\k^G)^*$ is given by
$$\Delta(p_g\otimes h)=\sum_{x\in G}p_{gx^{-1}}\otimes (x h x^{-1})\otimes p_x\otimes h,$$
and $$\varepsilon(p_g \otimes h)=\delta_{g, 1_G},$$
for any $g, h\in G.$

Suppose that $G$ acts on itself by conjugation and the co-cocycle $\tau$ is trivial.
It is easy to find that $$D(\k^G)^*\cong \k^G {}^\tau\# \k G$$ as coalgebras. An application of Theorem \ref{thm:simple} to this special case gives all the simple right comodules over $D(\k^G)^*.$
Moreover, in such a case, we have $$O_{g}=\{xgx^{-1}\mid x\in G\},$$the conjugacy class of $g$ in $G$,
and $$G_g=Z_g,$$ the centralizer of $g$ in $G.$
\end{example}

\section{The simple subcoalgebras of $\Bbbk^G{}^\tau\#_{\sigma}\Bbbk F$}\label{section5}
With the notations in Proposition \ref{prop:Hopf}, let $H=\k^G{}^\tau\#_{\sigma}\k F$ be the Hopf algebra with the described algebra and coalgebra structures. For convenience, we will use the notations in Subsection \ref{subsection2.3} and fix our notations as in Lemma \ref{lem:inducedcomodule}.

In this section, we attempt to obtain the Grothendieck ring of the category of finite-dimensional right $\k^G{}^\tau\#_{\sigma}\k F$-comodules.
Before that, let us determine the simple subcoalgebras of $\k^G{}^\tau\#_{\sigma}\k F$, that is,
the coefficient coalgebra of simple $\k^G{}^\tau\#_{\sigma}\k F$-comodules.

In the following part, we fix an element $f\in F.$
Let $V=\span\{v^{(1)}, \cdots, v^{(m)}\}$ be a simple right $\k_{\tau_f}^{G_f}$-comodule. By \cite[Lemma 1.2]{Lar71}, there exists a basis $\{c_{ij}\mid 1\leq i, j\leq m\}$ of $\operatorname{cf}(V)$ such that $$\rho(v^{(i)})=\sum_{j=1}^m v^{(j)}\otimes c_{ji},$$
where $(c_{ij})_{m\times m}$ is a multiplicative matrix.
Suppose $$c_{ij}=\sum_{g\in G_f}a_{ij}^g p_g,$$ where $a_{ij}^g\in \k$ for any $1\leq i, j\leq m$ and $g\in G_f.$ Since $(c_{ij})_{m\times m}$ is a multiplicative matrix, it follows that
$$
a_{ij}^{1_G}=\left\{
\begin{aligned}
1 \;,& ~~~ \text{if} ~~~i=j; \\
0 \;,& ~~~ \text{if} ~~~i\neq j. \\
\end{aligned}
\right.
$$

Now we can prove the following lemma, which characterize the coefficient coalgebra of $\tilde{V}$, where $\tilde{V}=(V\otimes \k f)\Box_{\k^{G_f}{}^\tau\#\k F} H$ is the induced right $H$-comodule as in Lemma \ref{lem:inducedcomodule}.
\begin{lemma}\label{lem:coefficientcoalg}
With the notations above, let $V=\span\{v^{(1)}, \cdots, v^{(m)}\}$ be a simple right $\k_{\tau_f}^{G_f}$-comodule and $\tilde{V}=(V\otimes \k f)\Box_{\k^{G_f}{}^\tau\#\k F} H$ be the induced right $H$-comodule as in Lemma \ref{lem:inducedcomodule}. Suppose for any $1\leq i\leq m$, we have $$\rho(v^{(i)})=\sum_{j=1}^m v^{(j)}\otimes (\sum_{g\in G_f}a_{ji}^g p_g),$$
where $a_{ij}^g\in \k$ for any $1\leq i, j\leq m$ and $g\in G_f.$ Let
\begin{eqnarray*}
B&=&\{\sum_{g\in G_f} \tau((z^{\prime})^{-1},g;f)^{-1}\tau((z^\prime)^{-1}gz, z^{-1};f) a_{ji}^{g}p_{(z^\prime)^{-1}gz}\#(z^{-1}\triangleright f)\\
&&\mid 1\leq i, j\leq m, z, z^\prime\in T_f\}.
 \end{eqnarray*}
 Then $B$ is linearly independent and the coefficient coalgebra of $\tilde{V}$ is a simple subcoalgebra of $C_f$ spanned by the elements of $B$.
\end{lemma}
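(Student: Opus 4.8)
The plan is to read off the comodule structure of $\tilde{V}$ from part (3) of Lemma \ref{lem:inducedcomodule}, extract a basis-and-basic-multiplicative-matrix description of the coefficient coalgebra, and then verify that the resulting candidate spanning set $B$ is actually linearly independent. Since $\tilde{V}$ is simple (by Theorem \ref{thm:simple}), its coefficient coalgebra $\operatorname{cf}(\tilde{V})$ is automatically a simple subcoalgebra of $C_f$, and by the general theory recalled after Definition~2.5 (the $\operatorname{cf}$ discussion and \cite[Lemma 2.1]{Lar71}), once I exhibit a basis $\{m_k\}$ of $\tilde{V}$ together with elements $c_{\ell k}$ satisfying $\tilde{\rho}(m_k)=\sum_\ell m_\ell\otimes c_{\ell k}$, the $c_{\ell k}$ automatically span $\operatorname{cf}(\tilde{V})$. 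So the whole task reduces to making the comodule formula explicit in a chosen basis and identifying the coefficients with the elements listed in $B$.

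First I would fix the basis of $\tilde{V}$ given by $\{v^{(i)}\otimes f\otimes z\mid 1\le i\le m,\ z\in T_f\}$, which is legitimate since Lemma \ref{lem:inducedcomodule}(3) writes $\tilde{V}=\sum_{z\in T_f}V\otimes\k f\otimes z$ with $V=\span\{v^{(1)},\dots,v^{(m)}\}$. Then I substitute the expansion $\rho(v^{(i)})=\sum_j v^{(j)}\otimes(\sum_{g\in G_f}a_{ji}^g p_g)$ into the explicit formula for $\tilde{\rho}$. Concretely, $v^{(i)}_{g_x^{-1}}=\sum_j a_{ji}^{g_x^{-1}}v^{(j)}$, so
\begin{eqnarray*}
\tilde{\rho}(v^{(i)}\otimes f\otimes z)
&=&\sum_{x\in G}\sum_{j=1}^m \tau(z_x^{-1},g_x^{-1};f)^{-1}\tau(z_x^{-1}g_x^{-1}z,z^{-1};f)\,a_{ji}^{g_x^{-1}}\\
&&\quad\times\,(v^{(j)}\otimes f\otimes z_x)\otimes\bigl(p_{x^{-1}z}\#(z^{-1}\triangleright f)\bigr).
\end{eqnarray*}
The key bookkeeping step is the reindexing: writing $x=g_xz_x$ with $g_x\in G_f$, $z_x\in T_f$, I replace the summation over $x\in G$ by the double summation over $z_x=z'\in T_f$ and $g_x\in G_f$, then set $g=g_x^{-1}\in G_f$ so that $x^{-1}z=(g_xz')^{-1}z=(z')^{-1}g z$. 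Collecting the coefficient of each basis vector $v^{(j)}\otimes f\otimes z'$ then yields exactly the element of $B$ indexed by $(i,j,z,z')$, namely $\sum_{g\in G_f}\tau((z')^{-1},g;f)^{-1}\tau((z')^{-1}gz,z^{-1};f)a_{ji}^g\,p_{(z')^{-1}gz}\#(z^{-1}\triangleright f)$. This identifies these elements as the entries $c_{(j,z'),(i,z)}$ of a multiplicative matrix for $\tilde{V}$, hence as a spanning set of $\operatorname{cf}(\tilde{V})$.

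It remains to prove linear independence of $B$, and this is where the real work lies. The elements of $B$ live inside $C_f=\span\{p_g\#(g'\triangleright f)\mid g,g'\in G\}$, and the decisive structural fact is that the monomials $\{p_{x^{-1}z}\#(z^{-1}\triangleright f)\mid x\in G,\ z\in T_f\}$ appearing as the $H$-part are linearly independent (this was used repeatedly in the proof of Theorem \ref{thm:simple}). The plan is to separate $B$ according to the pair $(z^{-1}\triangleright f,\ \text{the }G_f\text{-coset of }(z')^{-1}gz)$: different choices of $z$ give distinct $z^{-1}\triangleright f\in O_f$, while the subscript $(z')^{-1}gz$ ranges over the coset $(z')^{-1}G_f z$, and as $(g;i,j)$ vary these monomials remain distinct. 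Thus a nontrivial linear dependence among the elements of $B$ would force, for a fixed $(z,z')$, a dependence among the group-algebra-style coefficient vectors $\bigl(\tau((z')^{-1},g;f)^{-1}\tau((z')^{-1}gz,z^{-1};f)a_{ji}^g\bigr)_{g\in G_f}$ over the varying indices $(i,j)$. Since the $\tau$-factors are nonzero scalars (values in $\k^\times$), these can be divided out, reducing the question to linear independence of the matrices $(a_{ji}^g)$ — equivalently, the linear independence of the $c_{ji}=\sum_g a_{ji}^g p_g$, which holds because $\{c_{ij}\}$ is a \emph{basis} of $\operatorname{cf}(V)$ by \cite[Lemma 2.1]{Lar71}. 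The main obstacle, and the step demanding genuine care, is verifying that the map $(i,j,z,z')\mapsto$ (the supported monomials) is injective enough that no cancellation can occur \emph{across} different $(z,z')$ pairs; I would handle this by first fixing $z$ (which pins down $z^{-1}\triangleright f$ via injectivity of $z\mapsto z^{-1}\triangleright f$ on coset representatives) and then fixing $z'$ (which, together with $z$, pins down the coset $(z')^{-1}G_f z$), after which independence within a single $(z,z')$ block follows from the nonvanishing of the cocycle values and the independence of $\{c_{ij}\}$.
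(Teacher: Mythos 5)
Your proposal is correct and follows the same main line as the paper's proof: substitute the expansion of $\rho(v^{(i)})$ into the explicit formula for $\tilde{\rho}$ from Lemma \ref{lem:inducedcomodule}(3), reindex the sum over $x\in G$ via $x=g_xz_x$, and read off the coefficients of the basis $\{v^{(j)}\otimes f\otimes z'\}$ as exactly the elements of $B$; your substitution $g=g_x^{-1}$ simply absorbs the inversion that the paper carries along as $a_{ji}^{g^{-1}}$ and $p_{(z')^{-1}g^{-1}z}$ until the end. The one place you diverge is the linear independence of $B$: the paper gets it for free by invoking simplicity of $\tilde{V}$ (Theorem \ref{thm:simple}) together with \cite[Lemma 1.2]{Lar71}, which guarantees that the coefficient family attached to any basis of a simple comodule is itself a basis of the (simple) coefficient coalgebra. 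Your direct argument --- separating the monomials $p_{(z')^{-1}gz}\#(z^{-1}\triangleright f)$ first by the value $z^{-1}\triangleright f$ (which determines $z$, since $z_1^{-1}\triangleright f=z_2^{-1}\triangleright f$ forces $z_2z_1^{-1}\in G_f$ and hence $z_1=z_2$ among coset representatives), then by the coset $(z')^{-1}G_fz$ (which determines $z'$), dividing out the nonzero cocycle values, and finally appealing to the linear independence of the $c_{ji}$ --- is also valid and has the minor advantage of not needing simplicity of $\tilde{V}$ for this particular step, at the cost of extra bookkeeping. Either way, the simplicity of $\operatorname{cf}(\tilde{V})$ itself still rests on Theorem \ref{thm:simple} combined with \cite[Theorem 3.2.11]{Rad12}, as you correctly note.
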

\begin{proof}
Since $V$ is a right $\k_{\tau_f}^{G_f}$-comodule, it follows that
\begin{eqnarray*}
\rho(v^{(i)})&=&\sum_{g\in G_f} (v^{(i)})_g\otimes p_g\\
&=&\sum_{j=1}^m v^{(j)}\otimes c_{ji}\\
&=&\sum_{j=1}^m v^{(j)}\otimes (\sum_{g\in G_f} a_{ji}^g p_g)\\
&=&\sum_{g\in G_f}(\sum_{j=1}^m a_{ji}^gv^{(j)})\otimes p_g,
\end{eqnarray*}
for any $1\leq i\leq m$. This means that $$(v^{(i)})_g=\sum_{j=1}^m a_{ji}^gv^{(j)}.$$
For any $x\in G$, suppose $x=g_xz_x,$ where $g_x\in G_f$ and $z_x\in T_f$.
According to Lemma \ref{lem:inducedcomodule}, we know that
\begin{eqnarray*}
&&\tilde{\rho}(v^{(i)}\otimes f\otimes z)\\
&=&\sum_{x\in G} \tau(z_x^{-1},g_x^{-1};f)^{-1}\tau(z_x^{-1}g_x^{-1}z, z^{-1};f)(v^{(i)})_{g_x^{-1}}\otimes f\otimes z_x \otimes p_{x^{-1}z}\#(z^{-1}\triangleright f)\\
&=&\sum_{x\in G} \tau(z_x^{-1},g_x^{-1};f)^{-1}\tau(z_x^{-1}g_x^{-1}z, z^{-1};f)(\sum_{j=1}^m a_{ji}^{g_x^{-1}}v^{(j)})\otimes f\otimes z_x \otimes p_{x^{-1}z}\#(z^{-1}\triangleright f)\\
&=&\sum_{g\in G_f}\sum_{z^\prime\in T_f} \tau((z^{\prime})^{-1},g^{-1};f)^{-1}\tau((z^\prime)^{-1}g^{-1}z, z^{-1};f)(\sum_{j=1}^m a_{ji}^{g^{-1}}v^{(j)})\otimes f\otimes z^\prime \otimes p_{(z^\prime)^{-1}g^{-1}z}\\
&&\#(z^{-1}\triangleright f)\\
&=&\sum_{j=1}^m\sum_{z^\prime\in T_f}v^{(j)}\otimes f\otimes z^\prime \otimes(\sum_{g\in G_f} \tau((z^{\prime})^{-1},g^{-1};f)^{-1}\tau((z^\prime)^{-1}g^{-1}z, z^{-1};f) a_{ji}^{g^{-1}}p_{(z^\prime)^{-1}g^{-1}z}\\
&&\#(z^{-1}\triangleright f)),
\end{eqnarray*}
for any $1\leq i\leq m, z\in T_f.$ According to Theorem \ref{thm:simple}, $\tilde{V}$ is a simple right $\k^G{}^\tau\#_{\sigma}\k F$-comodule. Then the coefficient coalgebra $\operatorname{cf}(\tilde{V})$ of $\tilde{V}$ is a simple subcoalgebra of $C_f$. It follows from \cite[Lemma 1.2]{Lar71} that $\operatorname{cf}(\tilde{V})$ is a subcoalgebra of $C_f$ spanned by
$B$ and $B$ is linearly independent.
\end{proof}
Now we turn to mention the irreducible characters of simple right $\k^G{}^\tau\#_{\sigma}\k F$-comodules.
\begin{proposition}\label{prop:irreduciblecharacter}
With the notations in Lemma \ref{lem:inducedcomodule}, let $V=\span\{v^{(1)}, \cdots, v^{(m)}\}$ be a simple right $\k_{\tau_f}^{G_f}$-comodule and $\tilde{V}=(V\otimes \k f)\Box_{\k^{G_f}{}^\tau\#\k F} H$ be the induced right $H$-comodule. Suppose for any $1\leq i\leq m$, we have $$\rho(v^{(i)})=\sum_{j=1}^m v^{(j)}\otimes (\sum_{g\in G_f}a_{ji}^g p_g),$$
where $a_{ij}^g\in \k$ for any $1\leq i, j\leq m$ and $g\in G_f.$ Then the irreducible character $\chi(\tilde{V})$ of $\tilde{V}$ is
$$\sum\limits_{i=1}^m\sum\limits_{z\in T_f}\sum\limits_{g\in G_f}\tau(z^{-1},g;f)^{-1}\tau(z^{-1}gz, z^{-1};f)a_{ii}^{g}p_{z^{-1}gz}\#(z^{-1}\triangleright f).$$
\end{proposition}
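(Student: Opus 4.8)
The plan is to obtain $\chi(\tilde V)$ as a trace, reading the required matrix coefficients off the comodule structure that has already been computed. By Lemma \ref{lem:inducedcomodule}(3) the family $\{v^{(i)}\otimes f\otimes z\mid 1\le i\le m,\ z\in T_f\}$ is a basis of $\tilde V$, and by Theorem \ref{thm:simple} the comodule $\tilde V$ is simple; hence Lemma \ref{lem:char} applies. Writing the structure map in this basis as $\tilde\rho(v^{(i)}\otimes f\otimes z)=\sum_{j,z'}(v^{(j)}\otimes f\otimes z')\otimes c_{(j,z'),(i,z)}$, Lemma \ref{lem:char} gives $\chi(\tilde V)=\sum_{i,z}c_{(i,z),(i,z)}$, so everything reduces to identifying the diagonal coefficients.

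First I would invoke the expansion of $\tilde\rho(v^{(i)}\otimes f\otimes z)$ already produced in the proof of Lemma \ref{lem:coefficientcoalg}: substituting $(v^{(i)})_g=\sum_j a_{ji}^g v^{(j)}$ into the formula of Lemma \ref{lem:inducedcomodule}(3) and reindexing $x=g_xz_x$ by the pair $(g,z')\in G_f\times T_f$, one finds that the coefficient of $v^{(j)}\otimes f\otimes z'$ is
$$c_{(j,z'),(i,z)}=\sum_{g\in G_f}\tau((z')^{-1},g^{-1};f)^{-1}\tau((z')^{-1}g^{-1}z,z^{-1};f)\,a_{ji}^{g^{-1}}\,p_{(z')^{-1}g^{-1}z}\#(z^{-1}\triangleright f).$$
Setting $j=i$ and $z'=z$ and summing over $1\le i\le m$ and $z\in T_f$ then yields
$$\chi(\tilde V)=\sum_{i=1}^m\sum_{z\in T_f}\sum_{g\in G_f}\tau(z^{-1},g^{-1};f)^{-1}\tau(z^{-1}g^{-1}z,z^{-1};f)\,a_{ii}^{g^{-1}}\,p_{z^{-1}g^{-1}z}\#(z^{-1}\triangleright f).$$
Finally, since $g\mapsto g^{-1}$ is a bijection of the group $G_f$, I would replace $g$ by $g^{-1}$ in the innermost sum, which transforms this expression verbatim into the asserted closed form.

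As for the difficulty: there is essentially no new analytic content, because the substantive work---the expansion of $\tilde\rho$ in the basis $\{v^{(i)}\otimes f\otimes z\}$ together with the cocycle manipulations needed for it---was carried out in Lemma \ref{lem:coefficientcoalg}. The only points demanding care are bookkeeping: correctly singling out the diagonal entries of the $T_f$-indexed coefficient matrix (matching both the comodule index $i$ and the coset index $z$), and checking that the change of summation variable $g\mapsto g^{-1}$ exactly reproduces the stated formula. I therefore expect the proof to be a short corollary of Lemma \ref{lem:coefficientcoalg} rather than an independent computation.
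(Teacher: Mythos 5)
Your proof is correct and follows essentially the same route as the paper: both read the character off as the trace of the coefficient matrix already computed in (the proof of) Lemma \ref{lem:coefficientcoalg}, apply Lemma \ref{lem:char}, and finish with the reindexing $g\mapsto g^{-1}$ over $G_f$. The only cosmetic difference is that the paper identifies the diagonal entries via the counit criterion $\varepsilon(a_{ij})=\delta_{i,j}$ (checking that $\varepsilon$ is nonzero exactly when $z=z'$ and $i=j$), whereas you match the indices $(j,z')=(i,z)$ directly in the explicit formula for $\tilde\rho$.
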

\begin{proof}
Using Theorem \ref{thm:simple}, one can show that $\tilde{V}$ is a simple right $H$-comodule and $\{v^{(i)}\otimes f\otimes z\mid 1\leq i\leq m, z\in T_f\}$ is a basis of $\tilde{V}$. It follows from \cite[Lemma 1.2]{Lar71} that the elements of $B$ can be arranged into a basic multiplicative matrix $\A=(a_{ij})_{m\mid T_f\mid\times m\mid T_f\mid}$. According to Lemma \ref{lem:char}, we know that $$\chi(\tilde{V})=\sum\limits_{i=1}^m a_{ii}.$$ Note that we have $$\varepsilon(a_{ij})=\delta_{i, j},$$ for any $1\leq i, j\leq m.$ It remains to find that when
$$
\varepsilon(\sum_{g\in G_f} \tau((z^{\prime})^{-1},g;f)^{-1}\tau((z^\prime)^{-1}gz, z^{-1};f) a_{ji}^{g}p_{(z^\prime)^{-1}gz}\#(z^{-1}\triangleright f))\neq 0.
$$
In fact, we have $$\varepsilon(p_g\# f)=\varepsilon(p_g)\varepsilon(f)=\delta_{g,1_G},$$ where $g\in G, f\in F.$
It should be pointed out that $$(z^\prime)^{-1}gz =1_G$$ if and only if $$z=z^\prime,\;\; g=1_G.$$
Besides, we know that $$
a_{ij}^{1_G}=\left\{
\begin{aligned}
1 \;,& ~~~ \text{if} ~~~i=j; \\
0 \;,& ~~~ \text{if} ~~~i\neq j. \\
\end{aligned}
\right.
$$
Then
$$
\varepsilon(\sum_{g\in G_f} \tau((z^{\prime})^{-1},g;f)^{-1}\tau((z^\prime)^{-1}gz, z^{-1};f) a_{ji}^{g}p_{(z^\prime)^{-1}gz}\#(z^{-1}\triangleright f))\neq 0
$$
if and only if
$$z=z^\prime,\;\;i=j.$$
To conclude, one can get
$$\chi(\tilde{V})=\sum\limits_{i=1}^m\sum\limits_{z\in T_f}\sum\limits_{g\in G_f}\tau(z^{-1},g;f)^{-1}\tau(z^{-1}gz, z^{-1};f)a_{ii}^{g}p_{z^{-1}gz}\#(z^{-1}\triangleright f).$$
\end{proof}

\begin{corollary}\label{coro:special}
With the notations in Lemma \ref{lem:inducedcomodule}, let $V=\span\{v^{(1)}, \cdots, v^{(m)}\}$ be a simple right $\k_{\tau_f}^{G_f}$-comodule and $\tilde{V}=(V\otimes \k f)\Box_{\k^{G_f}{}^\tau\#\k F} H$ be the induced right $H$-comodule. Suppose for any $1\leq i\leq m$, we have $$\rho(v^{(i)})=\sum_{j=1}^m v^{(j)}\otimes (\sum_{g\in G_f}a_{ji}^g p_g),$$
where $a_{ij}^g\in \k$ for any $1\leq i, j\leq m$ and $g\in G_f.$
\begin{itemize}
 \item [(1)]If $G_f=\{1_G\}$, then the coefficient coalgebra of $\tilde{V}$ is $C_f$ and the irreducible character $$\chi(\tilde{V})=\sum_{g\in G}p_1\#(g \triangleright f).$$
  \item [(2)]If $G_f=G$, then the coefficient coalgebra of $\tilde{V}$ is a simple subcoalgebra of $C_f$ spanned by $$\{\sum_{g\in G} a_{ji}^{g}p_{g}\# f\mid 1\leq i, j\leq m\}.$$ Moreover, the irreducible character $$\chi(\tilde{V})=\sum\limits_{i=1}^m\sum\limits_{g\in G}a_{ii}^{g}p_{g}\# f.$$
\end{itemize}
\end{corollary}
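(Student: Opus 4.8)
The plan is to read both statements off the general formulas of Proposition~\ref{prop:irreduciblecharacter} and Lemma~\ref{lem:coefficientcoalg} by substituting the two extreme stabilizers and simplifying the $\tau$-factors via the normalization~\eqref{eq:co-cocycle1}. First I would record what each hypothesis forces. If $G_f=\{1_G\}$, then the right cosets of $G_f$ in $G$ are singletons, so $T_f=G$; moreover $\k^{G_f}=\k p_{1_G}\cong\k$ carries the trivial comultiplication, so the only simple right $\k_{\tau_f}^{G_f}$-comodule is one-dimensional, whence $m=1$ and $a_{11}^{1_G}=1$. If instead $G_f=G$, then $g\triangleright f=f$ for all $g$, so $O_f=\{f\}$, there is a single coset, and one may take $T_f=\{1_G\}$ with $1_G\triangleright f=f$.

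For part~(1) I would substitute $m=1$, $T_f=G$, and $g=1_G$ (the only element of $G_f$) into the character formula of Proposition~\ref{prop:irreduciblecharacter}. Using $\tau(z^{-1},1_G;f)=\tau(1_G,z^{-1};f)=1$ from~\eqref{eq:co-cocycle1} together with $z^{-1}1_G z=1_G$ and $a_{11}^{1_G}=1$, every summand collapses to $p_{1_G}\#(z^{-1}\triangleright f)$; reindexing $g=z^{-1}$ over $G$ gives $\chi(\tilde V)=\sum_{g\in G}p_1\#(g\triangleright f)$. For the coefficient coalgebra I would feed the same substitutions into the set $B$ of Lemma~\ref{lem:coefficientcoalg}: each generator is the nonzero scalar $\tau((z')^{-1}z,z^{-1};f)\in\k^\times$ times $p_{(z')^{-1}z}\#(z^{-1}\triangleright f)$, so $\operatorname{cf}(\tilde V)=\span\{p_{(z')^{-1}z}\#(z^{-1}\triangleright f)\mid z,z'\in G\}$; it then remains to identify this span with $C_f=\span\{p_g\#(g'\triangleright f)\mid g,g'\in G\}$.

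For part~(2), putting $T_f=\{1_G\}$ (so $z=z'=1_G$) and $1_G\triangleright f=f$ into $B$ and again invoking $\tau(1_G,g;f)=\tau(g,1_G;f)=1$ reduces each generator to $\sum_{g\in G}a_{ji}^g\,p_g\#f$, giving the asserted spanning set, which is a simple subcoalgebra of $C_f$ by Lemma~\ref{lem:coefficientcoalg} and Theorem~\ref{thm:simple}. The identical substitution in the character formula, with the same two cocycle values equal to $1$, yields $\chi(\tilde V)=\sum_{i=1}^m\sum_{g\in G}a_{ii}^g\,p_g\#f$.

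The only step requiring genuine care is the identification in part~(1). Here I would check that $(z,z')\mapsto\bigl((z')^{-1}z,\,z^{-1}\bigr)$ is a bijection of $G\times G$ onto itself, since given $(g,g')$ one solves uniquely $z=(g')^{-1}$ and $z'=(gg')^{-1}$; and because $G_f=\{1_G\}$ forces $z\mapsto z^{-1}\triangleright f$ to be injective (so that $\lvert O_f\rvert=\lvert G\rvert$), the generators $p_{(z')^{-1}z}\#(z^{-1}\triangleright f)$ run exactly over $\{p_h\#w\mid h\in G,\ w\in O_f\}$, which is a basis of $C_f$. Equivalently, one may finish by a dimension count: $\dim_\k\operatorname{cf}(\tilde V)=(\dim_\k\tilde V)^2=\lvert G\rvert^2=\dim_\k C_f$, and since $\operatorname{cf}(\tilde V)$ is a simple subcoalgebra of the simple coalgebra $C_f$ (Lemma~\ref{lem:Cf}(2)), equality is forced.
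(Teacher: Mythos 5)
Your proposal is correct and is exactly the specialization of Lemma \ref{lem:coefficientcoalg} and Proposition \ref{prop:irreduciblecharacter} that the paper intends (the corollary is stated without a written proof); the normalizations $\tau(1_G,g;f)=\tau(g,1_G;f)=1$ and the identifications $T_f=G$, $m=1$ (resp.\ $T_f=\{1_G\}$) are applied correctly. The one point genuinely needing an argument --- that the span of $B$ is all of $C_f$ in part (1) --- you handle twice over, via the explicit bijection $(z,z')\mapsto((z')^{-1}z,z^{-1})$ and via the dimension count against the simple coalgebra $C_f$ from Lemma \ref{lem:Cf}(2), either of which suffices.
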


\begin{example}\rm
Let $p$ be prime and $G=\mathbb{Z}_p=\{g\mid g^p=1\}$. Suppose that $x$ is a generator of $\mathbb{Z}_p$, that is, the order of $x$ is $p$.
Let $\mathbb{Z}[x]=\{\sum\limits_{i=0}^{p-1}a_i x^i\mid a_i \in \mathbb{Z}\}$ be an additive group. Define group actions $\mathbb{Z}_{p}\xleftarrow{\triangleleft}\mathbb{Z}_{p}\times \mathbb{Z}[x] \xrightarrow{\triangleright}\mathbb{Z}[x]$ on the sets by
$$
g^{k}\triangleright \sum\limits_{i=0}^{p-1}a_i x^i= \sum\limits_{i=0}^{p-1} a_i x^{i+k},\;\; g^{k}\triangleleft \sum\limits_{i=0}^{p-1}a_i x^i =g^{k},
$$
for any $1\leq k\leq 2n$, $\sum\limits_{i=0}^{p-1}a_i x^i\in \mathbb{Z}[x]$. Clearly, $(\mathbb{Z}[x], \mathbb{Z}_p)$ together with group actions $\mathbb{Z}_{p}\xleftarrow{\triangleleft}\mathbb{Z}_{p}\times \mathbb{Z}[x] \xrightarrow{\triangleright}\mathbb{Z}[x]$ on the sets is a matched pair. Consider the case when $\sigma$ and $\tau$ are trivial. In this case, let $H(\mathbb{Z}[x], \mathbb{Z}_p)=\k^{\mathbb{Z}_p}{}^{\tau}\#_{\sigma}\k\mathbb{Z}$ be the Hopf algebra defined in Proposition \ref{prop:Hopf} with the described algebra and coalgebra structures. According to Proposition \ref{prop:cosemisimple}, we know that $H(\mathbb{Z}[x], \mathbb{Z}_p)$ is a cosemisimple Hopf algebra.

Denote $\sum\limits_{i=0}^{p-1}a_i x^i$ by $\alpha(a_0, \cdots, a_{p-1})$, where $a_i\in \mathbb{Z}$ for any $0\leq i\leq p-1$. Let $\gamma_p$ be a $p$-cycle. Then for any $\alpha(a_0, \cdots, a_{p-1})$, the $G$-orbit $$O_{\alpha(a_0, \cdots, a_{p-1})}=\{\sum\limits_{i=0}^{p-1}b_i x^i \mid (b_0, \cdots, b_{p-1})=\gamma_n^i (a_0, \cdots, a_{p-1})\;\text{for some } 0\leq i\leq p-1\}.$$
Note that $\mathbb{Z}_p$ is a simple group, whose only subgroups are the trivial subgroup and itself. It follows that
$$
G_{\alpha(a_0, \cdots, a_{p-1})}=\left\{
\begin{aligned}
G,\;\;~~~ &\text{if} ~~~  a_0=a_1=\cdots=a_{p-1}; \\
\{1\},~~~&otherwise .
\end{aligned}
\right.
$$
For any $\alpha(a_0, \cdots, a_{p-1})$, if there exists some $i\neq j$ such that $a_i\neq a_j$, then $C_{\alpha(a_0, \cdots, a_{p-1})}$ is a simple subcoalgebra (see Lemma \ref{lem:Cf}). For any $\alpha(a_0, \cdots, a_{0})$, it follows from Corollary \ref{coro:special} that $$C_{(a_0, \cdots, a_{0})}\cong \mathbb{Z}_p$$ as coalgebras.
\end{example}

Next we attempt to characterize the Grothendieck ring of the category of finite-dimensional right $\k^G{}^\tau\#_{\sigma}\k F$-comodules.
\begin{proposition}\label{prop:tensorproduct}
With the notations in Lemma \ref{lem:inducedcomodule},
suppose $V$ is a simple right $\k_{\tau_f}^{G_f}$-comodule and $W$ is a simple right $\k_{\tau_{f^\prime}}^{G_{f^\prime}}$-comodule. Let $\tilde{V}$ and $\tilde{W}$ be the induced right $\k^G{}^\tau\#_{\sigma}\k F$-comodules of $V$ and $W$, respectively. Then
for any $f^{\prime\prime}\in O_{f, f^{\prime}}$, there exists a non-empty family $\{U_{f^{\prime\prime}}^{(i)}\}_{i\in {I_{f^{\prime\prime}}}}$ of simple right $\k_{\tau_{f^{\prime\prime}}}^{G_{f^{\prime\prime}}}$-comodules such that $$\chi(\tilde{V})\chi(\tilde{W})=\sum\limits_{i\in I_{f^{\prime\prime}}}\sum\limits_{f^{\prime\prime}\in O_{f, f^{\prime}}} \chi(\tilde{U_{f^{\prime\prime}}^{(i)}}).$$  In other words, we have
$$\tilde{V} \otimes \tilde{W}\cong \bigoplus_{f^{\prime\prime}\in O_{f, f^\prime}}\bigoplus_{i\in I_{f^{\prime \prime}}}\tilde{U_{f^{\prime\prime}}^{(i)}}.$$
\end{proposition}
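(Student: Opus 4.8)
The plan is to exploit cosemisimplicity together with the classification of simple comodules in Theorem \ref{thm:simple}, and to control which orbits occur by means of the $F$-grading that the surjection $\pi\colon H\to\k F$ of Proposition \ref{prop:Hopf} induces on every $H$-comodule. Since $H$ is cosemisimple (Proposition \ref{prop:cosemisimple}), the comodule $\tilde V\otimes\tilde W$ is a direct sum of simple right $H$-comodules, and by Theorem \ref{thm:simple}(2) each simple summand is isomorphic to some $\tilde U$ induced from a simple $\k_{\tau_{f''}}^{G_{f''}}$-comodule. Everything therefore reduces to identifying, for each orbit representative $f''$, whether summands from that orbit occur and, when they do, recognising that the ones that occur are exactly those indexed by $f''\in O_{f,f'}$.

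First I would set up the grading. For any right $H$-comodule $(M,\rho_M)$ the map $(\id\otimes\pi)\rho_M$ makes $M$ a right $\k F$-comodule, i.e. an $F$-graded space $M=\bigoplus_{s\in F}M_s$; because $\pi$ is an algebra map, this assignment is monoidal, so $(M\otimes N)_u=\bigoplus_{st=u}M_s\otimes N_t$ and supports multiply, $\operatorname{supp}(M\otimes N)=\operatorname{supp}(M)\operatorname{supp}(N)$. Using the explicit formula of Lemma \ref{lem:inducedcomodule}(3) and $\pi(p_a\#s)=\delta_{a,1_G}s$, a one-line computation gives $(\id\otimes\pi)\tilde\rho(v\otimes f\otimes z)=(v\otimes f\otimes z)\otimes(z^{-1}\triangleright f)$, so $V\otimes\k f\otimes z$ is homogeneous of degree $z^{-1}\triangleright f$ and is nonzero. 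As $z$ runs over $T_f$ the degrees $z^{-1}\triangleright f$ run bijectively over $O_f$ (this uses that $\{z^{-1}\mid z\in T_f\}$ is a set of left coset representatives of $G_f$), whence $\tilde V$ is $F$-graded with full support $O_f$ and every homogeneous component nonzero; likewise $\operatorname{supp}(\tilde W)=O_{f'}$.

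Next, monoidality gives $\operatorname{supp}(\tilde V\otimes\tilde W)=O_fO_{f'}=\bigsqcup_{f''\in O_{f,f'}}O_{f''}$ by Lemma \ref{lem:multiunion}. Any simple summand $\tilde U$ has coefficient coalgebra a simple subcoalgebra of some $C_{f'''}$, hence is supported on the single orbit $O_{f'''}$; being a subcomodule of $\tilde V\otimes\tilde W$ forces $O_{f'''}\subseteq O_fO_{f'}$, i.e. $f'''\in O_{f,f'}$. Thus every simple summand is induced from a simple $\k_{\tau_{f''}}^{G_{f''}}$-comodule with $f''\in O_{f,f'}$, which yields the asserted decomposition with index families $I_{f''}$ (a priori possibly empty).

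The crux is to show each $I_{f''}$ is \emph{non-empty}, and here the grading pays off. Fix $f''\in O_{f,f'}$ and choose $u\in O_{f''}\subseteq O_fO_{f'}$, so $u=st$ with $s\in O_f$, $t\in O_{f'}$. Then $(\tilde V\otimes\tilde W)_u\supseteq\tilde V_s\otimes\tilde W_t\neq0$, since both factors are nonzero by the previous step. As the $F$-support of each simple summand lies in a single orbit, the degree-$u$ part of $\tilde V\otimes\tilde W$ is contributed only by summands supported on $O_{f''}$; its non-vanishing therefore forces at least one such summand, i.e. $I_{f''}\neq\emptyset$. The displayed character identity is then immediate on applying $\chi$, using $\chi(\tilde V\otimes\tilde W)=\chi(\tilde V)\chi(\tilde W)$ and additivity of $\chi$ on direct sums. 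The main obstacle is precisely this non-emptiness: a direct attack through the explicit character product of Proposition \ref{prop:irreduciblecharacter} would require ruling out cancellation among the many terms of $\chi(\tilde V)\chi(\tilde W)$ landing in a fixed $C_{f''}$; the grading argument sidesteps all such bookkeeping by replacing signed character coefficients with genuinely nonzero homogeneous subspaces.
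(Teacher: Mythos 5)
Your proof is correct, but it reaches the conclusion by a genuinely different mechanism than the paper's. The paper works entirely with characters: it multiplies the two expressions from Proposition \ref{prop:irreduciblecharacter} using the product formula (\ref{eq:multi}), observes via Lemma \ref{lem:multiunion} that the elements $(z^{-1}\triangleright f)(y^{-1}\triangleright f^{\prime})$ occurring in the second tensor factors sweep out $\cup_{f^{\prime\prime}\in O_{f,f^{\prime}}}O_{f^{\prime\prime}}$, and then invokes cosemisimplicity together with Lemma \ref{lem:char} and Proposition \ref{prop:irreduciblecharacter} to read off the decomposition. You instead route everything through the $F$-grading induced by the quotient $\pi\colon H\to \k F$ of Proposition \ref{prop:Hopf}: your computation that $(\id\otimes\pi)\tilde{\rho}(v\otimes f\otimes z)=(v\otimes f\otimes z)\otimes (z^{-1}\triangleright f)$ is right (only the term $x=z$ survives, with trivial $\tau$-coefficient), the identification of $\operatorname{supp}(\tilde{V})$ with $O_f$ via left coset representatives $\{z^{-1}\mid z\in T_f\}$ is correct, and monoidality of the pushforward along the Hopf algebra map $\pi$ gives $\operatorname{supp}(\tilde{V}\otimes\tilde{W})=O_fO_{f^{\prime}}$. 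What your approach buys is exactly the point you flag at the end: deducing $I_{f^{\prime\prime}}\neq\emptyset$ from the character product alone requires knowing that the coefficients of the terms of $\chi(\tilde{V})\chi(\tilde{W})$ landing in a fixed $C_{f^{\prime\prime}}$ do not all cancel, a verification the paper's proof leaves implicit; your replacement of signed character coefficients by manifestly nonzero homogeneous subspaces $\tilde{V}_s\otimes\tilde{W}_t$ disposes of this cleanly. What the paper's computation buys in exchange is the explicit formula for $\chi(\tilde{V})\chi(\tilde{W})$, which is what is actually used downstream (e.g.\ in Proposition \ref{prop:multiplication}) to identify the summands $U_{f^{\prime\prime}}^{(i)}$ and their multiplicities concretely; your argument certifies existence of each $I_{f^{\prime\prime}}$ but does not by itself compute the decomposition. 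Both arguments rely on Proposition \ref{prop:cosemisimple}, Theorem \ref{thm:simple} and Lemma \ref{lem:multiunion}; yours is the more robust proof of the statement as literally written.
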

\begin{proof}
Let $V=\span\{v^{(1)}, \cdots, v^{(m)}\}$ and $W=\span\{w^{(1)}, \cdots, w^{(n)}\}$.
For any $1\leq i\leq m, 1\leq j\leq n$, suppose that $$\rho(v^{(i)})=\sum_{l=1}^m v^{(l)}\otimes (\sum_{g\in G_f}a_{li}^g p_g),$$ and $$\rho(w^{(j)})=\sum_{k=1}^n w^{(k)}\otimes (\sum_{h\in G_{f^\prime}}b_{kj}^{h} p_{h}),$$
where $a_{li}^g, b_{kj}^g\in\k$ for any $1\leq l, i\leq  m, 1\leq k, j\leq n, g\in G_f, h\in G_{f^\prime}.$
According to Lemma \ref{lemma:char=ring}, we know that $\tilde{V} \otimes \tilde{W}$ is determined by $\chi(\tilde{V})\chi(\tilde{W})$.
It is a consequence of Proposition \ref{prop:irreduciblecharacter} that
\begin{eqnarray*}
&&\chi(\tilde{V})\chi(\tilde{W})\\
&=&(\sum\limits_{i=1}^m\sum\limits_{z\in T_f}\sum\limits_{g\in G_f}\tau(z^{-1},g;f)^{-1}\tau(z^{-1}gz, z^{-1};f)a_{ii}^{g}p_{z^{-1}gz}\#(z^{-1}\triangleright f))\\
&&(\sum\limits_{j=1}^n\sum\limits_{y\in T_{f^\prime}}\sum\limits_{h\in G_{f^\prime}}\tau(y^{-1},h;f^\prime)^{-1}\tau(y^{-1}hy, y^{-1};f^\prime)b_{jj}^{h}p_{y^{-1}hy}\#(y^{-1}\triangleright f^{\prime}))\\
&=&\sum\limits_{i=1}^m\sum\limits_{z\in T_f}\sum\limits_{g\in G_f}\sum\limits_{j=1}^n\sum\limits_{y\in T_{f^\prime}}\sum\limits_{h\in G_{f^\prime}}
\tau(z^{-1},g;f)^{-1}\tau(z^{-1}gz, z^{-1};f)\tau(y^{-1},h;f^\prime)^{-1}\\
&&\tau(y^{-1}hy, y^{-1};f^\prime)a_{ii}^{g}b_{jj}^{h}\delta_{(z^{-1}gz)\triangleleft (z^{-1}\triangleright f), y^{-1}hy} \sigma(z^{-1}gz; f, f^{\prime})\\
&&p_{z^{-1}gz}\# (z^{-1}\triangleright f)(y^{-1}\triangleright f^{\prime}).
\end{eqnarray*}
Using Lemma \ref{lem:multiunion}, one can show that there exists a subset $O_{f, f^\prime}$ of $F$ such that
\begin{eqnarray*}
\{(z^{-1}\triangleright f)(y^{-1}\triangleright f^{\prime}) \mid z\in T_f, y\in T_{f^{\prime}}\}=
\cup_{f^{\prime\prime}\in O_{f, f^\prime}} O_{f^{\prime\prime}}
\end{eqnarray*}
and the union is disjoint.
Since $\k^G{}^\tau\#_{\sigma}\k F$ is cosemisimple (see Proposition \ref{prop:cosemisimple}), it follows that $\tilde{V} \otimes \tilde{W}$ is a semisimple right $\k^G{}^\tau\#_{\sigma}\k F$-comodule.
Combining Lemma \ref{lem:char} and Proposition \ref{prop:irreduciblecharacter}, we know that
for any $f^{\prime\prime}\in O_{f, f^{\prime}}$, there exists a non-empty family $\{U_{f^{\prime\prime}}^{(i)}\}_{i\in {I_{f^{\prime\prime}}}}$ of simple right $\k_{\tau_{f^{\prime\prime}}}^{G_{f^{\prime\prime}}}$-comodules such that $\chi(\tilde{V})\chi(\tilde{W})=\sum\limits_{i\in I_{f^{\prime\prime}}}\sum\limits_{f^{\prime\prime}\in O_{f, f^{\prime}}} \chi(\tilde{U_{f^{\prime\prime}}^{(i)}})$.
\end{proof}
Next we consider the dual comodule of the simple right $\k^G{}^\tau\#_{\sigma}\k F$-comodule $\tilde{V}$.
\begin{proposition}\label{prop:dual}
With the notations in Lemma \ref{lem:inducedcomodule}, let $V=\span\{v^{(1)}, \cdots, v^{(m)}\}$ be a simple right $\k_{\tau_f}^{G_f}$-comodule and $\tilde{V}=(V\otimes \k f)\Box_{\k^{G_f}{}^\tau\#\k F} H$ be the induced right $H$-comodule. Suppose for any $1\leq i\leq m$, we have $$\rho(v^{(i)})=\sum_{j=1}^m v^{(j)}\otimes (\sum_{g\in G_f}a_{ji}^g p_g),$$
where $a_{ij}^g\in \k$ for any $1\leq i, j\leq m$ and $g\in G_f.$ Then there exists a simple right $\k_{\tau_{f^{-1}}}^{G_{f^{-1}}}$-comodule $U=\{u^{(1)}, \cdots, u^{(m)}\}$ such that
 $$\tilde{V}^*\cong (U\otimes \k f^{-1})\Box_{\k^{G_{f^{-1}}}{}^\tau\#\k F} H.$$
Moreover, if for any $1\leq i\leq m,$ we have
$$\rho(u^{(i)})=\sum_{j=1}^m u^{(j)}\otimes (\sum_{g\in G_{f^{-1}}}b_{ji}^g p_g),$$
where $b_{ij}^g\in \k$ for any $1\leq i, j\leq m$ and $g\in G_{f^{-1}}.$ Then \begin{eqnarray*}
&&\sum\limits_{i=1}^m\sum\limits_{z\in T_f}\sum\limits_{g\in G_f}\tau(z^{-1},g^{-1};f)^{-1}\tau(z^{-1}g^{-1}z, z^{-1};f)a_{ii}^{g^{-1}}\sigma(z^{-1}gz; z^{-1}\triangleright f, (z^{-1}\triangleright f)^{-1})^{-1}\\
&&\tau(zgz^{-1}, z^{-1}g^{-1}z; z^{-1}\triangleright f)^{-1}
p_{(z^{-1}\triangleleft f)(g\triangleleft f)(z^{-1}\triangleleft f)^{-1}}\#(z^{-1}\triangleleft f)\triangleright f^{-1}\\
&=&\sum\limits_{i=1}^m\sum\limits_{z\in T_f}\sum\limits_{g\in G_f}\tau((z\triangleleft f)^{-1},g\triangleleft f; f^{-1})^{-1}
\tau((z\triangleleft f)^{-1}(g\triangleleft f)(z\triangleleft f), (z\triangleleft f)^{-1}; f^{-1})\\
&&b_{ii}^{g\triangleleft f}
p_{(z\triangleleft f)^{-1}(g\triangleleft f)((z\triangleleft f)}\#((z\triangleleft f)^{-1}\triangleright f^{-1}).
\end{eqnarray*}
\end{proposition}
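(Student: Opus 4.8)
The plan is to reduce both claims to the single equation $\chi(\tilde V^*)=S(\chi(\tilde V))$, which holds for any simple comodule over a cosemisimple Hopf algebra (as recorded in the Grothendieck ring subsection), combined with the classification of simple comodules in Theorem \ref{thm:simple}. First I would settle the existence of $U$. Since $H$ is cosemisimple (Proposition \ref{prop:cosemisimple}) and $\tilde V$ is simple (Theorem \ref{thm:simple}), the dual $\tilde V^*$ is again a simple right $H$-comodule with $\operatorname{cf}(\tilde V^*)=S(\operatorname{cf}(\tilde V))$. As $\operatorname{cf}(\tilde V)\subseteq C_f$, it is enough to check $S(C_f)=C_{f^{-1}}$: applying the antipode formula of Proposition \ref{prop:Hopf} to a spanning element $p_g\#(g^\prime\triangleright f)$ of $C_f$ yields an element whose $F$-component is $(g\triangleright(g^\prime\triangleright f))^{-1}\in O_{f^{-1}}$, so $S(C_f)\subseteq C_{f^{-1}}$, and equality follows from $S^2=\id$ (Corollary \ref{coro:S2=1}). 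Thus $\operatorname{cf}(\tilde V^*)$ is a simple subcoalgebra of $C_{f^{-1}}$, and the equivalence of Corollary \ref{coro:equi} for the orbit of $f^{-1}$ produces a simple right $\k_{\tau_{f^{-1}}}^{G_{f^{-1}}}$-comodule $U$, unique up to isomorphism, with $\tilde V^*\cong (U\otimes \k f^{-1})\Box_{\k^{G_{f^{-1}}}{}^\tau\#\k F}H$.

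For the displayed identity I would compute the common element $\chi(\tilde V^*)$ in two ways. Using $\chi(\tilde V^*)=S(\chi(\tilde V))$ and the character formula of Proposition \ref{prop:irreduciblecharacter}, I apply the antipode to each summand $p_{z^{-1}gz}\#(z^{-1}\triangleright f)$. Setting $x=z^{-1}gz$ and $y=z^{-1}\triangleright f$, the stabilizer relation $g\triangleright f=f$ together with Lemma \ref{lem:actioninverse} gives $x\triangleright y=y$, hence $(x\triangleright y)^{-1}=(z^{-1}\triangleleft f)\triangleright f^{-1}$, and $x\triangleleft y=(z^{-1}\triangleleft f)(g\triangleleft f)(z^{-1}\triangleleft f)^{-1}$. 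Substituting these into $S(p_x\#y)$ and then re-indexing the inner sum by $g\mapsto g^{-1}$ on $G_f$ (using $g^{-1}\triangleleft f=(g\triangleleft f)^{-1}$ from Lemma \ref{lem:actioninverse}(4)) produces exactly the left-hand side of the claimed equation.

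The right-hand side I would obtain by applying Proposition \ref{prop:irreduciblecharacter} directly to $U$ over the orbit $O_{f^{-1}}$, which expresses $\chi(\tilde U)$ as a sum over a transversal of $G_{f^{-1}}$ in $G$ and over $g\in G_{f^{-1}}$, involving the coefficients $b_{ii}^{g}$. The assignment $g\mapsto g\triangleleft f$ is a bijection $G_f\to G_{f^{-1}}$, since $(g\triangleleft f)\triangleright f^{-1}=(g\triangleright f)^{-1}=f^{-1}$ by Lemma \ref{lem:actioninverse}(3); moreover, as $z$ runs over $T_f$ the $F$-components $(z^{-1}\triangleleft f)\triangleright f^{-1}=(z^{-1}\triangleright f)^{-1}$ run bijectively over $O_{f^{-1}}$, so by the orbit--coset correspondence $G_{f^{-1}}\backslash G\cong O_{f^{-1}}$ the corresponding group elements form a transversal of $G_{f^{-1}}$. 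Since $\chi(\tilde U)$ is independent of the chosen transversal, re-indexing $\chi(\tilde U)$ through these two bijections yields precisely the right-hand side, and equating the two expressions for $\chi(\tilde V^*)$ completes the proof.

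I expect the main difficulty to be the cocycle bookkeeping that underlies this matching: after inserting the antipode formula one must transform the accumulated $\sigma$- and $\tau$-factors, which initially carry the argument $f$ and the conjugates $z^{-1}g^{\pm1}z$, into factors with argument $f^{-1}$ and the conjugates $(z\triangleleft f)^{-1}(g\triangleleft f)(z\triangleleft f)$, and this rests on repeated use of the $2$-cocycle identity \eqref{eq:cocycle2}, the co-cocycle identity \eqref{eq:co-cocycle2}, and the compatibility condition (iv) of Proposition \ref{prop:Hopf}. Verifying that the reindexing via $\triangleleft f$ really is a bijection at the level of transversals, rather than only of stabilizers, is the other delicate point, and it is cleanest to argue it through the transversal-independence of the irreducible character rather than by forcing a literal term-by-term identification.
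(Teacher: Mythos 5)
Your proposal is correct and follows essentially the same route as the paper: both compute $\chi(\tilde V^*)=S(\chi(\tilde V))$ by applying the antipode formula of Proposition \ref{prop:Hopf} to the character of Proposition \ref{prop:irreduciblecharacter}, using the matched-pair identity $(z^{-1}gz)\triangleleft(z^{-1}\triangleright f)=(z^{-1}\triangleleft f)(g\triangleleft f)(z^{-1}\triangleleft f)^{-1}$ and the reindexing $g\mapsto g^{-1}$, and then match the result against $\chi(\tilde U)$ via the bijections $G_{f^{-1}}=\{g\triangleleft f\mid g\in G_f\}$ and $T_{f^{-1}}=\{z\triangleleft f\mid z\in T_f\}$. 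The only cosmetic difference is that you justify the existence of $U$ through $S(\operatorname{cf}(\tilde V))\subseteq S(C_f)=C_{f^{-1}}$ and Corollary \ref{coro:equi}, whereas the paper deduces it directly from Theorem \ref{thm:simple} after observing $|O_f|=|O_{f^{-1}}|$.
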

\begin{proof}
Let $V=\span\{v^{(1)}, \cdots, v^{(m)}\}$. Suppose for any $1\leq i\leq m$, we have $$\rho(v^{(i)})=\sum_{j=1}^m v^{(j)}\otimes (\sum_{g\in G_f}a_{ji}^g p_g),$$
where $a_{ij}^g\in \k$ for any $1\leq i, j\leq m$ and $g\in G_f.$
Note that we have $$\chi(\tilde{V}^*)=S(\chi(\tilde{V})).$$
Moreover, for any $g\in G_f, z\in T_f$, a simple computation shows that
\begin{eqnarray*}
&&(z^{-1}gz)\triangleleft (z^{-1}\triangleright f)\\
&=&(z^{-1}\triangleleft (gz\triangleright(z^{-1}\triangleright f)))(gz\triangleleft (z^{-1}\triangleright f))\\
&=&(z^{-1}\triangleleft f)(g\triangleleft(z\triangleright (z^{-1}\triangleright f)))(z\triangleleft (z^{-1}\triangleright f))\\
&=&(z^{-1}\triangleleft f)(g\triangleleft f)(z^{-1}\triangleleft f)^{-1}.
\end{eqnarray*}
With the help of Lemma \ref{lem:actioninverse} and Proposition \ref{prop:irreduciblecharacter}, one can show that
\begin{eqnarray*}
&&\chi(\tilde{V}^*)\\&=&S(\sum\limits_{i=1}^m\sum\limits_{z\in T_f}\sum\limits_{g\in G_f}\tau(z^{-1},g;f)^{-1}\tau(z^{-1}gz, z^{-1};f)a_{ii}^{g}p_{z^{-1}gz}\#(z^{-1}\triangleright f))\\
&=&\sum\limits_{i=1}^m\sum\limits_{z\in T_f}\sum\limits_{g\in G_f}\tau(z^{-1},g;f)^{-1}\tau(z^{-1}gz, z^{-1};f)a_{ii}^{g}\sigma(z^{-1}g^{-1}z; z^{-1}\triangleright f, (z^{-1}\triangleright f)^{-1})^{-1}\\
&&\tau(zg^{-1}z^{-1}, z^{-1}gz; z^{-1}\triangleright f)^{-1}
p_{(z^{-1}\triangleleft f)^{-1}(g\triangleleft f)^{-1}(z^{-1}\triangleleft f)}\#(z^{-1}\triangleright f)^{-1}\\
&=&\sum\limits_{i=1}^m\sum\limits_{z\in T_f}\sum\limits_{g\in G_f}\tau(z^{-1},g;f)^{-1}\tau(z^{-1}gz, z^{-1};f)a_{ii}^{g}\sigma(z^{-1}g^{-1}z; z^{-1}\triangleright f, (z^{-1}\triangleright f)^{-1})^{-1}\\
&&\tau(zg^{-1}z^{-1}, z^{-1}gz; z^{-1}\triangleright f)^{-1}
p_{(z^{-1}\triangleleft f)(g\triangleleft f)^{-1}(z^{-1}\triangleleft f)^{-1}}\#(z^{-1}\triangleleft f)\triangleright f^{-1}\\
&=&\sum\limits_{i=1}^m\sum\limits_{z\in T_f}\sum\limits_{g\in G_f}\tau(z^{-1},g^{-1};f)^{-1}\tau(z^{-1}g^{-1}z, z^{-1};f)a_{ii}^{g^{-1}}\sigma(z^{-1}gz; z^{-1}\triangleright f, (z^{-1}\triangleright f)^{-1})^{-1}\\
&&\tau(zgz^{-1}, z^{-1}g^{-1}z; z^{-1}\triangleright f)^{-1}
p_{(z^{-1}\triangleleft f)(g\triangleleft f)(z^{-1}\triangleleft f)^{-1}}\#(z^{-1}\triangleleft f)\triangleright f^{-1}.
\end{eqnarray*}
Using lemma \ref{lem:actioninverse}, we know that $$g\triangleright f=f$$ if and only if $$(g\triangleright f)^{-1}=f^{-1},$$ if and only if
$$(g\triangleleft f)\triangleright f^{-1}=f^{-1}.$$ This means that $$G_{f^{-1}}=\{g\triangleleft f\mid g\in G_f\}.$$
A similar argument shows that $$T_{f^{-1}}=\{z\triangleleft f\mid z\in T_f\}.$$
Clearly, we have $$\mid O_f\mid=\mid O_{f^{-1}}\mid .$$
By Theorem \ref{thm:simple}, there exists a simple right $\k_{\tau_{f^{-1}}}^{G_{f^{-1}}}$-comodule $U=\span\{u^{(1)}, \cdots, u^{(m)}\}$ such that $$\tilde{V}^*\cong (U\otimes \k f^{-1})\Box_{\k^{G_{f^{-1}}}{}^\tau\#\k F} H.$$
Besides, suppose for any $1\leq i\leq m,$ we have
$$\rho(u^{(i)})=\sum_{j=1}^m u^{(j)}\otimes (\sum_{g\in G_{f^{-1}}}b_{ji}^g p_g).$$
 It follows from Lemma \ref{lem:actioninverse} and Proposition \ref{prop:irreduciblecharacter} that
\begin{eqnarray*}
&&\chi((U\otimes \k f^{-1})\Box_{\k^{G_{f^{-1}}}{}^\tau\#\k F} H)\\
&=&\sum\limits_{i=1}^m\sum\limits_{z^{\prime}\in T_{f^{-1}}}\sum\limits_{g^{\prime}\in G_{f^{-1}}}\tau((z^\prime)^{-1},g^\prime;f^{-1})^{-1}\tau((z^\prime)^{-1}g^\prime z^{\prime}, (z^{\prime})^{-1};f^{-1})\\
&&b_{ii}^{g^\prime}p_{(z^\prime)^{-1}g^\prime z^\prime}\#((z^\prime)^{-1}\triangleright f^{-1})\\
&=&\sum\limits_{i=1}^m\sum\limits_{z\in T_f}\sum\limits_{g\in G_f}\tau((z\triangleleft f)^{-1},g\triangleleft f; f^{-1})^{-1}
\tau((z\triangleleft f)^{-1}(g\triangleleft f)(z\triangleleft f), (z\triangleleft f)^{-1}; f^{-1})\\
&&b_{ii}^{g\triangleleft f}
p_{(z\triangleleft f)^{-1}(g\triangleleft f)((z\triangleleft f)}\#((z\triangleleft f)^{-1}\triangleright f^{-1}).
\end{eqnarray*}
By the fact that $$\chi(\tilde{V}^*)=\chi( (U\otimes \k f^{-1})\Box_{\k^{G_{f^{-1}}}{}^\tau\#\k F} H),$$
one have
\begin{eqnarray*}
&&\sum\limits_{i=1}^m\sum\limits_{z\in T_f}\sum\limits_{g\in G_f}\tau(z^{-1},g^{-1};f)^{-1}\tau(z^{-1}g^{-1}z, z^{-1};f)a_{ii}^{g^{-1}}\sigma(z^{-1}gz; z^{-1}\triangleright f, (z^{-1}\triangleright f)^{-1})^{-1}\\
&&\tau(zgz^{-1}, z^{-1}g^{-1}z; z^{-1}\triangleright f)^{-1}
p_{(z^{-1}\triangleleft f)(g\triangleleft f)(z^{-1}\triangleleft f)^{-1}}\#(z^{-1}\triangleleft f)\triangleright f^{-1}\\
&=&\sum\limits_{i=1}^m\sum\limits_{z\in T_f}\sum\limits_{g\in G_f}\tau((z\triangleleft f)^{-1},g\triangleleft f; f^{-1})^{-1}
\tau((z\triangleleft f)^{-1}(g\triangleleft f)(z\triangleleft f), (z\triangleleft f)^{-1}; f^{-1})\\
&&b_{ii}^{g\triangleleft f}
p_{(z\triangleleft f)^{-1}(g\triangleleft f)((z\triangleleft f)}\#((z\triangleleft f)^{-1}\triangleright f^{-1}).
\end{eqnarray*}
\end{proof}

This leads to the following corollary.
\begin{corollary}
With the notations in Lemma \ref{lem:inducedcomodule}, let $V$ be a simple right $\k_{\tau_f}^{G_f}$-comodule and $\tilde{V}=(V\otimes \k f)\Box_{\k^{G_f}{}^\tau\#\k F} H$ be the induced right $H$-comodule. If $\tilde{V}^*\cong \tilde{V}$, then the set $G_{f, f^{-1}}=\{g\in G\mid g\triangleright f=f^{-1}\}\neq \emptyset$.
\end{corollary}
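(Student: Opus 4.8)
The plan is to compare the irreducible characters of $\tilde{V}$ and $\tilde{V}^*$, exploiting the block decomposition $H=\bigoplus_{f\in F_G}C_f$ over the $G$-orbits: the comodule $\tilde{V}$ sits inside the block $C_f$, while its dual sits inside the block $C_{f^{-1}}$ attached to the opposite orbit. The hypothesis $\tilde{V}^*\cong\tilde{V}$ then forces these two blocks to overlap, and an overlap occurs exactly when $f^{-1}\in O_f$, which is the same as $G_{f,f^{-1}}\neq\emptyset$.

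First I would use that isomorphic comodules have equal characters, so that $\chi(\tilde{V})=\chi(\tilde{V}^*)$. By Proposition \ref{prop:irreduciblecharacter}, every summand of $\chi(\tilde{V})$ is a scalar multiple of an element $p_{z^{-1}gz}\#(z^{-1}\triangleright f)$ whose $\k F$-component $z^{-1}\triangleright f$ lies in $O_f$; hence $\chi(\tilde{V})\in C_f$. Moreover $\chi(\tilde{V})\neq 0$, since $\varepsilon(\chi(\tilde{V}))=\dim_\k\tilde{V}\geq 1$ (the character is the trace of a basic multiplicative matrix, whose diagonal entries have counit $1$). On the other hand, by Proposition \ref{prop:dual} the dual $\tilde{V}^*$ is the comodule induced from the orbit $O_{f^{-1}}$, and the explicit formula there exhibits $\chi(\tilde{V}^*)$ as supported on the $\k F$-components $(z\triangleleft f)^{-1}\triangleright f^{-1}\in O_{f^{-1}}$; thus $\chi(\tilde{V}^*)\in C_{f^{-1}}$.

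Combining these observations gives $0\neq\chi(\tilde{V})=\chi(\tilde{V}^*)\in C_f\cap C_{f^{-1}}$. Since $\{p_g\#x\mid g\in G,\ x\in F\}$ is a basis of $H$ and $C_f$ (respectively $C_{f^{-1}}$) is spanned precisely by those basis vectors whose $\k F$-component lies in $O_f$ (respectively $O_{f^{-1}}$), the intersection $C_f\cap C_{f^{-1}}$ is spanned by the basis vectors with $\k F$-component in $O_f\cap O_{f^{-1}}$. Its being nonzero therefore forces $O_f\cap O_{f^{-1}}\neq\emptyset$; as distinct $G$-orbits are disjoint, this yields $O_f=O_{f^{-1}}$, and in particular $f^{-1}\in O_f$. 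Finally, $f^{-1}\in O_f$ means $f^{-1}=g\triangleright f$ for some $g\in G$, which by definition says $g\in G_{f,f^{-1}}$; hence $G_{f,f^{-1}}\neq\emptyset$, matching the equivalence $(1)$ of Lemma \ref{lem:Gff-1}.

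The argument is essentially bookkeeping, and I expect no serious obstacle: the two points needing care are verifying that $\chi(\tilde{V})$ is nonzero and genuinely lies in the block $C_f$, and that passing to the dual (via Proposition \ref{prop:dual}, or equivalently via the antipode $S$) carries the character into the block $C_{f^{-1}}$ of the inverse orbit. Both are immediate from the explicit character formulas, which already display the relevant $\k F$-components.
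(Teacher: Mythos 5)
Your proof is correct and follows essentially the same route as the paper: both equate $\chi(\tilde{V})$ with $\chi(\tilde{V}^*)$, deduce $O_f=O_{f^{-1}}$ from the supports of the two characters, and then read off an element of $G_{f,f^{-1}}$. The only difference is that you spell out the step the paper leaves implicit (nonvanishing of the character and the block decomposition $H=\bigoplus_{f\in F_G}C_f$ forcing the orbits to coincide), which is a welcome addition but not a different argument.
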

\begin{proof}
Because of the fact that $$\tilde{V}^*\cong \tilde{V}$$ if and only if $$\chi(\tilde{V}^*)=\chi(\tilde{V}),$$
we can show that $$O_{f^{-1}}=O_f.$$
Thus there exist some $g, g^{\prime}\in G$ such that $$g\triangleright f^{-1}=g^\prime\triangleright f,$$
which follows that $$(g^{-1}g^\prime)\triangleright f=f^{-1}.$$
By Lemma \ref{lem:Gff-1}, we have $$G_{f, f^{-1}}=\{g\in G\mid g\triangleright f=f^{-1}\}\neq \emptyset.$$
\end{proof}

\begin{remark}\label{prop:indicator}\rm
The classical Frobenius-Schur theorem for finite groups was generalized to finite-dimensional semisimple Hopf algebras by Linchenko-Montgomery \cite{LM00}. Bichon proved such a theorem for cosemisimple Hopf algebras with involutive antipode \cite{Bic02}.
Let $H=\k^G{}^\tau\#_{\sigma}\k F$ be the Hopf algebra as in Proposition \ref{prop:Hopf}, $T$ be the left integral defined in the proof of Proposition \ref{prop:cosemisimple} and $V$ be any simple right $H$-comodule.
Following \cite{Bic02}, we define the dual version of Frobenius-Schur indicator of $\chi(V)$ to be
$$\nu_2(\chi(V))=\langle T, m_H\Delta_H(\chi(V))\rangle.$$
Combining Corollary \ref{coro:S2=1} and \cite[Theorem 6]{Bic02}, we have $\nu_2(\chi(V))=0, 1$ or $-1$. Moreover,
$\nu_2(\chi(V))\neq 0$ if and only if $V\cong V^*$. The case $\nu_2(\chi(V))=1$ corresponds to the existence of a $H$-colinear symmetric nondegenerate bilinear form on $V$, while the case $\nu_2(\chi(V))=-1$ corresponds to the existence of a $H$-colinear skew-symmetric nondegenerate bilinear form on $V$. For example, let $H(\mathbb{Z}, \mathbb{Z}_2)$ be the Hopf algebra as in Example \ref{example:H(e,f)} and $V$ be any simple right $H(\mathbb{Z}, \mathbb{Z}_2)$-comodule. A simple computation shows that $\nu_2(\chi(V))=1.$ In the future, we will study the Frobenius-Schur theorem for any cosemisimple Hopf algebra.
\end{remark}

\section{Further results about smash products}\label{section6}
In this section, let $H=\k^G{}^\tau\#_{\sigma}\k F$ be the Hopf algebra as in Proposition \ref{prop:Hopf} with $\sigma(f, f^\prime)=1$ and $\tau(g)=1\otimes 1$ for any $f, f^\prime\in F$ and $g\in G$. That is, $H=\k^G\#\k F.$ For convenience, we will use the notations in Subsection \ref{subsection2.3} and fix our notations as in Lemma \ref{lem:inducedcomodule}.

In the following part, suppose $G$ is an abelian group. In such a case, $\k^G$ is a pointed Hopf algebra, which means that any simple right $\k^G$-comodule is $1$-dimensional.

As stated in the previous section, let us fix an element $f\in F$. Suppose $V=\span\{v\}$ is a simple right $\k^{G_f}$-comodule. We have
$$\rho(v)=v\otimes (\sum_{g\in G_f} a^g p_g),$$
where $a^{g}\in \k$ for any $g\in G_f.$ Besides, we know that $\sum_{g\in G_f} a^g p_g$ is a group-like element in $\k^{G_f}$, which follows that
$$
a^{1_{G}}=1,
$$
and
$$
a^{yx}=a^ya^x.
$$

In this case, Lemma \ref{lem:coefficientcoalg} and Proposition \ref{prop:irreduciblecharacter} have a simpler form.
\begin{lemma}\label{coro:smashsimple}
With the notations above, suppose $G$ is an abelian group. Let $V=\span\{v\}$ be a simple right $\k^{G_f}$-comodule and $\tilde{V}$ be the induced right $\k^G\#\k F$-comodule. Suppose that $$\rho(v)=v\otimes (\sum_{g\in G_f} a^g p_g),$$
where $a^{g}\in \k$ for any $g\in G_f.$ Then the coefficient coalgebra of $\tilde{V}$ is the simple subcoalgebra spanned by
$$
B=\{\sum_{g\in G_f}  a^{g}p_{(z^\prime)^{-1}gz}\#(z^{-1}\triangleright f)\mid  z, z^\prime\in T_f\}.
$$
Moreover, the character $\chi(\tilde{V})$ is
 $$
 \sum\limits_{z\in T_f}\sum\limits_{g\in G_f}a^{g}p_{g}\#(z^{-1}\triangleright f).
 $$
\end{lemma}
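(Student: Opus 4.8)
The plan is to obtain both assertions by specializing Lemma~\ref{lem:coefficientcoalg} and Proposition~\ref{prop:irreduciblecharacter}, rather than re-running the induced-comodule computation of Lemma~\ref{lem:inducedcomodule}. First I would record the two simplifications forced by the standing hypotheses $\sigma\equiv 1$ and $\tau\equiv 1\otimes 1$. Since $\tau(g)=1\otimes 1$ for every $g\in G$, we have $\tau(g,g';f)=1$ for all $g,g'\in G$ and $f\in F$, so each factor $\tau(\,\cdot\,;\,\cdot\,)^{\pm 1}$ occurring in the general formulas equals $1$; moreover the revised comultiplication defining $\k_{\tau_f}^{G_f}$ reduces to the ordinary comultiplication of $\k^{G_f}$, whence a right $\k_{\tau_f}^{G_f}$-comodule is simply a right $\k^{G_f}$-comodule.

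Next, because $G$ is abelian its subgroup $G_f$ is abelian and $\k^{G_f}$ is pointed, so the simple comodule $V$ is one-dimensional; I would set $m=1$ and write $a^g:=a_{11}^g$. The element $\sum_{g\in G_f}a^g p_g$ is exactly the group-like encoding $\rho(v)$, which gives the normalizations $a^{1_G}=1$ and $a^{yx}=a^y a^x$ stated before the lemma. Putting $m=1$ and deleting every $\tau$-factor in the description of $B$ from Lemma~\ref{lem:coefficientcoalg} yields precisely
$$B=\{\sum_{g\in G_f} a^{g}p_{(z^\prime)^{-1}gz}\#(z^{-1}\triangleright f)\mid z,z^\prime\in T_f\},$$
and the linear independence of $B$ together with the simplicity of $\operatorname{cf}(\tilde V)$ are inherited directly from that lemma.

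For the character I would begin from Proposition~\ref{prop:irreduciblecharacter}, again taking $m=1$ and dropping the $\tau$-factors, which leaves $\sum_{z\in T_f}\sum_{g\in G_f} a^{g}\,p_{z^{-1}gz}\#(z^{-1}\triangleright f)$. The only place where commutativity of $G$ is used in an essential way --- and the single step that is not a mechanical erasure of symbols --- is the identity $z^{-1}gz=g$; this replaces $p_{z^{-1}gz}$ by $p_g$ and produces the claimed expression $\sum_{z\in T_f}\sum_{g\in G_f} a^{g}p_{g}\#(z^{-1}\triangleright f)$. I do not anticipate any genuine obstacle: the substantive content already resides in the general results, so the proof consists in checking how the hypotheses trivialize each ingredient, with the conjugation collapse $z^{-1}gz=g$ being the one ingredient that requires $G$ abelian rather than merely $\sigma,\tau$ trivial.
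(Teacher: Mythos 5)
Your proposal is correct and matches the paper's intent exactly: the paper offers no separate argument for this lemma, presenting it precisely as the specialization of Lemma \ref{lem:coefficientcoalg} and Proposition \ref{prop:irreduciblecharacter} obtained by noting that trivial $\tau$ erases all cocycle factors and identifies $\k_{\tau_f}^{G_f}$ with $\k^{G_f}$, that $G$ abelian forces $m=1$, and that $z^{-1}gz=g$ collapses $p_{z^{-1}gz}$ to $p_g$ in the character. Nothing further is needed.
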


Now we can determine $\tilde{V}^*$ under some assumption.
\begin{proposition}\label{prop:smashdual}
With the notations above, let $G$ be an abelian group. Suppose that the action of $F$ on $G$ is trivial, that is, $g\triangleleft f=g$ for any $f\in F, g\in G$. Let $V=\span\{v\}$ be a simple right $\k^{G_f}$-comodule and $\tilde{V}$ be the induced right $\k^G\#\k F$-comodule. Let $H^\prime= \k^{G_{f^{-1}}}\#\k F$. Then we have $$\tilde{V}^*\cong (V^*\otimes \k f^{-1})\Box_{H^\prime} (\k^G\#\k F).$$
Moreover, $\tilde{V}\cong \tilde{V}^*$ if and only if $V\cong V^*$ and the set $G_{f, f^{-1}}=\{g\in G\mid g\triangleright f=f^{-1}\}\neq \emptyset$.
\end{proposition}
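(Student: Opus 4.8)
The plan is to prove both assertions by comparing characters, using that over a cosemisimple Hopf algebra two simple comodules are isomorphic precisely when their characters coincide (Lemma \ref{lem:char} and the isomorphism criterion stated just after it). Since $\sigma$, $\tau$ and the action $\triangleleft$ are all trivial here, the antipode of $H=\k^G\#\k F$ reduces to $S(p_g\#y)=p_{g^{-1}}\#(g\triangleright y)^{-1}$, the action $\triangleright$ is by automorphisms of $F$, and Lemma \ref{coro:smashsimple} gives $\chi(\tilde V)=\sum_{z\in T_f}\sum_{g\in G_f}a^g\,p_g\#(z^{-1}\triangleright f)$, where $\rho(v)=v\otimes\sum_{g\in G_f}a^g p_g$. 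I would first record the two structural facts forced by triviality of $\triangleleft$, namely $G_{f^{-1}}=G_f$ and $T_{f^{-1}}=T_f$ (both already appear inside the proof of Proposition \ref{prop:dual}).

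For part (1), I would compute $\chi(\tilde V^*)=S(\chi(\tilde V))$ term by term. The reduced antipode gives $\chi(\tilde V^*)=\sum_{z,g}a^g\,p_{g^{-1}}\#\big((gz^{-1})\triangleright f^{-1}\big)$, and this is the single place where commutativity of $G$ is essential: $gz^{-1}=z^{-1}g$ together with $g\in G_f=G_{f^{-1}}$ collapses the $\k F$-slot to $(gz^{-1})\triangleright f^{-1}=z^{-1}\triangleright(g\triangleright f^{-1})=z^{-1}\triangleright f^{-1}$. After the substitution $g\mapsto g^{-1}$ this reads $\chi(\tilde V^*)=\sum_{z\in T_f}\sum_{g\in G_f}a^{g^{-1}}\,p_g\#(z^{-1}\triangleright f^{-1})$. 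On the other hand the dual of the one-dimensional $\k^{G_f}$-comodule $V$ is the comodule $V^*$ with $\rho(v^*)=v^*\otimes S\big(\sum_g a^g p_g\big)=v^*\otimes\sum_g a^{g^{-1}}p_g$, which, since $G_{f^{-1}}=G_f$, is a simple right $\k^{G_{f^{-1}}}$-comodule. Feeding $V^*$ and the base point $f^{-1}$ into Lemma \ref{coro:smashsimple} yields exactly the same expression for $\chi\big((V^*\otimes\k f^{-1})\Box_{H'}H\big)$. As both comodules are simple by Theorem \ref{thm:simple} and have equal characters, they are isomorphic, proving (1); alternatively (1) is the specialization of Proposition \ref{prop:dual}, the computation above being the identification of the abstract comodule there with $V^*$.

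For part (2), I would compare $\chi(\tilde V)$ and $\chi(\tilde V^*)$ in the basis $\{p_g\#w\}$. The $\k F$-components of $\chi(\tilde V)$ are $\{z^{-1}\triangleright f\mid z\in T_f\}=O_f$ and those of $\chi(\tilde V^*)$ are $\{z^{-1}\triangleright f^{-1}\mid z\in T_f\}=O_{f^{-1}}$, each orbit element occurring exactly once (the map $z\mapsto z^{-1}\triangleright f$ being a bijection $T_f\to O_f$). If $G_{f,f^{-1}}=\emptyset$ then $f^{-1}\notin O_f$, so $O_f\cap O_{f^{-1}}=\emptyset$ and the two nonzero characters have disjoint support, whence $\tilde V\not\cong\tilde V^*$; thus an isomorphism forces $G_{f,f^{-1}}\neq\emptyset$, equivalently $O_f=O_{f^{-1}}$ by Lemma \ref{lem:Gff-1}. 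Assuming this, for each $w\in O_f=O_{f^{-1}}$ the coefficient of $p_g\#w$ is $a^g$ in $\chi(\tilde V)$ and $a^{g^{-1}}$ in $\chi(\tilde V^*)$, so by linear independence the characters agree if and only if $a^g=a^{g^{-1}}$ for all $g\in G_f$, which is exactly the condition $\chi_V=\chi_{V^*}$, i.e. $V\cong V^*$. Combining the two directions yields the stated equivalence.

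The anticipated difficulty is bookkeeping rather than conceptual: keeping the left/right coset conventions consistent so that $z\mapsto z^{-1}\triangleright f$ is the correct bijection $T_f\to O_f$, and invoking commutativity of $G$ at precisely the step where the $\k F$-slot of $\chi(\tilde V^*)$ must be reduced to $z^{-1}\triangleright f^{-1}$. Everything else is a direct consequence of the character formula in Lemma \ref{coro:smashsimple} and the classification of simple comodules in Theorem \ref{thm:simple}.
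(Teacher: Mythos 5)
Your proposal is correct and follows essentially the same route as the paper: compute $\chi(\tilde V^*)=S(\chi(\tilde V))$ from the character formula of Lemma \ref{coro:smashsimple}, match it against the character of $(V^*\otimes\k f^{-1})\Box_{H'}H$ using $G_f=G_{f^{-1}}$ and $T_f=T_{f^{-1}}$, and then read off the self-duality criterion as $a^g=a^{g^{-1}}$ together with $O_f=O_{f^{-1}}$ (the latter converted via Lemma \ref{lem:Gff-1}). Your version is in fact slightly more careful than the paper's at the step where commutativity of $G$ and $g\in G_f=G_{f^{-1}}$ are used to collapse the $\k F$-slot to $z^{-1}\triangleright f^{-1}$.
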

\begin{proof}
Suppose
$$\rho(v)=v\otimes (\sum_{g\in G_f} a^g p_g),$$
where $a^{g}\in \k$ for any $g\in G_f.$
It follows from Proposition \ref{prop:irreduciblecharacter} that $$
\chi(\tilde{V})= \sum\limits_{z\in T_f}\sum\limits_{g\in G_f}a^{g}p_{g}\#(z^{-1}\triangleright f).
 $$
We know that
\begin{eqnarray*}
\chi(\tilde{V}^*)
&=&S(\sum\limits_{z\in T_f}\sum\limits_{g\in G_f}a^{g}p_{g}\#(z^{-1}\triangleright f))\\
&=&\sum\limits_{z\in T_f}\sum\limits_{g\in G_f}a^{g} p_{g^{-1}}\# (z^{-1}\triangleright f)^{-1}.
\end{eqnarray*}
Meanwhile, let $V^*=\span\{v^*\}$ be the dual space of $V$. It is clear that $V^*$ is a simple right $\k^G$-comodule with comodule structure
$$
\rho(v^*)=v^*\otimes (\sum_{g\in G_f} a^g p_{g^{-1}}).
$$
An argument similar to the one used in the proof of Proposition \ref{prop:dual} shows that $$G_f=G_{f^{-1}},$$ and $$T_f=T_{f^{-1}}.$$
We find that
\begin{eqnarray*}
\chi((V^*\otimes \k f^{-1})\Box_{H^\prime} (\k^G\#\k F))&=&\sum\limits_{z\in T_{f}}\sum\limits_{g\in G_{f}}a^{g}p_{g^{-1}}\#(z^{-1}\triangleright f^{-1})\\
&=&\chi(\tilde{V}^*).
\end{eqnarray*}
Thus we have  $$\tilde{V}^*\cong (V^*\otimes \k f^{-1})\Box_{H^\prime} (\k^G\#\k F).$$
Moreover, we know that
$$
\chi(\tilde{V})=\chi(\tilde{V}^*)
$$
is equivalent to $$a^{g^{-1}}=a^g,\;\;O_f=O_{f^{-1}}.$$
It is straightforward to show that $$a^{g^{-1}}=a^g$$ if and only if $$V\cong V^*.$$
Besides, it follows from Lemma \ref{lem:Gff-1} that $$O_f=O_{f^{-1}}$$ if and only if $$G_{f, f^{-1}}=\{g\in G\mid g\triangleright f=f^{-1}\}\neq \emptyset.$$
\end{proof}

Suppose that $V=\{v\}$ is a simple right $\k^G$-comodule with comodule structure  $$\rho(v)=\sum_{g\in G} v_g\otimes p_g.$$
For any $f\in F$, we know that $V$ becomes a simple right $\k^{G_f}$-comodule via $$\overline{\rho}(v)=\sum_{g\in G_f}v_g\otimes p_g.$$ Denote the right $\k^{G_f}$-comodule $(V, \overline{\rho})$ by $\overline{V}_f.$
\begin{proposition}\label{prop:multiplication}
With the notations above, let $G$ be an abelian group. Suppose that the action of $F$ on $G$ is trivial, that is, $g\triangleleft f=g$ for any $f\in F, g\in G$. For any $1_F\neq f, f^\prime\in F$, suppose $G_f= G_{f^\prime}$. For any $x, y \in F$, suppose $O_xO_{y}=\{f_1f_2\mid f_1\in O_x, f_2\in O_{y}\}=\cup_{e\in O_{x, y}} O_{e}$ and the union is disjoint. Let $V$ be any simple right $\k^{G_x}$-comodule and $W$ be any simple right $\k^{G_{y}}$-comodule. Suppose $\tilde{V}$ and $\tilde{W}$ are the induced right $\k^G\#\k F$-comodules of $V$ and $W$, respectively.
\begin{itemize}
  \item [(1)]If $x=y=1_F$ and $V\otimes W\cong U$ as right $\k^{G}$-comodules, then $$\tilde{V}\otimes \tilde{W}\cong (U\otimes \k 1_F)\Box_{\k^{G}\#\k F}(\k^G\#\k F).$$
  \item [(2)]If $x=1_F$, $y\neq 1_F$ and $\overline{V}_y\otimes W\cong U$ as right $\k^{G_y}$-comodules, then $$\tilde{V}\otimes \tilde{W}\cong (U\otimes \k y)\Box_{\k^{G_{y}}\#\k F}(\k^G\#\k F).$$
  \item [(3)]If $x\neq1_F$, $y= 1_F$ and $V\otimes \overline{W}_x\cong U$ as right $\k^{G_x}$-comodules, then $$\tilde{V}\otimes \tilde{W}\cong (U\otimes \k x)\Box_{\k^{G_{x}}\#\k F}(\k^G\#\k F).$$
  \item [(4)]If $x, y \neq 1_F$, $1_F \notin O_{x, y}$ and $V\otimes W\cong U$ as right $\k^{G_x}$-comodules. Then $$\tilde{V}\otimes \tilde{W}\cong \bigoplus_{e\in O_{x, y}}(U\otimes \k e)\Box_{\k^{G_{x}}\#\k F}(\k^G\#\k F).$$
  \item [(5)]If $x, y \neq 1_F$, $1_F \in O_{x, y}$ and $V\otimes W\cong U$ as right $\k^{G_x}$-comodules. Suppose for $u\in U$, we have $$\rho(u)=u\otimes (\sum_{g\in G_f} c^g p_g),$$ where $c^g\in\k$ for any $g\in G_f.$ Then there exits a family $\{U^{(i)}\mid 1\leq i\leq \mid O_x\mid\}$ of simple right $\k^{G}$-comodules such that $\overline{U^{(i)}}_x=U$ and $$\sum_{i=1}^{\mid O_x\mid}\chi((U_i\otimes \k 1_F)\Box_{\k^{G}\#\k F}(\k^G\#\k F))=\mid O_x\mid \sum\limits_{g\in G_f}c^{g}p_{g}\#1_F.$$
      Moreover, $\tilde{V}\otimes \tilde{W}$ is isomorphic to
      \begin{eqnarray*}
      &&(\bigoplus_{ i=1}^{\mid O_x\mid}(U_i\otimes \k 1_F)\Box_{\k^{G}\#\k F}(\k^G\#\k F)) \oplus  (\bigoplus_{ e\in O_{x, y}\backslash \{1_F\}}(U\otimes \k e)\Box_{\k^{G_{x}}\#\k F}(\k^G\#\k F)).
      \end{eqnarray*}
\end{itemize}
\end{proposition}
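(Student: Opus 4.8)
The plan is to prove all five assertions uniformly through character theory. Since $\k^G\#\k F$ is cosemisimple (Proposition \ref{prop:cosemisimple}), the comodule $\tilde V\otimes\tilde W$ is semisimple, so by Lemma \ref{lemma:char=ring} it is determined up to isomorphism by its character $\chi(\tilde V)\chi(\tilde W)$. Thus the whole problem reduces to computing this product and recognizing it as a sum of the irreducible characters attached to the claimed summands. First I would record the simplified character formula in the present setting: because $\sigma,\tau$ are trivial and $g\triangleleft f=g$, Lemma \ref{coro:smashsimple} gives $\chi(\tilde V)=\sum_{f_1\in O_x}\sum_{g\in G_x}a^gp_g\#f_1$ and likewise $\chi(\tilde W)=\sum_{f_2\in O_y}\sum_{h\in G_y}b^hp_h\#f_2$. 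The multiplication (\ref{eq:multi}) collapses to $(p_g\#f_1)(p_h\#f_2)=\delta_{g,h}\,p_g\#f_1f_2$, whence
\[
\chi(\tilde V)\chi(\tilde W)=\sum_{g\in G_x\cap G_y}a^gb^g\sum_{f_1\in O_x,\,f_2\in O_y}p_g\#f_1f_2.
\]
By Lemma \ref{lem:multiunion} I would then rewrite $O_xO_y=\bigsqcup_{e\in O_{x,y}}O_e$ and reorganize the inner double sum according to the orbit $O_e$ in which each product $f_1f_2$ lies.

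For cases (1)--(4) this reorganization closes the argument almost directly. In (1)--(3) the product $O_xO_y$ is a single orbit containing the distinguished element ($1_F$, $y$, or $x$), and the relevant $U$ is the one-dimensional comodule with group-like $\sum a^gb^gp_g$ coming from $V\otimes W$ (after restricting one factor to the smaller stabilizer, where needed). Feeding this group-like into Lemma \ref{coro:smashsimple} and comparing with the display reproduces $\chi(\tilde U)$; the only thing to check is that $G_e=G_x$ for every occurring orbit $e\neq 1_F$, which is exactly the common-stabilizer hypothesis. Case (4) is the same computation: since $1_F\notin O_{x,y}$, each orbit $O_e$ in $O_xO_y$ has $G_e=G_x=G_y$, so $\chi(\tilde U_e)=\sum_{w\in O_e}\sum_{g\in G_x}c^gp_g\#w$ with $c^g=a^gb^g$, and summing over $e\in O_{x,y}$ recovers the product character once each $w\in O_xO_y$ is seen to be hit with the correct multiplicity.

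Case (5) is the delicate one, and is where I expect the real work. Here I would split $\chi(\tilde V)\chi(\tilde W)$ into the part supported on $1_F$ and the part supported on $O_xO_y\setminus\{1_F\}$; the latter is handled exactly as in case (4). For the $1_F$-part the key point is that the induced comodules over the orbit $\{1_F\}$ are comodules over the \emph{full} $\k^G$ (since $G_{1_F}=G$), so their group-likes are honest characters of $G$. I would take $U^{(1)},\dots,U^{(|O_x|)}$ to be the $[G:G_x]=|O_x|$ distinct characters of $G$ restricting to the group-like $c$ of $U$ on $G_x$, and compute $\sum_i\chi(\tilde U^{(i)})=\sum_{g\in G}\big(\sum_i d_i^g\big)p_g\#1_F$. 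By orthogonality of the characters of $G/G_x$ the inner sum equals $|O_x|\,c^g$ for $g\in G_x$ and $0$ otherwise, which yields the stated intermediate identity $\sum_i\chi(\tilde U^{(i)})=|O_x|\sum_{g\in G_x}c^gp_g\#1_F$.

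The main obstacle, and the crux of case (5), will be the exact bookkeeping of multiplicities. On the one hand I must show that $1_F$ occurs in $\chi(\tilde V)\chi(\tilde W)$ with multiplicity precisely $|O_x|$: this I would obtain from the fact that $1_F\in O_{x,y}$ forces $O_y=O_{x^{-1}}$ — using that $\triangleright$ acts by automorphisms, so by Lemma \ref{lem:actioninverse} one has $(g\triangleright f)^{-1}=g\triangleright f^{-1}$ — so that each $f_1\in O_x$ yields the unique factorization $f_1\cdot f_1^{-1}=1_F$ with $f_1^{-1}\in O_y$, giving exactly $|O_x|$ such factorizations and no more. On the other hand one must simultaneously control the multiplicities of the non-identity orbits so that they match the single copies of $(U\otimes\k e)\Box_{\k^{G_x}\#\k F}(\k^G\#\k F)$. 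Once every multiplicity is pinned down and the two supports are reassembled, Lemma \ref{lemma:char=ring} converts the resulting character identity into the asserted comodule isomorphism, completing all five cases.
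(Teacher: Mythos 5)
Your proposal follows the same route as the paper's proof: reduce to a character identity via Lemma \ref{lemma:char=ring}, compute $\chi(\tilde V)\chi(\tilde W)$ from Lemma \ref{coro:smashsimple} and the multiplication (\ref{eq:multi}), reorganize the support by orbits via Lemma \ref{lem:multiunion}, and in case (5) isolate the $1_F$-part by noting that $f_1f_2=1_F$ forces $f_2=f_1^{-1}\in O_y=O_{x^{-1}}$, so $1_F$ occurs with multiplicity exactly $|O_x|$ (the paper's count of the pairs $r=s$ in $T_x$). Your explicit realization of the $U^{(i)}$ as the $[G:G_x]$ character extensions of $c$ from $G_x$ to $G$, checked by orthogonality on $G/G_x$, is a slightly more concrete rendering of the step the paper delegates to the abstract decomposition argument of Proposition \ref{prop:tensorproduct}, but the argument is otherwise the same.
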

\begin{proof}
We shall adopt the same procedure as in the proof of Proposition \ref{prop:tensorproduct} and only prove $(4)$ and $(5)$. The proofs of $(1)$-$(3)$ is similar to that of $(4)$ and $(5)$.
\begin{itemize}
\item [(4)]For any $v\in V$ and $w\in W$, suppose $$\rho(v)= v\otimes (\sum_{g\in G} a^g p_g)$$ and
$$\rho(w)=w\otimes (\sum_{g\in G} b^g p_g).$$
It follows that $$\rho(v\otimes w)=v\otimes w \otimes (\sum_{g\in G}a^gb^g p_g).$$
According to Lemma \ref{coro:smashsimple}, one can get $$\chi(\tilde{V})=\sum\limits_{r\in T_x}\sum\limits_{g\in G_x}a^{g}p_{g}\#(r^{-1}\triangleright x)$$ and $$\chi(\tilde{W})=\sum\limits_{s\in T_x}\sum\limits_{g\in G_{x}}b^{g}p_{g}\#(s^{-1}\triangleright y).$$
A simple computation shows that
\begin{eqnarray*}
&&\chi(\tilde{V})\chi(\tilde{W})\\
&=&\sum\limits_{r, s\in T_x}\sum\limits_{g, h\in G_x} a^{g}b^{h}(p_{g}\#(r^{-1}\triangleright x))(p_{h}\#(s^{-1}\triangleright {y}))\\
&=&\sum\limits_{r, s\in T_x}\sum\limits_{g, h\in G_x}a^{g}b^{h}\delta_{g, h}p_{g} \#(r^{-1}\triangleright x)(s^{-1}\triangleright y)\\
&=&\sum\limits_{g\in G_x}a^{g}b^{g}p_{g} \#(\sum\limits_{r, s\in T_{x}}(r^{-1}\triangleright x)(s^{-1}\triangleright y)).
\end{eqnarray*}
Meanwhile, we know that
\begin{eqnarray*}
&&\sum_{e \in O_{x, y}}\chi((U\otimes \k e)\Box_{\k^{G_{x}}\#\k F}(\k^G\#\k F))\\
&=&\sum\limits_{g\in G_x}a^{g}b^{g}p_{g}\#(\sum_{e\in O_{x, y}}\sum\limits_{z\in T_x}(z^{-1}\triangleright e)).
\end{eqnarray*}
One can show that both $$\sum\limits_{r, s\in T_{x}}(r^{-1}\triangleright x)(s^{-1}\triangleright y)$$
and $$\sum_{e\in O_{x, y}}\sum\limits_{z\in T_{x}}(z^{-1}\triangleright e)$$ equal to
$$\sum_{t\in O_fO_{f^\prime}} t.$$
Thus
$$\tilde{V}\otimes \tilde{W}\cong \bigoplus_{e\in O_{x, y}}(U\otimes \k e)\Box_{\k^{G_{x}}\#\k F}(\k^G\#\k F).$$
  \item [(5)]With the notations in the proof of $(4)$, if $$(r_0^{-1}\triangleright x)(s_0^{-1}\triangleright y)=1_F,$$ for some $r_0, s_0\in T_f.$
  Then according to Lemma \ref{lem:actioninverse}, we have \begin{eqnarray*}
  r_0^{-1}\triangleright x&=&(s_0^{-1}\triangleright y)^{-1}\\
  &=& s_0^{-1}\triangleright y^{-1}.
  \end{eqnarray*}
  Without loss of generality, suppose that $x=y^{-1}.$
  It follows that
  \begin{eqnarray*}
&&\chi(\tilde{V})\chi(\tilde{W})\\
&=&\sum\limits_{g\in G_x}a^{g}b^{g}p_{g} \#(\sum\limits_{r, s\in T_{x}, r\neq s }(r^{-1}\triangleright x)(s^{-1}\triangleright y))+\mid O_x\mid(\sum\limits_{g\in G_x}a^{g}b^{g}p_{g}\#1_F).
\end{eqnarray*}
Using the same argument as in the proof of Proposition \ref{prop:tensorproduct}, we can easily carry out the proof of this proposition.
\end{itemize}
\end{proof}

Finally, let us generalize Example \ref{example:H(e,f)} and give some new examples.
\begin{example}\rm
Let $G=\mathbb{Z}_{2n}=\{g\mid g^{2n}=1\}$ for some $n\geq 1$. Define group actions $\mathbb{Z}_{2n}\xleftarrow{\triangleleft}\mathbb{Z}_{2n}\times \mathbb{Z} \xrightarrow{\triangleright}\mathbb{Z}$ on the sets by
$$
g^{i}\triangleright j=(-1)^i j,\;\; g^{i}\triangleleft j=g^{i},
$$
for any $1\leq i\leq 2n$, $j\in \mathbb{Z}$. Clearly, $(\mathbb{Z}, \mathbb{Z}_{2n})$ together with group actions $\mathbb{Z}_{2n}\xleftarrow{\triangleleft}\mathbb{Z}_{2n}\times \mathbb{Z} \xrightarrow{\triangleright}\mathbb{Z}$ on the sets is a matched pair. Consider the case when $\sigma$ and $\tau$ are trivial, that is, $\sigma(i, j)=1$ and $\tau(x)=1\otimes 1$ for any $i, j\in \mathbb{Z}$ and $x\in \mathbb{Z}_{2n}$. In such a case, let $H(\mathbb{Z}, \mathbb{Z}_{2n})=\k^{\mathbb{Z}_2}{}^{\tau}\#_{\sigma}\k\mathbb{Z}$ be the Hopf algebra defined in Proposition \ref{prop:Hopf} with the described algebra and coalgebra structures. According to Proposition \ref{prop:cosemisimple}, we know that $H(\mathbb{Z}, \mathbb{Z}_{2n})$ is a cosemisimple Hopf algebra.

If $j=0$, it is a consequence of Lemma \ref{lem:Cf} that $C_0\cong \mathbb{Z}_{2n}$, which means that the simple subcoalgebras of $C_0$ are $1$-dimensional.

For any $j\in \mathbb{Z}_+$, we know that $$C_j=\span\{p_{g^r}\# j\mid 1\leq r \leq 2n\}\oplus \span\{p_{g^r}\# (-j)\mid 1\leq r \leq 2n\},$$ and
$$ \mathcal{C}_j=
\left(\begin{array}{ccccc}
p_1\#(-j)&p_{g^{2n-1}}\# j& p_{g^{2n-2}}\#(-j)&\cdots&p_g\#j\\
p_g\#(-j)&p_1\# j& p_{g^{2n-1}}\#(-j)&\cdots&p_{g^2}\#j\\
p_{g^2}\#(-j)&p_g\#j&p_1\#(-j)&\cdots &p_{g^3}\# j\\
\vdots&\vdots&\vdots&&\vdots\\
p_{g^{2n-1}}\# (-j)&p_{g^{2n-2}}\# j& p_{g^{2n-3}}\# (-j)&\cdots& p_1\#j
 \end{array}\right)
$$
is a multiplicative matrix of $C_j$. It follows from Lemma \ref{lem:Cf} that $C_j$ is s simple subcoalgebra of $H(\mathbb{Z}, \mathbb{Z}_{2n})$ if and only if $n=1.$
Moreover, we have $$G_j=\{g^{r}\mid r=2s\text{ for some }s\in\mathbb{Z}\}$$ and $$T_j=\{1, g\}$$ for any $j\in \mathbb{Z}_+$.
Let $q$ be a $2n$-th primitive root of unit and $w=q^2$. Note that for any $0\leq k\leq n-1$, $\sum\limits_{i=0}^{n-1} w^{ik}p_{g^{2i}}$ is a group-like element in $\k^{G_j}$. It follows that
$$
\k^{G_j}=\bigoplus\limits_{k=0}^{n-1}\k(\sum\limits_{i=0}^{n-1} w^{ik}p_{g^{2i}}).
$$

For any $j\in \mathbb{Z}_+$, $0\leq i\leq n-1$, let $$A_i=
\left(\begin{array}{cc}
p_{g^{2i}}\#(-j)& p_{g^{2i-1}}\# j\\
p_{g^{2i+1}}\#(-j)& p_{g^{2i}}\# j
\end{array}\right).
$$
For any $0\leq k\leq n-1$, denote $E_{j}^{(k)}$ by the subspace of $C_j$ spanned by the entries of $\sum\limits_{i=0}^{n-1}w^{ik}A_{i}$.
Then the set of simple subcoalgebras of $C_j$ is $\{E_{j}^{(k)}\mid 0\leq k\leq n-1\}$.
Moreover, we have $$S(E_{j}^{(k)})=E_{j}^{(n-k)},$$
and $$H(\mathbb{Z}, \mathbb{Z}_{2n})\cong \k \mathbb{Z}_{2n}\oplus(\bigoplus_{j\in \mathbb{Z}_+}\bigoplus_{k=0}^{n-1}E_{j}^{(k)}).$$

Let $$\mathcal{S}=\{\k g^i\mid 1\leq i\leq 2n\}\cup \{E_{j}^{(k)}\mid j\in \mathbb{Z}_+, 0\leq k\leq n-1\}.$$
Combining Lemmas \ref{lem:ZS}, \ref{lemma:char=ring} and Proposition \ref{prop:multiplication}, now we can characterize $\mathbb{Z}\mathcal{S}.$
For any $1\leq i, i^\prime\leq 2n$, $ j, j^\prime \in \mathbb{Z}_+$ and $0\leq k, l\leq n-1$, we have
\begin{eqnarray*}
&&\k g^i\cdot \k g^{i^\prime}=\k g^{i^\prime}\cdot \k g^i=\k g^{i+i^\prime},\\
&&\k g^i\cdot E_{j}^{(k)}= E_{j}^{(k)}\cdot \k g^i=E_{j}^{i+k},\\
&&E_{j}^k\cdot E_{j^\prime}^l=E_{j^\prime}^l\cdot E_{j}^k=\left\{
\begin{aligned}
\k g^{k+l}+\k g^{n+k+l}+E_{2j}^{(k+l)},~~~ &\text{if} ~~~  j=j^\prime; \\
E_{j+j^\prime}^{(k+l)}+E_{\mid j-j^\prime\mid}^{(k+l)}\;\;\;\;\;\;\;\;,~~~&otherwise .
\end{aligned}
\right.
\end{eqnarray*}
\end{example}

\section{Acknowledgement}
The second author was supported by National Key R$\&$D Program of China 2024YFA1013802 and NSFC 12271243. The third author was supported by NSFC 12401040.

\end{document}